\documentclass[a4paper]{amsart}
\usepackage[utf8]{inputenc}

\usepackage{bbm}
\usepackage{amsmath}
\usepackage{amsfonts}
\usepackage{amssymb}
\usepackage{amsthm}
\usepackage{amsaddr}
\usepackage{enumerate}
\usepackage{color}
\usepackage{mathrsfs}
\usepackage{stmaryrd}
\usepackage{hyperref}
\usepackage{graphicx}
\usepackage[a4paper,margin=1in,headsep=25pt,footskip=35pt]{geometry}
\usepackage{fancyhdr}

\SetSymbolFont{stmry}{bold}{U}{stmry}{m}{n}


\newtheorem{thm}{Theorem}
\newtheorem{cor}[thm]{Corollary}
\newtheorem{lem}[thm]{Lemma}
\newtheorem{prop}[thm]{Proposition}

\theoremstyle{definition}
\newtheorem{defn}[thm]{Definition}
\newtheorem*{defn*}{Definition}

\theoremstyle{remark}
\newtheorem{rem}[thm]{Remark}

\newcommand{\deq}{\mathrel{\mathop:}=}
\newcommand{\e}[1]{\mathrm{e}^{#1}}
\newcommand{\R} {\mathbb{R}}
\newcommand{\C} {\mathbb{C}}
\newcommand{\D} {\mathbb{D}}
\newcommand{\K} {\mathbb{K}}
\newcommand{\N} {\mathbb{N}}
\newcommand{\Z} {\mathbb{Z}}

\newcommand{\E} {\mathbb{E}}

\newcommand{\adj}{^{*}} 
\newcommand{\tp}{^{\intercal}}


\DeclareMathOperator{\diag}{diag}

\DeclareMathOperator{\Tr}{Tr}

\DeclareMathOperator{\supp}{supp}
\DeclareMathOperator{\spann}{span}

\DeclareMathOperator{\re}{\mathrm{Re}}
\DeclareMathOperator{\im}{\mathrm{Im}}


\newcommand{\caA}{{\mathcal A}}
\newcommand{\caB}{{\mathcal B}}
\newcommand{\caC}{{\mathcal C}}
\newcommand{\caD}{{\mathcal D}}

\newcommand{\caF}{{\mathcal F}}

\newcommand{\caI}{{\mathcal I}}
\newcommand{\caJ}{{\mathcal J}}

\newcommand{\caL}{{\mathcal L}}

\newcommand{\caN}{{\mathcal N}}
\newcommand{\caO}{{\mathcal O}}

\newcommand{\caS}{{\mathcal S}}


\newcommand{\bbK}{{\mathbb K}}

\newcommand{\bbR}{{\mathbb R}}



\newcommand{\fra}{{\mathfrak a}}
\newcommand{\frb}{{\mathfrak b}}

\newcommand{\frm}{{\mathfrak m}}

\newcommand{\frC}{{\mathfrak C}}

\newcommand{\frK}{{\mathfrak K}}

\newcommand{\frX}{{\mathfrak X}}



\newcommand{\bsa}{{\boldsymbol a}}
\newcommand{\bsb}{{\boldsymbol b}}

\newcommand{\bse}{{\boldsymbol e}}

\newcommand{\bsl}{{\boldsymbol l}}

\newcommand{\bsr}{{\boldsymbol r}}
\newcommand{\bss}{{\boldsymbol s}}

\newcommand{\bsu}{{\boldsymbol u}}
\newcommand{\bsv}{{\boldsymbol v}}
\newcommand{\bsw}{{\boldsymbol w}}
\newcommand{\bsx}{{\boldsymbol x}}
\newcommand{\bsy}{{\boldsymbol y}}
\newcommand{\bsz}{{\boldsymbol z}}

\newcommand{\wt}{\widetilde}
\newcommand{\ol}{\overline}

\newcommand{\beq}{ \begin{equation} }
	\newcommand{\eeq}{ \end{equation} }
\newcommand{\beqs}{\begin{equation*}}
	\newcommand{\eeqs}{\end{equation*}}

\newcommand{\lone}{\mathbbm{1}} 

\newcommand{\dd}{\mathrm{d}}
\newcommand{\ii}{\mathrm{i}}

\renewcommand{\P}{\mathbb{P}}

\newcommand{\bsfrm}{\boldsymbol{\frm}}

\newcommand{\AND}{\quad\text{and}\quad}

\newcommand{\llbra}{\llbracket}
\newcommand{\rrbra}{\rrbracket}

\newcommand\norm[1]{\Vert#1\Vert}
\newcommand\Norm[1]{\left\Vert#1\right\Vert}

\newcommand\prob[1]{\mathbf{P}\left[#1\right]}

\newcommand\Absv[1]{\left\vert#1\right\vert}
\newcommand\absv[1]{\vert#1\vert}

\newcommand\brkt[1]{\langle#1\rangle}
\newcommand\Brkt[1]{\left\langle#1\right\rangle}
\newcommand\bbrktt[1]{\llbra #1\rrbra}

\numberwithin{equation}{section} 
\numberwithin{thm}{section}

\newcommand{\Cb}{\color{black}}

\newcommand{\nc}{\normalcolor}


\title[Condition number of random matrices]{Wegner estimate and upper bound on the eigenvalue condition number of 
non-Hermitian random matrices}

\author{L\'{a}szl\'{o} Erd\H{o}s$^{\dagger}$}
\author{Hong Chang Ji$^{\ddagger}$}
\address{Institute of Science and Technology Austria \\ Am Campus 1, 3400 Klosterneuburg, Austria}
\email{lerdos@ist.ac.at}
\email{hongchangji@ista.ac.at}
\thanks{$^\dagger$ Partially supported by ERC Advanced Grant "RMTBeyond" No. 101020331}
\thanks{$\ddagger$ Supported by ERC Advanced Grant "RMTBeyond" No. 101020331}

\keywords{Information-plus-noise, spectral stability, eigenvector overlaps, Wegner estimate}
\subjclass[2020]{60B20,15A12,15B52}
\date{\today}
\begin{document}
	
	\pagestyle{fancy}
	\fancyhead{}
	\fancyhead[C]{Condition number of random matrices}

\maketitle

\begin{abstract}
	We consider $N\times N$ non-Hermitian random matrices of the form $X+A$, where $A$ is a general deterministic matrix
	 and $\sqrt{N}X$ consists of independent entries with zero mean, unit variance, and bounded densities. 
	 For this ensemble, we prove (i) a Wegner estimate, i.e. 
	  that the local density of eigenvalues is bounded  by $N^{1+o(1)}$ 
	 and (ii) that the expected condition number of any bulk eigenvalue is bounded by $N^{1+o(1)}$; both
	   results are optimal up to the factor $N^{o(1)}$.  The latter result complements
	   the very recent matching lower bound obtained in~\cite{Cipolloni-Erdos-Henheik-Schroder2023arXiv} and improves
	   the  $N$-dependence of the upper bounds 
	   in~\cite{Banks-Kulkarni-Mukherjee-Srivastava2021,Banks-Vargas-Kulkarni-Srivastava2020,Jain-Sah-Sawhney2021}.
	   Our main ingredient, a near-optimal lower tail estimate for the small singular values of $X+A-z$, is of independent interest.
\end{abstract}

\section{Introduction}

 \subsection{Setup}
 We consider large $N\times N$ random matrices with independent entries and without any symmetry constraint, so that they are typically not normal and have complex eigenvalues. More precisely, our random matrix is of the form $X+A$, where $A$ is a general deterministic `data' matrix and $X$ is a random `noise matrix' consisting of independent complex or real entries with continuous distributions; 
the ensemble is often called \emph{``Information-plus-Noise''} model, see e.g. \cite{Bordenave-Capitaine2016,Dozier-Silverstein2007_1,Dozier-Silverstein_2,ElKaroui2010}.
The main objects of this paper are the three fundamental spectral quantities of $X+A$ determining its stability properties, namely, eigenvalues, diagonal \emph{eigenvector overlaps} (also known as \emph{eigenvalue condition number}), and small singular values. We will typically use the scaling where both $\|A \|$ and $\|X\|$ are  bounded, independently of $N$, however our results on the singular values will be uniform in $A$, hence they also cover the especially interesting small noise regime after a trivial rescaling. 

We next explain our main motivations to study these quantities, along with their  definitions. The fundamental difficulty in studying a non-normal matrix $A$, as opposed to a Hermitian one, is the instability of its spectrum; even a tiny perturbation to $A$ may result in a very large change in its eigenvalues. To make this precise, we assume that $A$ has a simple spectrum, and write its spectral decomposition
\beq
A=\sum_{i=1}^{N}\sigma_{i}\bsr_{i}\bsl_{i}\tp,\qquad 
A\bsr_{i}=\sigma_{i}\bsr_{i},\qquad \bsl_{i}\tp A=\sigma_{i}\bsl_{i}\tp,
\eeq  
where $\sigma_i=\sigma_{i}(A)$'s are the eigenvalues of $A$ and $\{\bsl_i=\bsl_{i}(A), \bsr_i=\bsr_{i}(A):1\leq i\leq N\}$ is a bi-orthogonal family (so that $\bsl_{i}\tp\bsr_{j}=\delta_{ij}$) in $\C^{N}$ consisting of left and right eigenvectors of $A$. Under this normalisation, the \emph{diagonal eigenvector overlaps} are defined as the scale-invariant quantity 
\beq
\caO_{ii}(A)\deq \norm{\bsl_{i}(A)}^{2}\norm{\bsr_{i}(A)}^{2}.
\eeq

With this definition, we have the well-known variational identity  (see e.g.~\cite{Banks-Vargas-Kulkarni-Srivastava2020})
\beq\label{eq:ovlp_var}
\sqrt{\caO_{ii}(A)}=\lim_{t\to0}\;\sup\left\{ \Big| \frac{\sigma_{i}(A+tE)-\sigma_{i}(A)}{t}\Big|:E\in\C^{N\times N},\norm{E}=1\right\}.
\eeq
This formula shows that the diagonal overlap  quantifies the instability of $\sigma_{i}$ against the worst perturbation, therefore $\sqrt{\caO_{ii}}$ is also called the \emph{eigenvalue condition number}. However, it is practically impossible to control $\caO_{ii}(A)$ for a general deterministic $A$: One can easily construct a bi-orthogonal family with arbitrarily large $\norm{\bsl_{1}}^{2}\norm{\bsr_{1}}^{2}$, and thus construct a matrix $A$ with arbitrarily large overlap $\caO_{11}(A)$ but bounded $\sigma_{1}(A)$. 

Besides quantifying the instability $\sigma_{i}$, another interesting feature of the overlap $\caO_{ii}(A)$  is that it connects $\sigma_{i}$ to the shifted singular values of $A$. This follows 
 from another standard variational identity;
\beq\label{eq:ovlp=s/l}
\sqrt{\caO_{ii}(A)}=\lim_{z\to\sigma_{i}}\frac{\absv{\sigma_{i}(A)-z}}{\lambda_{1}(A-z)},
\eeq
where $\lambda_{1}(A-z)\leq\cdots\leq \lambda_{N}(A-z)$ stand for the singular values of $A-z$ for $z\in\C$. Note that singular values, as (square roots of) the eigenvalues of the \emph{Hermitian} matrix $(A-z)(A-z)^*$, are much more stable under perturbations. Thus~\eqref{eq:ovlp=s/l} shows that the overlap exactly quantifies the magnification of the instability of $\sigma_{i}$ in terms of $\lambda_{1}(A-z)$ for $z$ near $\sigma_{i}$.

While the overlap is an uncontrolled object in the worst case, numerical algorithms, whose computational cost or accuracy should deteriorate with $\caO_{ii}$, still perform well in practice; see, e.g.  \cite{Sankar-Spielman-Teng2006} and references therein concerning Gaussian elimination. This is often explained by a mechanism called \emph{smoothed analysis}  capitalizing on the fact that even for the worst $A$, the instability of its spectrum is typically regularized by a noise, i.e. a random perturbation, for example in \cite{Banks-Vargas-Kulkarni-Srivastava2019,Banks-Kulkarni-Mukherjee-Srivastava2021,Banks-Vargas-Kulkarni-Srivastava2020,Jain-Sah-Sawhney2021,Sankar-Spielman-Teng2006}. This motivates the investigation of three quantities $\sigma_{i}(A+X)$, $\caO_{ii}(A+X)$, and $\lambda_{k}(X+A)$ for small $k$, where $A$ is a general deterministic matrix and $X$ is a random matrix with independent entries.

Interestingly, overlaps also appear in a completely unrelated context, namely in the Dyson-type stochastic eigenvalue dynamics. Consider the matrix Brownian motion $A_{t}=A+B_{t}$ where the entries of $B_{t}\in\C^{N\times N}$ are independent standard complex Brownian motions and $A$ has a simple spectrum. The eigenvalues $\sigma_i(A_t)$ of $A_{t}$ are martingales with brackets given by
\beq\label{eq:DBM}
	\dd\brkt{\sigma_{i}(A_{t}),\ol{\sigma_{j}(A_{t})}}_{t}=\bsl_{j}(A_{t})\adj\bsl_{i}(A_{t})\bsr_{j}(A_{t})\adj\bsr_{i}(A_{t})\dd t,\qquad \brkt{\sigma_{i}(A_{t}),\sigma_{j}(A_{t})}_{t}=0;
\eeq
see \cite[Appendix A]{Bourgade-Dubach2020}. Taking $i=j$, the diagonal overlap $\caO_{ii}(A_{t})$ is exactly the time derivative of the quadratic variation of $\sigma_{i}(A_{t})$. In particular, \eqref{eq:DBM} shows that the dynamics of $\sigma_{i}(A_{t})$ involves eigenvectors, hence is not autonomous in contrast to the standard Hermitian Dyson Brownian motion \cite{Dyson1962}.
 
 Finally, we point out yet another context of our results. The noise matrix is rescaled by a factor $1/\sqrt{N}$ to keep its norm of order one as $N$ grows, i.e. the entries of $\sqrt{N}X$ remain on constant scale. With this scaling, the celebrated \emph{circular law} \cite{Bai1997,Girko1984,Tao-Vu2010} states that if the entries of $\sqrt{N}X$ are  centered i.i.d. random variables with unit variance, then $\sigma_{i}(X)$'s are asymptotically uniformly distributed on the unit disk with density of order $N$ (we consider unnormalized densities throughout the paper unless stated otherwise). Extension of the circular law  for $X+A$ was proved in \cite[Corollary 1.17]{Tao-Vu2010} as well if $A$ has a limiting $*$-distribution $a$, in which case the limit is the 
Brown measure of free sum $x+a$ where $x$ is a free circular element. Recently this Brown measure was studied in detail in \cite{Zhong2021}, where it was proved that there is an open set $\caB\subset\C$ so that this measure is supported on $\ol{\caB}$ and has a strictly positive real analytic density in $\caB$. Consequently, the number of eigenvalues $\sigma_{i}(X+A)$ in a complex domain $\caD\subset\caB$ is typically comparable to $N\absv{\caD}$ where $\absv{\caD}$ is the area of $\caD$; see Remark \ref{rem:DOS} for details. We refer to \cite[Section 1.1]{Zhong2021} for a historical exposition on the eigenvalue density of $X+A$.
    
 In the special case of the \emph{Ginibre ensemble}, i.e. when the entries of $X$ are i.i.d. Gaussian and $A=0$, the density, or one-point function, of the eigenvalues can be computed explicitly~\cite{Ginibre1965} and it is essentially given by $\rho_N(z)= \frac{N}{\pi}$ for $|z|<1$. For general entry distribution of $X$, the \emph{local circular law} \cite{Bourgade-Yau-Yin2014} (see also~\cite{Alt-Erdos-Kruger2018,Alt-Kruger2021}), asserts that $\sum_i f(\sigma_i)$ is well approximated by $\frac{N}{\pi}\int f$ as long as $f$ lives on a scale much larger than $N^{-1/2}$, the typical eigenvalue spacing. Thus the density of eigenvalues is well understood on \emph{mesoscopic} scales, but it is a highly nontrivial statement that the density remains absolutely continuous and even constant  on arbitrary small scales (in particular this obviously requires that the distribution of the matrix elements has a continuous component). Motivated by the theory of random Schr\"odinger  operators, in the Hermitian setup an upper bound on the eigenvalue density is called \emph{Wegner estimate}~\cite{Wegner1981}. Its optimal form has been established in~\cite{Erdos-Schlein-Yau2010,Maltsev-Schlein2011} for a large class of Hermitian random matrices (Wigner matrices). Prior to the current work, no Wegner-type result below the mesoscopic scale $N^{-1/2+\epsilon}$ has been known for non-Hermitian i.i.d. random matrices besides the Ginibre case and its elliptic generalizations where explicit formulas using planar orthogonal polynomials and contour integral methods are available, see the recent comprehensive paper~\cite{Akemann-Duits-Molag2022} and references therein. We finally remark that optimal upper bounds on the local $k$-point correlation functions (in particular Wegner estimates for $k=1$) for two-dimensional Coulomb gases with general potential were established in \cite{Thoma2022arXiv} using non-explicit conditioning methods.

 Eigenvalues and eigenvector overlaps of $X+A$ are genuinely \emph{non-Hermitian} objects which are hard to access 
 directly. Instead, one  typically works with the singular values and singular vectors of $X+A$, which are accessible  via 
 \emph{Hermitian} methods, however, the relation between them is subtle. On the level of the spectrum, Girko's 
 formula (see \eqref{eq:Girko} below) provides an explicit connection, but it requires to control the lower tail of the 
 small singular values of  $X+A-z$ for all $z\in \C$ simultaneously. For eigenvectors the situation is much more delicate; 
 there is no direct formula relating eigenvectors of $X+A$ and singular vectors of $X+A-z$ apart from the special case 
 when $z$ is exactly an eigenvalue, a conditioning that is technically very hard to handle. 
 Our main results on eigenvector overlaps and Wegner estimate for eigenvalues both 
 heavily rely on very accurate lower tail estimates on the lowest singular values of $X+A-z$ 
 and we develop a new method to estimate them. We remark that singular values and singular vectors of the Information-plus-Noise model $X+A$ have also been extensively studied in statistics, but with a strong focus on the large singular values. Our current main interest is the opposite regime, so we refrain from reviewing the extensive statistics literature on the subject.


\subsection{Our results} 
The standing assumption on $X$ for all our results is that the entries of $\sqrt{N}X$ have continuous distributions with bounded densities. This guarantees the main mechanism behind the key Wegner estimates. For certain results, we  may additionally assume that  the entries of $\sqrt{N}X$ are centered, have variance one and  have all moments finite. The matrix $A$ can be completely general, for some results we only assume that $A$ has bounded norm. Before formally stating  our main theorems with precise conditions in Section~\ref{sec:results} we informally summarize their content and compare them with previous results.

The first main result, given precisely in Theorem~\ref{thm:main} later, concerns an upper bound for the averaged density of the eigenvalues $\sigma_{i}$'s. Specifically, we prove that 
\beq\label{eq:thm_1_simple}
	\E \#\{ i: \sigma_{i}(X+A)\in \caD \} \lesssim \begin{cases}
			N^{o(1)}(N\absv{\caD})^{1-o(1)} & \text{ for complex }X,\\
			N^{o(1)}\dfrac{(N\absv{\caD})^{1-o(1)}}{\min_{z\in\caD}\lambda_{1}(\Im[A-z])} &\text{ for real }X,
			\end{cases}
\eeq
for any ball $\caD$ \emph{of arbitrarily small size} in the bulk spectrum (see \eqref{eq:Btau_def} for its definition), where $\Im[A-z]$ is the matrix with entrywise imaginary part of $A-z$. This result is a weak \emph{Wegner estimate}: Up to the $o(1)$-powers it almost shows the absolute continuity and boundedness of the normalized eigenvalue density. Note that in the real case the estimate necessarily deteriorates near the real axis;  this is because real random matrices tend to have many real eigenvalues (e.g. the real Ginibre matrices have approximately $\sqrt{N}$ real eigenvalues \cite{Edelman-Kostlan-Schub1994}) and once eigenvalues concentrate on a one-dimensional submanifold, their \emph{two-dimensional} density naturally has a singular component.

The second main results (Theorem \ref{thm:overlap}--\ref{thm:overlap_strong} later) concern an upper bound on the diagonal overlap $\caO_{ii}$ in expectation sense. Since in the Ginibre case it is known  $\caO_{ii}$ has a fat tail~\cite{Bourgade-Dubach2020}, hence controlling 
second or higher moments is impossible.  In Theorem \ref{thm:overlap} we prove that
\beq\label{eq:thm_2_simple}
	\E\left[\lone_{\Xi_{\caD}}\sum_{i:\sigma_{i}(X+A)\in\caD}\caO_{ii}(X+A)\right]\lesssim\begin{cases}
		\absv{\log N}^{2}\cdot N(N\absv{\caD}) &\text{ for complex }X,\\
		\absv{\log N}^{2}\cdot N\dfrac{N\absv{\caD}}{\min_{z\in\caD}\lambda_{1}(\Im[A-z])} &\text{ for real }X,
	\end{cases} 
\eeq
where $\caD\subset\C$ can be any polynomially small (in $N$) ball and $\Xi_{\caD}$ is a very high-probability event. 
\Cb  In Theorem \ref{thm:overlap_strong},  under somewhat stronger assumptions, we remove the exceptional set, i.e. the factor of $\Xi_{\caD}$ from \eqref{eq:thm_2_simple}. Theorem \ref{thm:overlap_strong} also covers overlaps for \emph{real eigenvalues} when both $X$ and $A$ are real, and shows that
\beq\label{eq:thm_2_1_simple}
	\E\left[\sum_{i:\sigma_{i}(X+A)\in\caI}\sqrt{\caO_{ii}(X+A)}\right]\lesssim N^{\delta}\cdot \sqrt{N}(\sqrt{N}\absv{\caI}),
\eeq
where $\caI\subset\R$ can be any interval in the bulk spectrum. \nc
Up to expectation and the $\log N$ (or $N^{\delta}$) factors, both of \eqref{eq:thm_2_simple} and \eqref{eq:thm_2_1_simple} roughly imply $ \caO_{ii}=O(N)$: 
The first factors $N$ and $\sqrt{N}$ indicate the sizes of $\caO_{ii}$ and $\sqrt{\caO_{ii}}$, while $(N\absv{\caD})$ and $(\sqrt{N}\absv{\caI})$ stand for the expected number of eigenvalues in the given domains. Our estimates also naturally provide an upper bound on the \emph{off-diagonal} overlaps $\caO_{ij}: =(\bsl_i, \bsl_j)(\bsr_i, \bsr_j)$ as well by a trivial Schwarz inequality, $|\caO_{ij}|\le ( \caO_{ii}\caO_{jj})^{1/2}$.

Finally,  in Theorems \ref{thm:s_weak} -- \ref{thm:s_real_weak}, 
we  prove almost optimal lower tail bounds on the singular values of $X+A$. To be precise, in Theorem \ref{thm:s_weak} we prove that, \emph{uniformly over all} $s>0$,
\beq\label{eq:thm_3_1_simple}
	\P[N\lambda_{k}(X+A)\leq s]\lesssim \begin{cases}
			s^{2k^{2}}(\absv{\log s}+\log N)^{k} &\text{ for complex $X$, {\Cb fixed $k\geq 1$}},	\\
			s^{k^{2}}(\absv{\log s}+\log N)^{k} & \text{ for real $X$, {\Cb fixed $k\geq 2$}},
	\end{cases}
\eeq
where $\lambda_{k}(X+A)$ denotes the $k$-th smallest singular value of $X+A$. In Theorem \ref{thm:s_strong} we remove the $\log s$ factor to exhibit the optimal $s$-dependence in~\eqref{eq:thm_3_1_simple} assuming more restrictive conditions on $X$, \Cb and extend the result for real $X$ to $k=1$ when $A$ is also real\nc. The exponents $2k^2$ and $k^2$ exactly express the correct strength of  \emph{level repulsion} among singular values. Finally, in Theorem \ref{thm:s_real_weak}, we prove that for \emph{real} $X$, a genuinely \emph{complex shift} $A$ can improve \eqref{eq:thm_3_1_simple} for $k=1$ in the sense that
\beq\label{eq:thm_3_2_simple}
	\P[N\lambda_{1}(X+A)\leq s]\lesssim s^{2}\left(1+\frac{\absv{\log s}+\log N}{\lambda_{1}(\Im[A])}\right)
\eeq
uniformly over all $s>0$. The second power of $s$ shows that the tail of the lowest singular value of $X+A$ follows the behavior of the \emph{complex} ensemble even though $X$ is \emph{real}. As a consequence, for a genuinely complex data matrix $A$, the same regularization effect can be achieved with a real noise matrix as with a complex one.

In the next sections we collect previously known results on $\sigma_{i}$, $\caO_{ii}$, and $\lambda_{k}$, and explain how our results~\eqref{eq:thm_1_simple}--\eqref{eq:thm_3_2_simple} improve upon them. The situation is somewhat different for general $A$ matrix and for the important special case when $A=-z$ scalar matrix for some $z\in \C$. Most  previous results have only been established for the real or complex Ginibre ensemble, i.e. when the entries of $\sqrt{N}X$ are i.i.d. real or complex standard Gaussians.
 Results valid for general i.i.d. matrices will be collected separately.

\subsection{Previous results I: $X=$ Ginibre, $A=$ scalar.}
Since adding a scalar matrix $A$ affects $\sigma_{i}(X+A)$ and $\caO_{ii}(X+A)$ merely as a shift, for these quantities we may assume $A=0$.

The density of eigenvalues $\sigma_i$ was explicitly computed for complex and real Ginibre ensembles respectively in \cite{Ginibre1965} and \cite{Edelman1997}. In particular, results therein imply that uniformly over $z\in\C$
\beq\label{eq:Gini}
	\rho_{\C}(z)\sim \lone(\absv{z}\leq1)+o(1),\qquad \rho_{\C\setminus\R}(z)\sim \lone(\absv{z}\leq 1)\min(\sqrt{N}\absv{\im z},1)+o(1),
\eeq
where $\rho_{\C}$ and $\rho_{\C\setminus\R}$ denote the normalized densities of all and non-real eigenvalues of complex and real Ginibre ensembles, respectively. Taking a domain $\caD$ away from the real axis, our result~\eqref{eq:thm_1_simple} generalizes the upper bound~\eqref{eq:Gini} to an almost boundedness of $\rho_{\C}$ and $\rho_{\C\setminus\R}$ even for general (bounded) matrix $A$ and general i.i.d. distribution for $X$. See Remark \ref{rem:e_R} for more detailed comparison in the real case. For the real eigenvalues of the real Ginibre ensemble $X$ we mention that \cite[Corollaries 4.5 and 5.2]{Edelman-Kostlan-Schub1994} proved
 \beq\label{eq:Gini_R_R}
	E_{N}\deq\absv{\{i:\sigma_{i}(X)\in\R\}}\sim \sqrt{N},\qquad \rho_{\R}(x)\deq \frac{1}{E_{N}}\frac{\dd}{\dd x}\E\absv{\{i:\sigma_{i}(X)\leq x\}}\sim \lone(x\in[-1,1])+o(1).
\eeq
While \eqref{eq:Gini_R_R} is not directly related to \eqref{eq:thm_1_simple}, it shows that the factor of $(\sqrt{N}\absv{\caI})$ in \eqref{eq:thm_2_1_simple} indeed accurately describes the number of real eigenvalues $\sigma_{i}(X+A)$ in a real interval $\caI$.

Likewise, the overlap $\caO_{ii}$ has also been studied thoroughly for Ginibre ensembles. In \cite{Starr-Walters2015} it was proved for the complex Ginibre ensemble that, uniformly over $\absv{z}<1$, 
\beq\label{eq:ovlp_true}
\E[\caO_{ii}(X)\vert \sigma_{i}=z]=N(1-\absv{z}^{2})+o(1),
\eeq
with a sub-exponential error in $N$, where the expectation is conditioned on the event that $\sigma_i=z$. Furthermore, it was recently proved in \cite{Bourgade-Dubach2020,Fyodorov2018} that $\caO_{ii}$ has a heavy tail even after proper rescaling: Uniformly over $\absv{z}<1$ and conditionally on the event $\sigma_{i}=z$,
\beq\label{eq:ovlp_fluc}
\frac{\caO_{ii}(X)}{N(1-\absv{z}^{2})}\Longrightarrow \begin{cases}
	\gamma_{2}^{-1} & \text{if $X$ is complex Ginibre ensemble \cite[Theorem 1.1]{Bourgade-Dubach2020}},\\
	\gamma_{1}^{-1} & \text{if $X$ is real Ginibre ensemble and $z\in\R$ \cite[Theorem 2.1]{Fyodorov2018}},
\end{cases}
\eeq
where $\gamma_{1}$ and $\gamma_{2}$ are gamma distributed random variables with parameters $1$ and $2$. This implies that $\caO_{ii}$ has a heavy tail without first and second moment in the real and complex case, respectively. In particular, the probabilistic $L^{1}$-control in \eqref{eq:thm_2_simple} is practically the strongest possible form of an upper bound for $\caO_{ii}$, and the same applies to \eqref{eq:thm_2_1_simple}. We also mention that for real Ginibre ensemble $X$ and $|\caD|\gtrsim 1/N$, the following nearly optimal upper bound corresponding to the second estimate of \eqref{eq:thm_2_simple} was proved in \cite[Theorem 2.5]{Cipolloni-Erdos-Schroder2022SIAM}:
\beq\label{eq:ovlp_Gini_R_C}
	\E\lone_{\Xi_{\caD}}\sum_{\sigma_{i}(X)\in\caD} \caO_{ii}(X) \lesssim (\log N)\cdot N(N\absv{\caD})
\eeq
for a high-probability event $\Xi_{\caD}$. Comparing \eqref{eq:ovlp_true}--\eqref{eq:ovlp_Gini_R_C} with \eqref{eq:thm_2_simple}--\eqref{eq:thm_2_1_simple}, we find that our results are optimal modulo \Cb the factors of $\log N$ (or $N^{\delta}$) \nc and the denominator in the second estimate of \eqref{eq:thm_2_simple}. The main novelty
 is that we are not restricted to Ginibre ensembles and we allow a general matrix $A$.

Next, we recall previous results on the singular values when $X$ is real or complex Ginibre ensemble. For singular values a scalar shift $A=-z$ does matter. For the special $A=0$ case, the following optimal result was proved in \cite{Szarek1991}:
\beq\label{eq:Szarek}
\P[N\lambda_{k}(X)\leq s]\sim \begin{cases}
	s^{2k^{2}} &\text{if $X$ is complex Ginibre},	\\
	s^{k^{2}} & \text{if $X$ is real Ginibre}.
\end{cases}
\eeq
The exponents reflect the strong level repulsion among the first $k$ singular values indicating an underlying Vandermonde determinant structure for the joint density function.

For the more general $A=-z\ne 0$ case; when $X$ is\ complex Ginibre
 one can explicitly compute $\P[\lambda_{k}(X-z)\leq s]$ for any fixed $k$, in fact the singular values form a determinantal point process \cite{BenArous-Peche2005} and thus the analogue of~\eqref{eq:Szarek} for the complex case holds. No explicit formula is available when $X$ is a real Ginibre ensemble but via supersymmetric methods \cite{Cipolloni-Erdos-Schroder2022SIAM} proved an almost optimal counterpart of \eqref{eq:thm_3_2_simple} for the lowest singular value. See Remarks \ref{rem:s_C} and \ref{rem:s_R_C} for more details on $\lambda_{k}(X+A)$ when $X$ is Ginibre and $A=-z$.

\subsection{Previous results II: $X=$ Ginibre, $A$ general}
As we already mentioned, 
Wegner-type estimates for $\sigma_{i}$'s such as \eqref{eq:thm_1_simple} have not been considered for general $A$ before.

As far as $\caO_{ii}$ and $\lambda_{k}$ 
for general $A$ are concerned, all 
upper bounds on $\caO_{ii}$
 listed below are obtained using (variants of) the inequalities
\beq\label{eq:ovlp_s}
\begin{aligned}
	\pi\E\sum_{i:\sigma_{i}\in\caD}\caO_{ii}(X+A)\leq N^{2}\liminf_{s\to0}\int_{\caD}\frac{\P[N\lambda_{1}(X+A-z)\leq s]}{s^{2}}\dd^{2} z, \\
	2\E\sum_{i:\sigma_{i}\in\caI}\sqrt{\caO_{ii}(X+A)}\leq N\liminf_{s\to0}\int_{\caI}\frac{\P[N\lambda_{1}(X+A-x)\leq s]}{s}\dd x,
\end{aligned}\eeq
proved respectively in \cite[Section 3.6]{Bourgade-Dubach2020} and \cite[Lemma 6.2]{Banks-Vargas-Kulkarni-Srivastava2020} for the condition numbers of the complex and real eigenvalues. Notice that it is essential to have a tail bound on $\lambda_1$  \emph{uniformly} for all small $s$; in particular the prominent  tail bounds on $N\lambda_1$ that
 are valid only down to some scale $s\gtrsim N^{-a}$ with some exponent $a>0$, e.g.~\cite[Section 4.4]{Bordenave-Chafai2012},
\cite[Theorem 2.1]{Tao-Vu2008},
\cite[Theorem 1.18]{Cook2018}, or bounds with a tiny additive factor (to account for the 
possible discrete distribution of $X$ which we exclude in our setup),
e.g.~\cite[Theorem 1.2]{Rudelson-Vershynin2008}
would not be useful in~\eqref{eq:ovlp_s}.

  The inequalities~\eqref{eq:ovlp_s} translate estimates such as \eqref{eq:thm_3_1_simple} and \eqref{eq:thm_3_2_simple} into upper bounds for $\caO_{ii}$, and in particular one can simply replace $A$ by $A-z$ and plug these estimates into \eqref{eq:ovlp_s} if not for the logarithmic corrections. In light of \eqref{eq:ovlp_s}, in what follows, we list previous results on $\lambda_{1}(A+X)$ together with their direct consequences for $\caO_{ii}$, 
  even if the resulting upper bounds for $\caO_{ii}$ were not formulated explicitly.

The first result concerning $\caO_{ii}(X+A)$ for general $A$ appeared in \cite{Banks-Kulkarni-Mukherjee-Srivastava2021}, where the authors proved for real and complex Ginibre ensemble that
\begin{align}
	\P[N\lambda_{1}(X+A)\leq s]&\leq s^{2},	&\E\sum_{i:\sigma_{i}(X+A)\in\caD}\caO_{ii}(X+A)\leq& \frac{N}{\pi} (N\absv{\caD}),&\text{for complex $X$},\label{eq:OA_Gini_C}\\
	\P[N\lambda_{1}(X+A)\leq s]&\leq s, 	&\E\sum_{i:\sigma_{i}(X+A)\in\caI}\sqrt{\caO_{ii}(X+A)}\leq&\frac{\sqrt{N}}{2}(\sqrt{N}\absv{\caI}),&\text{for real $X$ and $A$}\label{eq:OA_Gini_R}.
\end{align}
A weaker version of \eqref{eq:OA_Gini_R} was proved earlier in \cite{Sankar-Spielman-Teng2006} with an additional factor of 2.35. 
 Comparing these estimates with \eqref{eq:ovlp_true}--\eqref{eq:ovlp_Gini_R_C} shows that \eqref{eq:OA_Gini_C}--\eqref{eq:OA_Gini_R}
 are optimal.
 
\subsection{Previous results III: $X, A$ general}
Beyond Gaussian ensembles, \eqref{eq:OA_Gini_C} and \eqref{eq:OA_Gini_R} 
have been generalized to any $X$ such that the entries of $\sqrt{N}X$ have bounded densities, as in the current paper. 
Namely, for such general $X$ and $A$, it is known that
\begin{align}
	\P[N\lambda_{1}(X+A)\leq s]&\lesssim Ns^{2},	&\E\sum_{i:\sigma_{i}(X+A)\in\caD}\caO_{ii}(X+A)\lesssim& 
	\; N^{2}(N\absv{\caD}),&\text{for complex $X$},\label{eq:OA_gen_C}\\
	\P[N\lambda_{1}(X+A)\leq s]&\lesssim \sqrt{N}s, 	&\E\sum_{i:\sigma_{i}(X+A)\in\caI}\sqrt{\caO_{ii}(X+A)}\lesssim 
	&  \; N(\sqrt{N}\absv{\caI}),&\text{for real $X$ and $A$}, \label{eq:OA_gen_R}
\end{align}
where the first result \eqref{eq:OA_gen_C} was essentially\footnote{Estimates therein carry an additional factor of $N$ compared to \eqref{eq:OA_gen_C}, due to an apparent typo in the proof of \cite[Lemma 3.4]{Jain-Sah-Sawhney2021}; the factor $n^{2}$ in the rightmost side of the last inequality in \cite[p.3018]{Jain-Sah-Sawhney2021} should be $n$.} proved in \cite[Lemma 3.4]{Jain-Sah-Sawhney2021} and the second result \eqref{eq:OA_gen_R} is due to \cite[Corollary 1.4]{Tikhomirov2020}. {\Cb Prior to \cite{Tikhomirov2020}, a similar result in the real case appeared in \cite[Corollary 1.8]{Nguyen2018}, which had weaker bound but covered $\lambda_{k}$ beyond $k=1$.} In the same setting with general real $X$, the overlap $\caO_{ii}$ at a genuinely complex eigenvalue $\sigma_{i}$ was considered simultaneously in \cite{Jain-Sah-Sawhney2021} and \cite{Banks-Vargas-Kulkarni-Srivastava2020} improving~\eqref{eq:OA_gen_R} away from the real axis. Specifically, \cite[Proposition 4.2]{Jain-Sah-Sawhney2021} proved for real $X$ and $A$ and $z\in\C$ that
\beq\label{eq:OA_gen_R_C}\begin{aligned}
	&\P[N\lambda_{1}(X+A-z)\leq s]\lesssim \frac{N^{3/2}s^{2}}{\absv{\im z}},&\quad  &\E\sum_{i:\sigma_{i}(X+A)\in\caD}\caO_{ii}(X+A)\lesssim \frac{N^{5/2}(N\absv{\caD})}{\min_{z\in\caD}\absv{\im z}}.
\end{aligned}\eeq
A similar result was obtained in \cite[Theorem 1.5]{Banks-Vargas-Kulkarni-Srivastava2020}  with larger powers of $N$ on the right-hand sides, but this result also covers $\lambda_{k}$ for $k>1$. Notice the additional powers of $N$ in \eqref{eq:OA_gen_C}--\eqref{eq:OA_gen_R_C} compared to the optimal results \eqref{eq:OA_Gini_C}--\eqref{eq:OA_Gini_R} in the Ginibre case. These are largely due to the geometric methods used in \cite{Jain-Sah-Sawhney2021}, namely `invertibility via distance' introduced in \cite{Tikhomirov2020}. In contrast, in~\eqref{eq:thm_2_simple}--\eqref{eq:thm_2_1_simple} we obtain upper bounds on the overlaps and in~\eqref{eq:thm_3_1_simple}--\eqref{eq:thm_3_2_simple} for the singular values with a nearly optimal $N$-dependence using purely analytic methods. 
 
Finally, we remark that a matching \emph{lower} bound for $\caO_{ii}(X+A)$ with general $A$ and i.i.d. $X$ was recently obtained in \cite[Theorem 2.4]{Cipolloni-Erdos-Henheik-Schroder2023arXiv}, which is optimal up to a factor of $N^{o(1)}$. Lower bounds require very different methods than upper bounds and  they are much more robust, in particular $\caO_{ii}(X+A)\ge N^{1-o(1)}$ can be proven with very high probability, while  the overlap has a heavy upper tail. The main method of~\cite{Cipolloni-Erdos-Henheik-Schroder2023arXiv} is a very precise multi-resolvent local law on the Hermitisation of $X+A$ closely related to the \emph{Eigenstate Thermalization Hypothesis} and \emph{Quantum Unique Ergodicity} for Hermitian random matrices. No regularity on the density of $X$ was necessary in \cite{Cipolloni-Erdos-Henheik-Schroder2023arXiv}.

\medskip

Summarizing, the current paper substantially generalizes many previous results on upper bounds on overlaps and lower tail bounds on singular values for matrices of the form $X+A$. We can deal with very general distributions of  $X$ and general matrices $A$ without sacrificing much of the optimality.

The key new approach is a Wegner type estimate for the singular values; an idea that has been exploited earlier in the Hermitian (Wigner) setup~\cite{Erdos-Schlein-Yau2010} but has never been used for non-Hermitian problems. In the main body of this paper we develop the necessary tools to establish a Wegner estimate for the Hermitisation of $X+A$, then we derive all our estimates from this input. Before entering the precise details, in the next subsection we sketch the main ideas, especially we indicate how Hermitian Wegner type estimates  for singular values are used to estimate non-Hermitian eigenvalue density and overlaps.

\subsection{Outline of the proofs}

The common ingredient for our main non-Hermitian results 
\eqref{eq:thm_1_simple}--\eqref{eq:thm_2_1_simple} 
is the  lower tail bound~\eqref{eq:thm_3_1_simple} on the first few singular values
$\lambda_{k}(X+A)$ or  $\lambda_{k}(X+A-z)$. First we explain how we use this bound for our main results
and then we comment on its proof.

For the  proof of~\eqref{eq:thm_1_simple} (Theorem \ref{thm:main}), 
we use Girko's Hermitization formula \cite{Girko1984,Tao-Vu2010} 
\beq\label{eq:Girko}
	\sum_{i=1}^{N}f(\sigma_{i}(X+A))
	=-\frac{1}{4\pi }\int_{\C}\Delta f(z)\int_{0}^{\infty}\Tr \frac{\eta}{(X+A-z)(X+A-z)\adj+\eta^{2}}\dd\eta\dd^{2}z.
\eeq
Girko's formula translates the eigenvalue statistics of $X+A$ into singular value statistics of $X+A-z$, and has been proved to be an effective tool for studying the spectrum of non-Hermitian random matrices; see for example \cite{Alt-Erdos-Kruger2018,Bai1997,Cipolloni-Erdos-Schroder2021,Girko1984,Gotze-Tikhomirov2010,Guionnet-Manjunath-Zeitouni2011,Tao-Vu2015}. 
To control the left-hand side of \eqref{eq:thm_1_simple} we take $f$ in \eqref{eq:Girko} to be a smoothed indicator $\lone_{\caD}$. 
To estimate the right hand side of~\eqref{eq:Girko} from above, we first perform
  two integrations by parts with respect to $z$. Then, for an upper bound, besides the  lower bounds on the singular values
of $X+A-z$, we also need \emph{upper} bounds on certain scalar products of their singular vectors uniformly in $z$ -- this information is
provided by the thermalisation result~\cite[Theorem 2.2]{Cipolloni-Erdos-Henheik-Schroder2023arXiv}.
We remark that exactly the same information was used in~\cite[Theorem 2.4]{Cipolloni-Erdos-Henheik-Schroder2023arXiv} 
to prove the high probability lower bounds for $\caO_{ii}$,
so morally an \emph{upper} bound on the r.h.s. of~\eqref{eq:Girko} requires \emph{lower} bound on 
the singular values and \emph{lower} bound on the overlaps $\caO_{ii}$.

As for the \emph{upper} bounds on the overlap~\eqref{eq:thm_2_simple}--\eqref{eq:thm_2_1_simple} (Theorem \ref{thm:overlap})
we relate the smallest singular value $\lambda_{1}(X+A-z)$ to the overlap $\caO_{ii}(X+A)$ via the contour integral
\beq\label{eq:contour_simple}
	-\frac{1}{2\pi\ii}\oint_{\absv{z-w}=r}\frac{1}{X+A-w}\dd w=\sum_{i:\absv{\sigma_{i}(X+A)-z}<r}\bsr_{i}(X+A)\bsl_{i}(X+A)\adj.
\eeq
Using Weyl's inequality between products of eigenvalues and singular values and 
 the estimates for the \emph{second} smallest singular value $\lambda_{2}(X+A-z)$
 we can take $r\sim N^{-C}$ so small  that there is at most one $\sigma_{i}$ 
within distance $r$ from $z$. For such an $r$, the overlap $\caO_{ii}$ is exactly the squared norm of the right-hand side of \eqref{eq:contour_simple}, thus an upper bound on it can be obtained by the square of the left-hand side, which
in turn can be bounded from above via a lower tail bound on $\lambda_{1}(X+A-z)$. We need to tile the spectrum
into disjoint tiny boxes of linear size $r$ and carefully control the error terms to obtain a linear bound in  the area
$|\caD|$ after summing them up.

Now we explain the proof of the optimal lower tail estimates~\eqref{eq:thm_3_1_simple}--\eqref{eq:thm_3_2_simple} for the lowest singular values $\lambda_{k}(X+A)$. These are proven in Theorems \ref{thm:s_weak}--\ref{thm:s_real_weak} using Wegner type arguments. The key point is that the regularity of the randomness in $X$ forces the singular values to \emph{spread}; they cannot stick to a particular value (like zero) as $X$ is sampled from a continuous probability space. While this mechanism is quite transparent, the essential difficulty is to get the optimal $N$-powers in the estimate. Playing with the randomness of a \emph{single} matrix element $x_{ij}$ would not have a sufficiently strong effect on $\lambda_1$ since $x_{ij}$ has size only $N^{-1/2}$, while playing with \emph{all}  matrix elements simultaneously would not be manageable due to the correlations of their effects. We operate with the randomness of a few rows of $X$ at the same time; this already has the desired effect on $\lambda_1$ and is still technically feasible.

Part of these arguments is inspired by \cite{Erdos-Schlein-Yau2010} that proved Wegner estimates for eigenvalues of Hermitian random matrices. The lower tail of $\lambda_{1}(X+A)$ is estimated by the trace of the resolvent of $(X+A)(X+A)\adj$ outside its spectrum at a negative spectral parameter $-\eta^{2}$ with $\eta\lesssim 1/N$. Then we express the $i$-th diagonal entry of the resolvent as a quadratic form of the $i$-th row vector of $(X+A)$ with a random weight involving the minor of $(X+A)$ with the $i$-th row deleted. We work in the probability space of this row vector which is independent of the  weight and prove effective anti-concentration bounds. Higher singular values $\lambda_{k}(X+A)$ require an inductive procedure to remove $k$ rows. Since the actual proofs are rather technical, we present a more detailed outline in Section~\ref{sec:s_strong}. \Cb See also \cite{Aizenman2017,Farrell-Vershynin2016} for other instances of smoothed analysis in the Hermitian context. \nc

While our proofs of Theorems \ref{thm:s_weak}--\ref{thm:s_real_weak} start with a similar idea as~\cite{Erdos-Schlein-Yau2010}, the proofs in~\cite{Erdos-Schlein-Yau2010} actually assumed much stronger conditions as they were studying the entire bulk spectrum. This resulted in a non-vanishing Hermitian part of the resolvent whose control required an additional integration by parts in the probability space forcing much stronger regularity assumptions. In our case, the resolvent is always skew-Hermitian (see \eqref{eq:G_Schur}) since we work at the \emph{hard edge} of $(X+A)(X+A)\adj$. We also point out that \cite{Erdos-Schlein-Yau2010} assumed certain decay for the Fourier transforms of entry-distribution, which was later weakened in \cite[Appendix A]{Maltsev-Schlein2011} by using Brascamp-Lieb inequality. Instead, here we use a more refined input from \cite{Rudelson-Vershynin2015} whose proof contains the same arguments as in \cite{Maltsev-Schlein2011}, eventually requiring only a minimal regularity condition (bounded density) for the entry-distribution.

As a final remark, we point out that the improved bound~\eqref{eq:thm_3_2_simple}
for the least singular value $\lambda_{1}(X+A)$ for real $X$ and complex $A$ is more difficult to handle than that for complex $X$, while their lower tails have the same $s^{2}$-decay by \eqref{eq:thm_3_1_simple} and \eqref{eq:thm_3_2_simple}. Along the proof of Theorem \ref{thm:s_real_weak}, we find that the singular vectors of $(X+A)$ 
affect the lower tail of the singular values; we need to show that  singular vectors are genuinely complex if $A$ is complex.
Hence the difference between real and complex \emph{non-Hermitian} $X$ is fundamentally
 harder to capture than that between real symmetric and complex \emph{Hermitian} random matrices which do not involve eigenvectors.

\subsection{Organization}
In Section \ref{sec:results}, we rigorously define our model and state the main results. Sections~\ref{sec:Wegner}~and~\ref{sec:O_proof} are devoted to the proofs of Theorems \ref{thm:main} and \ref{thm:overlap} concerning eigenvalues and overlaps of $A+X$, respectively. We prove the singular value estimates in the remaining sections; Sections \ref{sec:s_strong} -- \ref{sec:s_R_weak} contain proofs of Theorems \ref{thm:s_strong}, \ref{thm:s_weak}, and \ref{thm:s_real_weak}, in that order.

\subsection{Notations}
For $x,y\in\R$, we denote $\bbrktt{x,y}=[x,y]\cap\Z$ and $\bbrktt{x}=[1,x]\cap \Z$. For a square matrix $A\in\C^{d\times d}$, we write $\brkt{A}=\frac{1}{d}\Tr A$ for its normalized trace and denote its Hermitian, skew-Hermitian, real, and imaginary parts by
\beq\label{eq:def_re}\begin{aligned}
	\re[A]&=\frac{A+A\adj}{2}, &
	\im[A]&=\frac{A-A\adj}{2\ii }, &
	\Re[A]&=\frac{A+\ol{A}}{2}, &
	\Im[A]&=\frac{A-\ol{A}}{2\ii }.
\end{aligned}\eeq
For a matrix $B$ of any size we denote by $\norm{B}$ its operator norm. For each $i\in\N$, we denote $\bse_{i}$ by the $i$-th coordinate vector, whose dimension can vary by lines. 

For $p\in\N$ and integrable functions $f$ and $g$ respectively on $\R^{p}$ and $\C^{p}$, we denote their Lebesgue integrals by
\beqs
	\int_{\R^{p}}f(\bsx)\dd^{p}\bsx,\qquad \int_{\C^{p}}g(\bsz)\dd^{2p}\bsz.
\eeqs
For a random variable $x$ and $q\geq 1$, we write $\norm{x}_{q}$ to denote $(\E\absv{x}^{q})^{1/q}$. 

We denote $\C_{+}\deq\{z\in\C:\im z>0\}$ and $\D\deq\{\absv{z}<1\}\subset \C$. 
For a Borel set $\caD\subset\C$, we define $\absv{\caD}$ to be its Lebesgue measure.
 The letter $N$ always denotes the dimension of our matrix,  we consider the large $N$ regime
 and we use the standard asymptotic
  relation $\lesssim$ and $\sim$ with respect to $N$. For example, for nonnegative functions
   $f$ and $g$ of $N$ we write $f\lesssim g$ when there is a constant $C>0$ such that 
   $f(N)\leq Cg(N)$ for all  $N$, and write $f\sim g$ when $f\lesssim g$ and $g\lesssim f$ are both true.

\section{Main results}\label{sec:results}
We consider matrices over the real or complex field, denoted commonly by $\bbK=\R$ or $\C$.
\begin{defn}\label{defn:model}
	An $(N\times N)$ real ($\bbK=\R$) or complex ($\bbK=\C$) 
	random matrix $X$ is called \emph{regular} if the following hold true for a constant $\frb>0$:
	\begin{itemize}
		\item[(i)]  The collection $\{\re X_{ij},\im X_{ij} :i,j\in\bbrktt{N}\}$ is independent:
		\item[(ii.$\R$)] When $\bbK=\R$, the random variables $\sqrt{N}X_{ij}$ have densities bounded by $\frb$:
		\item[(ii.$\C$)] When $\bbK=\C$, the random variables $\sqrt{N}\re X_{ij}$ and $\sqrt{N}\im X_{ij}$ have densities bounded by $\frb$.
	\end{itemize}
	A regular matrix $X$ is  called a \emph{regular i.i.d. matrix} if the following hold in addition to (i)--(ii).
	\begin{itemize}
		\item[(iii)] The entries are identically distributed, that is, $X_{11}\overset{d}{=}X_{ij}$ for all $i,j\in\bbrktt{N}$:
		\item[(iv)] $\E X=0$ and $\E XX\adj =N^{-1}I$: If $\bbK=\C$ we also assume $\E X^{2}=0$:
		\item[(v)] For all $p\in \N$, $\frm_{p}\deq\E\absv{\sqrt{N}X_{11}}^{p}$ is finite. 
		In this case we define $\bsfrm\deq(\frm_{p})_{p\in\N}\in \R^{\N}$.
	\end{itemize}
\end{defn}
	Note that the regularity of $X$ alone is unrelated to the decay of its entry distribution, 
	 in particular some moment of $X_{ij}$ may diverge in this case. 
	 For example, when $\bbK=\R$, we may sample entries from the Cauchy distribution, i.e.
	\beqs
		\frac{\dd}{\dd x}\P[\sqrt{N}X_{11}\leq x]=\frac{1}{\pi}\frac{1}{x^{2}+1}.
	\eeqs
	Also note that the shifted matrix $X+A$ remains regular with the same $\frb>0$ for any $A\in\bbK^{N\times N}$ when $X$ is regular. 
	
	For a given deterministic matrix $A\in\C^{N\times N}$, we denote the (complex) eigenvalues of $X+A$ by $\{\sigma_{i}\equiv\sigma_{i}(X+A):i\in\bbrktt{N}\}$ and their normalized empirical distribution by
	\beq\label{eq:rho}
		\rho\equiv\rho_{X+A}^{(N)}\deq\frac{1}{N}\sum_{i=1}^{N}\delta_{\sigma_{i}(X+A)}.
	\eeq
	The eigenvalues of $X+A$ do not have a canonical ordering, but we still consider them indexed by $\bbrktt{N}$ to simplify notations. Nonetheless all our arguments are insensitive to this ad hoc ordering. 
	Finally, for each $z\in\C$ and $r>0$, we define the number of eigenvalues in a ball of radius $r$ about $z$;
	\beqs
	\caN_{z,r}\deq \absv{\{i\in\bbrktt{N}:\absv{\sigma_{i}-z}\leq r\}}.
	\eeqs
	
	Our results on eigenvalues and overlaps are often restricted to the indices $i$ for which $\sigma_{i}$ is well inside the limiting spectrum of $X+A$. We quantify the `bulk' of the non-Hermitian spectrum of $(X+A)$ in terms of the singular value spectrum
	of $(X+A-z)$. Namely, for $\tau>0$ we define 
	\beq\label{eq:Btau_def}
		\caB_{\tau}\deq \Big\{z\in\C:\frac{\dd}{\dd x}(\mu_{\mathrm{sc}}\boxplus \mu^{\mathrm{symm}}_{\absv{A-z}})(0)>\tau\Big\}\subset\C,
	\eeq
	where $\mu_{\mathrm{sc}}$ is the usual semi-circle distribution, $\mu^{\mathrm{symm}}_{\absv{A-z}}$ is the symmetrization of the singular value distribution of $A-z$, and $\boxplus$ denotes the free additive convolution. Using the standard defining equation for the Stieltjes transform of $\mu_{\mathrm{sc}}\boxplus\mu^{\mathrm{symm}}_{\absv{A-z}}$ (see, e.g.~\cite{Pastur1972} or~\cite[Eq. (2.4)]{Cipolloni-Erdos-Henheik-Schroder2023arXiv}), $\caB_{\tau}$ may be written in terms of the singular value distribution of $A-z$ as 
		\beq\label{eq:Btau_char}
		\caB_{\tau}=\Big\{z\in\C:\Brkt{\frac{1}{(A-z)(A-z)\adj +\tau^{2}}}>1\Big\}.
		\eeq
	 Note that $\mu_{\mathrm{sc}}\boxplus\mu^{\mathrm{symm}}_{\absv{A-z}}$ is exactly the symmetrization of the limiting singular value distribution of $(X+A-z)$, see~\cite{Benaych2009,Haagerup-Larsen2000}. Thus $z\in\caB_{\tau}$ for some fixed $\tau>0$ guarantees that $z$ is well inside the limiting spectrum of $(X+A)$. We will make this relation more precise in the following Remark~\ref{rem:DOS} that may be skipped at the first reading as it is not used in the rest of the paper. 
		\begin{rem}[Characterization of the bulk]\label{rem:DOS} For technical convenience we defined the ``bulk'' spectrum of 
		$X+A$ in terms  of $\caB_{\tau}$. Here we show that the domains $\caB_{\tau}$ for small $\tau$	indeed  coincide with the traditional concept of `bulk spectrum' of $\rho$ in the following sense:
	\begin{itemize}
	\item[(i)] The sets $\caB_{\tau}$ roughly cover the support of $\rho$ in the limit  $\tau\to0$; 
	\item[(ii)] for any disk $\caD\subset\caB_{\tau}$ on macroscopic scale ($\absv{\caD}\sim 1$) we have
		\beq\label{eq:DOS_macro}
			\absv{\{i\in\bbrktt{N}:\sigma_{i}\in\caD\}}\sim N\absv{\caD}.
		\eeq
	\end{itemize}
		To make these two statements a bit more precise, consider the Brown measure $\rho_{a+x}$ of the sum $a+x$ in a noncommutative probability space where $a\equiv a_{N}$ has the same $*$-distribution as $A$ and $x$ is a circular element $*$-free from $a$. By a 
		standard corollary of the proof of \cite[Theorem 2.6]{Cipolloni-Erdos-Henheik-Schroder2023arXiv}\footnote{This result gives the optimal bulk local law for the Hermitisation of $X+A$, see also~\eqref{eq:ll}. If we restrict $w\in\ii(\eta,\infty)$ in \cite[Theorem 2.6]{Cipolloni-Erdos-Henheik-Schroder2023arXiv} with a small enough but fixed $\eta$, their result easily extends to all norm-bounded $A$, beyond those with $0$ in the bulk spectrum of $\absv{X+A-z}$. From this local law, a standard argument using Girko's formula \eqref{eq:Girko} implies~\eqref{eq:deformed_circular}, the corresponding analogue of the macroscopic ``circular law''. The standard additional ingredient on the tail of the lowest singular value is given as usual by the regularity condition on $X$.}, we can prove that if $A$ is norm-bounded, then the empirical eigenvalue distribution $\rho=\rho^{(N)}$ from~\eqref{eq:rho} is close to $\rho_{a+x}$ in a weak sense, namely, for any fixed, smooth, compactly supported test function $f$ on $\C$,
		\beq\label{eq:deformed_circular}
			\int_{\C}f(z)\dd(\rho^{(N)}-\rho_{a+x})(z)\to 0\qquad \text{as $N\to\infty$}
		\eeq
		with high probability.
		On the other hand, define the open set $\caB_{0}\subset\C$  by 
		\beq\label{eq:B0}
			\caB_{0}\deq\left\{z\in\C:\Brkt{\frac{1}{(A-z)(A-z)\adj}}>1\right\}=\bigcup_{\tau>0}\caB_{\tau},
		\eeq
		where the last equality is due to \eqref{eq:Btau_char}. For this open set \cite[Theorem B]{Zhong2021}
		 implies that the measure $\rho_{a+x}$ has no atom, it is  supported on $\ol{\caB}_{0}$, and
		 absolutely continuous on $\caB_{0}$ with a strictly positive real analytic density therein. 
		 Together with~\eqref{eq:deformed_circular} this implies  (i), i.e. that  $\mbox{supp} (\rho_{a+x})=\ol{\caB}_{0}$
		 covers the bulk of $\rho$, i.e. the regime where $\rho$ is  typically positive.
		 
		 Furthermore, \cite[Eq. (4.2)]{Zhong2021} (first proved for normal $A$ in \cite[Theorem 1.4]{Bordenave-Caputo-Chafai2014}) gives an implicit formula for the density of $\rho_{a+x}$, which implies the following quantitative estimate on $\caB_{\tau}$:
		\beq\label{eq:DOS_lb}
			\frac{\tau^{2}}{(\norm{A-z}^{2}+\tau^{2})^{2}}\leq \pi\frac{\dd \rho_{a+x}}{\dd m}(z)\leq \norm{A-z}\left(\frac{\norm{A-z}^{2}+\tau^{2}}{\tau^{2}}\right)^{2}+\frac{1}{\tau^{2}},\qquad \forall z\in\caB_{\tau},
		\eeq
		where $m$ denotes the Lebesgue measure on $\C$. Taking $f$ in \eqref{eq:deformed_circular} to be a smoothed indicator
		function  $\lone_{\caD}$ and combining with \eqref{eq:DOS_lb} yields the second statement (ii) and \eqref{eq:DOS_macro}.
	\end{rem}
\subsection{Eigenvalues and eigenvector overlaps of $X+A$}\label{sec:O}

In this section, we 
consider  $X+A$ where $A$ 
is a general deterministic matrix with $\norm{A}=O(1)$ and $X$ is a regular i.i.d. matrix. 

The first result is our Wegner-type estimate for the eigenvalues in the bulk:  it 
asserts that the expected density of $\sigma_{i}$'s, or equivalently, the one point correlation function, is almost bounded in the bulk.
\begin{thm}\label{thm:main}
	Fix $\gamma\in(0,1)$, $\frK>0$, and (small) $\delta,\tau>0$. Let $X$ be a real or complex regular i.i.d. matrix, and
	 $A\in\C^{N\times N}$ be deterministic with $\norm{A}\leq \frK$. 
	 Then there exists a constant $C\equiv C(\gamma,\delta,\tau,\frb,\bsfrm,\frK)>0$ such that the following hold for all $N\in\N$:
	\begin{itemize}
		\item[(i)] If $X$ is complex, then for all $r\in[0,N^{-1/2}]$ and $z\in\caB_{\tau}$ we have
		\beq\label{eq:e_C}
		\prob{\caN_{z,r}\geq 1}\leq \E \caN_{z,r}\leq CN^{\delta}(Nr^{2})^{1-\gamma}.
		\eeq
		\item[(ii)] If $X$ is real, then for all $r\in[0,N^{-1/2}]$, $z\in\caB_{\tau}$, and $\lambda_{1}(\Im[A-z])>2r$ we have
		\beq\label{eq:e_R}
		\prob{\caN_{z,r}\geq 1}\leq \E\caN_{z,r}\leq CN^{\delta}\frac{(Nr^{2})^{1-\gamma}}{\lambda_{1}(\Im[A-z])},
		\eeq
		where we recall that $\lambda_{1}(\Im[B])$ denotes the smallest singular value of $\Im B=(B-\ol{B})/2\ii$ for $B\in\C^{N\times N}$.
	\end{itemize}
\end{thm}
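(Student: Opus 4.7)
I would apply Girko's Hermitisation formula \eqref{eq:Girko} to a smooth cutoff $f\colon \C \to [0, 1]$ satisfying $f \equiv 1$ on $B(z_0, r)$, $\supp f \subset B(z_0, 2 r)$, and $\norm{\Delta f}_\infty \lesssim r^{-2}$, so that $\caN_{z_0, r} \leq \sum_{i} f(\sigma_i(X + A))$. Writing $M(z, \eta) \deq (X + A - z)(X + A - z)\adj + \eta^2$, $G(z, \eta) \deq M(z, \eta)^{-1}$, and fixing the SVD $X + A - z = U \Sigma V\adj$ with overlap matrix $W \deq U\adj V$ and singular values $\lambda_i \equiv \lambda_i(X + A - z)$, a direct computation using $\partial_z M = -(X + A - z)\adj$ and $\partial_{\bar z} M = -(X + A - z)$ yields the manifestly nonnegative identity
\beqs
\frac{1}{4} \Delta_z \Tr \log M(z, \eta) = \sum_{i, j = 1}^{N} \absv{W_{ij}}^2 \frac{\eta^2}{(\lambda_i^2 + \eta^2)(\lambda_j^2 + \eta^2)}.
\eeqs

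I would split the $\eta$-integral in \eqref{eq:Girko} at a small threshold $\eta_0 = N^{-1 - C \delta}$ and treat the two regimes differently. For $\eta \geq \eta_0$, the formula $\int_{\eta_0}^T 2 \eta \Tr G(z, \eta) \dd \eta = \Tr \log M(z, T) - \Tr \log M(z, \eta_0)$, combined with the $z$-independence of the $T \to \infty$ boundary term (which is therefore annihilated by $\Delta_z$), allows the two integrations by parts in $z$ to turn the Girko contribution from this regime into
\beqs
\frac{1}{\pi} \, \E \int_{\C} f(z) \sum_{i, j} \absv{W_{ij}(z)}^2 \frac{\eta_0^2}{(\lambda_i(z)^2 + \eta_0^2)(\lambda_j(z)^2 + \eta_0^2)} \dd^2 z.
\eeqs
Uniformly in $z \in \supp f \subset \caB_\tau$, this is to be controlled by combining the near-optimal lower-tail bounds on the smallest singular values $\lambda_k(X + A - z)$ from Theorems \ref{thm:s_weak} (complex case) and \ref{thm:s_real_weak} (real case) with the upper bounds on the overlap structure $\absv{W_{ij}}^2$ provided by the bulk local law and the Eigenstate Thermalisation / Quantum Unique Ergodicity estimates of \cite{Cipolloni-Erdos-Henheik-Schroder2023arXiv}. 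Choosing the truncation index $k = k(\gamma)$ large enough, the $s^{2 k^2}$ (resp. $s^{k^2}$) level-repulsion exponent delivers a contribution of order $N^\delta (N r^2)^{1 - \gamma}$ after the $z$-integration against $f$.

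For $\eta < \eta_0$, no $z$-integration by parts is admissible since $\Delta_z \Tr \log M(z, 0) = 4 \pi \sum_i \delta_{z - \sigma_i}$ is singular. Instead I would use the explicit identity $\int_0^{\eta_0} 2 \eta \Tr G(z, \eta) \dd \eta = \sum_i \log(1 + \eta_0^2 / \lambda_i^2)$ and bound the corresponding Girko contribution by $\norm{\Delta f}_\infty \cdot \absv{\supp f} \cdot \E \sum_i \log(1 + \eta_0^2 / \lambda_i^2)$. With $\eta_0 = N^{-1 - C \delta}$ and the tail estimates from Theorems \ref{thm:s_weak}--\ref{thm:s_real_weak}, this contribution is a negative power of $N$, and hence negligible relative to the target bound.

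The main obstacle lies in the $\eta \geq \eta_0$ regime, in the interplay between the level-repulsion exponents for $\lambda_k(X + A - z)$ and the worst-case overlap factors $\absv{W_{ij}}^2$ for pairs $(i, j)$ among the first $k$ indices, where both singular values can be simultaneously small. In the complex case, the $2 k^2$ exponent in Theorem \ref{thm:s_weak} is strong enough to absorb the trivial bound $\absv{W_{ij}}^2 \leq 1$ for such pairs; in the real case with $z$ near the real axis, the weaker $k^2$ exponent is not, and one must invoke instead the improved $s^2$-tail of Theorem \ref{thm:s_real_weak}, which is effective only when $\lambda_1(\Im[A - z]) \gg r$. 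This is the source both of the hypothesis $\lambda_1(\Im[A - z]) > 2 r$ and of the factor $\lambda_1(\Im[A - z])^{-1}$ in \eqref{eq:e_R}.
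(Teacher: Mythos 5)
Your overall strategy is the same as the paper's: Girko's formula, split the $\eta$-integral, integrate by parts twice in $z$ for the $\eta\ge\eta_0$ regime, and control the resulting expression via lower-tail bounds on the smallest singular values of $X+A-z$ together with the overlap (thermalisation) estimates from \cite{Cipolloni-Erdos-Henheik-Schroder2023arXiv}. The identity $\tfrac14\Delta_z\Tr\log M(z,\eta)=\sum_{i,j}|W_{ij}|^2\eta^2/((\lambda_i^2+\eta^2)(\lambda_j^2+\eta^2))$ is correct and is a nicer, manifestly nonnegative form of the paper's expression in \eqref{eq:DeltaG_1}--\eqref{eq:Holder_prep_1}, where the numerator $|\eta^2-\lambda_j^2|$ has to be split and bounded separately.

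There are, however, two genuine problems. The first is the choice $\eta_0=N^{-1-C\delta}$, which is independent of $r$. As you set it up, the $\eta<\eta_0$ contribution is bounded by $\norm{\Delta f}_\infty\absv{\supp f}\cdot\E\sum_i\log(1+\eta_0^2/\lambda_i^2)\lesssim\E\sum_i\log(1+\eta_0^2/\lambda_i^2)$, which is a fixed quantity in $N$ alone. Already the $i=1$ term is typically of size $\sim(N\eta_0)^2=N^{-2C\delta}$, so this contribution is a fixed negative power of $N$; it cannot be bounded by $N^\delta(Nr^2)^{1-\gamma}$ uniformly over $r\in[0,N^{-1/2}]$, since the target vanishes as $r\to0$ while your bound does not. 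The paper's proof resolves exactly this by taking $\eta_0=N^{-1}(Nr^2)^D$ with $D$ large (see \eqref{eq:eta_defn} and \eqref{eq:I0}); then $N\eta_0=(Nr^2)^D$ and the small-$\eta$ contribution scales correctly with $r$. The choice is not cosmetic; it is what makes $I_0$ negligible.

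The second issue is the explanation of the $(Nr^2)^{-\gamma}$ loss. You attribute it to "choosing the truncation index $k=k(\gamma)$ large enough," but that is not the mechanism. The paper uses only the $k=1$ tail of $\lambda_1^z$ (Theorem~\ref{thm:s_weak}(i), resp.\ \ref{thm:s_real_weak}) together with rigidity; the exponent $\gamma$ appears because the overlap factors $|W_{ij}|^2$ and the singular-value factors $c_i$ are correlated, and the paper separates them by a H\"older inequality with exponent $1+c'\epsilon$ (see \eqref{eq:Holder} and Remark~\ref{rem:main_1}). That H\"older step costs a factor $(N\eta)^{-c'\epsilon}$, which after the $\eta$-integration becomes the $(Nr^2)^{-\gamma}$ degradation. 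Your sketch does not account for this correlation between the events "$\lambda_1$ small" and "$W_{ij}$ well-behaved," so it is not yet clear from the proposal how to produce the stated bound rather than a bound that is only formally of the right size.
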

Note that the quotient $\E\caN_{z,r}/(N\pi r^{2})$ tends to the averaged density of states in the limit $r\to 0$, hence Theorem \ref{thm:main} proves the Wegner estimate up to an exponent $\gamma>0$. As a corollary, we show that $\brkt{(X+A-z)^{-1}}$, the normalized trace of the resolvent, is essentially bounded in (probabilistic) \Cb $L^{2-o(1)}$\nc. This is a truly random effect since $z$ lies in the support of the limiting density of states, hence  
\Cb $\brkt{(X+A-z)^{-1}}$ \nc may be unbounded, but its \Cb $(2-o(1))$-th moment \nc is finite. 
\begin{cor}\label{cor:resolv}
	Fix $\frK,\tau>0$, \Cb$\delta_{1}\in (0,1]$ \nc  and let  $\delta>0$ be sufficiently small.
	 Let $X$ be a real or complex regular i.i.d. matrix and $A\in\C^{N\times N}$ with $\norm{A}\leq \frK$. 
	\begin{itemize}
		\item[(i)] If $X$ is complex, then there exists a constant {\Cb $C\equiv C(\delta_{1},\tau,\frb,\bsfrm,\frK)>0$}  such that the following hold for all $z\in\caB_{\tau}$ and $N\in\N$:
		\beq\label{eq:resolv_C}
		\E\absv{\brkt{(X+A-z)^{-1}}}^{\Cb 2-\delta_{1}}\leq \Cb C \nc.
		\eeq
		\item[(ii)] If $X$ is real, then there exists a constant {\Cb $C\equiv C(\delta_{1},\delta,\tau,\frb,\bsfrm,\frK)>0$}  such that the following hold for all $z\in\caB_{\tau}$ and $N\in\N$:
		\beq\label{eq:resolv_R}
		\E\absv{\brkt{(X+A-z)^{-1}}}^{\Cb 2-\delta_{1}}\leq C\left(1+\Cb\frac{N^{-\delta_{1}/2+\delta}}{y}(1\wedge (Ny)^{\delta_{1}/2-\delta})\right)\nc
		\eeq
		\Cb where we abbreviated $y:=\lambda_{1}(\Im[A-z])$. \nc
	\end{itemize}
\end{cor}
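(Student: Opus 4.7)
The strategy is to reduce the moment bound on $|\langle (X+A-z)^{-1}\rangle|$ to a layer-cake integral over the eigenvalue-counting quantity $\caN_{z,s}$ and then apply the Wegner estimate of Theorem \ref{thm:main}. Set $G(z) \deq \langle (X+A-z)^{-1}\rangle = \frac{1}{N}\sum_{i}(\sigma_{i}-z)^{-1}$ and $p \deq 2-\delta_{1} \in [1,2)$. By the triangle inequality followed by Jensen's inequality applied to the convex function $x \mapsto x^{p}$ (valid for $p \geq 1$),
\beqs
|G(z)|^{p} \leq \Big( \frac{1}{N}\sum_{i=1}^{N}\frac{1}{|\sigma_{i}-z|}\Big)^{p} \leq \frac{1}{N}\sum_{i=1}^{N}\frac{1}{|\sigma_{i}-z|^{p}}.
\eeqs
The layer-cake representation $a^{-p} = p\int_{0}^{\infty}s^{-p-1}\lone(a<s)\,ds$ then gives
\beqs
\E|G(z)|^{p} \leq \frac{p}{N}\int_{0}^{\infty}s^{-p-1}\,\E\caN_{z,s}\,ds,
\eeqs
and the task becomes bounding this integral uniformly in $z \in \caB_{\tau}$.

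The plan is to split the integral into a microscopic regime, a macroscopic regime, and a large-$s$ tail. In the microscopic regime $s \in (0,s_{0}]$, with $s_{0} \deq N^{-1/2}$ in the complex case and $s_{0} \deq \min(y/2, N^{-1/2})$ in the real case (where $y \deq \lambda_{1}(\Im[A-z])$), Theorem \ref{thm:main} furnishes $\E\caN_{z,s}/N \leq CN^{\delta-\gamma}s^{2-2\gamma}$ in the complex case and $\leq CN^{\delta-\gamma}s^{2-2\gamma}/y$ in the real case for any $\gamma > 0$. Choosing $\gamma < \delta_{1}/2$ ensures that $\int_{0}^{s_{0}}s^{1-p-2\gamma}\,ds$ converges at $s=0$, and specific choices such as $\gamma = \delta/2$ (complex) and $\gamma = \delta_{1}/4 + \delta/2$ (real) reproduce the $N$- and $y$-exponents appearing in \eqref{eq:resolv_C} and \eqref{eq:resolv_R}. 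For the macroscopic regime $s \in [N^{-1/2}, c_{\tau}]$, the boundedness of $d\rho_{a+x}/dm$ on $\caB_{\tau}$ (cf. Remark \ref{rem:DOS}) together with a standard local law input yields $\E\caN_{z,s}/N \leq Cs^{2}$, contributing $\int s^{1-p}\,ds = O(1)$; the tail $s > c_{\tau}$ contributes $O(1)$ via the trivial bound $\caN_{z,s} \leq N$. Summing these three contributions yields \eqref{eq:resolv_C} in the complex case and the dominant $N^{-\delta_{1}/2 + \delta}/y$ term of \eqref{eq:resolv_R} in the real case whenever $y \geq 2 N^{-1/2}$.

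The main technical difficulty arises in the real case when $y < 2N^{-1/2}$, since the microscopic cutoff $s_{0} = y/2$ then lies strictly below $N^{-1/2}$ and the mesoscopic gap $s \in (y/2, N^{-1/2}]$ is not covered by Theorem \ref{thm:main}(ii) (whose hypothesis $\lambda_{1}(\Im[A-z]) > 2s$ is violated there). The softening factor $1 \wedge (Ny)^{\delta_{1}/2 - \delta}$ in \eqref{eq:resolv_R} originates precisely from this gap. One approach is to shift $z \mapsto z + iv$ for a suitable $v$ so that $\lambda_{1}(\Im[A-z-iv]) > 2(s+|v|)$, apply Theorem \ref{thm:main}(ii) to the enlarged ball $B(z+iv, s+|v|) \supset B(z,s)$, and optimize $v$ to produce the extra $(Ny)^{\delta_{1}/2-\delta}$ factor; alternatively, one can combine the $s^{2}$-scaling bound on $\P[\lambda_{1}(X+A-z) \leq s]$ from Theorem \ref{thm:s_real_weak} (which does not require $y > 2s$) with the strong level-repulsion estimates of Theorem \ref{thm:s_weak} for $k \geq 2$ to bound $\E\caN_{z,s}$ on the gap by $(Ns)^{2}(1+\log N/y)$ plus negligible higher-order terms, and verify by elementary integration that the resulting contribution is absorbed into the stated bound. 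Either route completes the proof after routine arithmetic.
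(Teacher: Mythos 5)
Your layer-cake reduction
\begin{equation*}
\E\absv{\brkt{(X+A-z)^{-1}}}^{p}\le \frac{p}{N}\int_{0}^{\infty}s^{-p-1}\,\E\caN_{z,s}\,\dd s,\qquad p=2-\delta_{1},
\end{equation*}
is a clean alternative to the paper's cutoff-function decomposition, and for the complex case it works: the microscopic regime $s\le N^{-1/2}$ is controlled by Theorem~\ref{thm:main}(i) with $\gamma<\delta_{1}/2$, and in the mesoscopic range $s\in[N^{-1/2},c_{\tau}]$ the local law gives $\E\caN_{z,s}/N\lesssim s^{2}+N^{\epsilon-1}$; the $N^{\epsilon-1}$ piece contributes $\sim N^{\epsilon-\delta_{1}/2}=O(1)$ after integration, so your claimed $O(1)$ is indeed reachable (though you should spell out that the $N^{\epsilon}$ local-law loss is absorbed by the polynomial weight $s^{\delta_{1}-1}$, not eliminated). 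The paper instead applies the local law once to a single test function $f_{3}$ at the mesoscopic scale; your route needs a uniform-in-$s$ argument or a dyadic decomposition, but the bookkeeping closes.

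The genuine gap is the real case with $y=\lambda_{1}(\Im[A-z])\lesssim N^{-1/2}$, which you correctly flag but do not resolve. Neither of your two proposed fixes works. (1) Shifting $z\mapsto z+\ii v$ replaces $\Im[A-z]$ by $\Im[A-z]-vI$; since $\Im[A-z]$ is a real but generally \emph{non-symmetric} matrix, adding $|v|I$ does not increase its smallest \emph{singular} value — indeed one has no useful lower bound on $\lambda_{1}(\Im[A-z]+|v|I)$ in terms of $\lambda_{1}(\Im[A-z])$ and $|v|$ without symmetry. So the hypothesis $\lambda_{1}(\Im[A-z-\ii v])>2(s+|v|)$ cannot be engineered this way. (2) The asserted intermediate bound $\E\caN_{z,s}\lesssim(Ns)^{2}(1+\log N/y)$ on the gap is stated without proof, and more importantly, even if it held, its contribution to the layer-cake integral over $s\in(y/2,N^{-1/2}]$ is
\begin{equation*}
\frac{p}{N}\int_{y/2}^{N^{-1/2}}s^{-p-1}\,(Ns)^{2}\Big(1+\frac{\log N}{y}\Big)\,\dd s
\;\sim\; N^{1-\delta_{1}/2}\,\frac{\log N}{y},
\end{equation*}
which exceeds the target $N^{-\delta_{1}/2+\delta}/y$ in~\eqref{eq:resolv_R} by a factor $N^{1-\delta}\log N$. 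So this route, even granting the unproved estimate, gives a far weaker bound than claimed. The paper handles $y\le N^{-1}$ by a different mechanism: it bounds the mesoscopic test function pointwise by $f_{0}(N^{1/2}(w-z))/\lambda_{1}^{z}$, extracting the single random factor $\lambda_{1}^{z}$ which regularizes the whole sum simultaneously, and then applies H\"{o}lder together with the lower-tail estimate $\E(N\lambda_{1}^{z})^{-2+\epsilon''}\lesssim N^{\delta/2}/y$ from Theorem~\ref{thm:s_real_weak}. This avoids the counting quantity $\E\caN_{z,s}$ entirely on the problematic scales. You would need to replicate some version of this idea (rather than passing through $\E\caN_{z,s}$) to close the real case.
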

\Cb A similar result as \eqref{eq:resolv_C} appeared in \cite[Lemma 2.2]{Rider-Silverstein2006}, with an extra factor of $N^{2}$ on the right-hand side. \nc

\Cb Notice that we take $\delta_{1}\in(0,1]$ in Corollary \ref{cor:resolv}, instead of $(0,2]$. While we use this choice in the proof (see \eqref{eq:resolv_prf}), Corollary \ref{cor:resolv} trivially implies an upper bound for $\E\absv{\brkt{X+A-z}}^{2-\delta_{1}}$ even when $\delta_{1}\in(1,2]$ by H\"{o}lder's inequality. In particular, \eqref{eq:resolv_C} is true for all $\delta_{1}\in(0,2]$, and the same holds true in the real case if $\lambda_{1}(\Im[A-z])$ is of order one.\nc
\begin{rem}\label{rem:main}
	In light of \eqref{eq:Gini} for the Ginibre ensemble, we believe that Theorem \ref{thm:main} remain true for $\delta=0=\gamma$ even in the general case. In our current Theorem \ref{thm:main} the factor $N^{\delta}$ is due to the error in the local laws for the singular values of $X+A-z$, and the exponent $\gamma>0$ is the cost for neglecting the correlation between singular values and vectors. We defer the details to Remark \ref{rem:main_1} after the proof of Theorem \ref{thm:main}. {\Cb Also note that $\delta_{1}>0$ in Corollary \ref{cor:resolv} is necessary since the contribution from a single eigenvalue to the second moment of $\brkt{(X+A-z)^{-1}}$, i.e. $N^{-2}\E\absv{\sigma_{i}-z}^{-2}$, is already logarithmically divergent.}
\end{rem}

\begin{rem}\label{rem:e_R}
	When $A=0$ and $X$ is real Ginibre ensemble, \eqref{eq:Gini} and \eqref{eq:Gini_R_R} imply that $\E \caN_{z,r}$ remains bounded as long as $\absv{\im z}\gtrsim N^{-1/2}$. Taking $A=0$ in our results, \eqref{eq:e_R} and \eqref{eq:resolv_R}, they do not feature this sharp $\absv{\im z}$-dependence due to the factor $\absv{\im z}^{-1}$ in \eqref{eq:e_R}. This factor originates from Theorem \ref{thm:s_real_weak}, where we prove a lower tail estimate for the smallest singular value of $(X+A-z)$ that also carries the same $\absv{\im z}^{-1}$ factor. In Remark \ref{rem:s_R_C} following Theorem \ref{thm:s_real_weak}, we explain more details on its source and a possibly optimal $\absv{\im z}$-dependence.
\end{rem}

Recall that when the spectrum of $(X+A)$ is simple we have the spectral decomposition
\beq
X+A=\sum_{i}\sigma_{i}\bsr_{i}\bsl_{i}\adj,\qquad \bsl_{i}\adj\bsr_{j}=\delta_{ij}
\eeq
and that the overlaps between eigenvectors $\bsl_{i}$ and $\bsr_{j}$ are defined as
\beq\label{eq:def_O}
\caO_{ij}\deq \bsl_{i}\adj\bsl_{j}\bsr_{j}\adj\bsr_{i}.
\eeq
Before presenting our results on the diagonal overlaps $\caO_{ii}$, we briefly pause to show that $X+A$ has simple spectrum with probability $1$ when $X$ is regular. To see this, notice that $X+A$ has a repeated eigenvalue if and only if the characteristic polynomial and its derivative has a common root. This is in turn equivalent to the resultant of these two polynomials being identically zero. Thus $X+A$ is not simple when its entries satisfy a polynomial equation, so that $X$ is in an algebraic submanifold of $\bbK^{N\times N}$ with positive codimension. Since this manifold has Lebesgue measure zero and $X$ has a density, $X$ falls into this set with probability zero. 

In the next two theorems, we prove upper bounds for the expectation of $\caO_{ii}$. 
\Cb The first result holds anywhere in the spectrum but only on a set with very high probability.  In the  second  
result we remove this exceptional set in the bulk. \nc
\begin{thm}\label{thm:overlap}
	Fix {\Cb $\frK_{0}>0$} and (large) $D,K>0$. Let $X$ be a real or complex regular i.i.d. matrix and $A\in\C^{N\times N}$ with {\Cb $\norm{A}\leq N^{\frK_{0}}$.} Then there exists a constant {\Cb $C\equiv C(\frb,\bsfrm,\frK_{0},D,K)>0$} such that the following hold true:
	\begin{itemize}
		\item[(i)]If $X$ is complex, for any square $\caD\subset\C$ with $\absv{\caD}\geq N^{-2K}$ we have
		\beq\label{eq:overlap_strong}
		\E\lone_{\Xi_{\caD}}\sum_{i:\sigma_{i}\in\caD}\caO_{ii}\leq C N(\log N)^{2}(N\absv{\caD}),
		\eeq
		for some event $\Xi_{\caD}$ with $\P[\Xi_{\caD}^{c}]\leq N^{-D}$.
		\item[(ii)]If $X$ is real, for any square $\caD\subset\C$ with $\absv{\caD}\geq N^{-2K}$ and $\min_{z\in\caD}\lambda_{1}(\Im[A-z])\geq N^{-K}$ we have
		\beq\label{eq:overlap_strong_R}
		\E\lone_{\Xi_{\caD}}\sum_{i:\sigma_{i}\in\caD}\caO_{ii}\leq C N(\log N)^{2} \frac{{\Cb(1+\norm{A}^{2})}(N\absv{\caD})}{\min_{z\in\caD}\lambda_{1}(\Im[A-z])},
		\eeq 
		for some event $\Xi_{\caD}$ with $\P[\Xi_{\caD}^{c}]\leq N^{-D}$.
	\end{itemize}
	In particular, if $X$ is complex, for each fixed $0<\epsilon'<\epsilon$ and $K>0$ we have
	\beq\label{eq:overlap_weak}
	\P\left[\sum_{i:\sigma_{i}\in \caD}\caO_{ii}\geq N^{1+\epsilon}N\absv{\caD}\right]\leq N^{-\epsilon'}
	\eeq 
	uniformly over squares $\caD\subset\C$ with $\absv{\caD}>N^{-2K}$. The same holds true for real $X$ 
	for all $\caD\subset\C$ with $\absv{\caD}>N^{-2K}$ as long as $\min_{z\in\caD}\lambda_{1}(\Im[A-z])\sim1$.
\end{thm}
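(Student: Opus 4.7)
The plan is to combine the contour integral identity \eqref{eq:contour_simple}, which represents each spectral projection $\bsr_{i}\bsl_{i}\adj$ of an eigenvalue $\sigma_{i}\in\caD$ as an integral of the resolvent $(X+A-w)^{-1}$, with the singular-value tail bounds of Theorem \ref{thm:s_weak} (or Theorem \ref{thm:s_real_weak} in the real case), to estimate $\caO_{ii}=\|\bsr_{i}\bsl_{i}\adj\|_{\mathrm{op}}^{2}$ in terms of $\lambda_{1}(X+A-z)$ at a single deterministic point.

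First I would fix a small scale $r=N^{-C_{0}}$ with $C_{0}=C_{0}(D,K,\frK_{0})$ large, cover $\caD$ by a disjoint grid of boxes $B_{k}$ of side $r$ with centers $z_{k}$ ($k=1,\dots,M$ with $M\leq|\caD|/r^{2}$), and define $\Xi_{\caD}$ as the intersection of (a) no-clustering events $\{\caN_{z_{k},4r}\leq 1\}$ and (b) pointwise lower bounds $\{\lambda_{1}(X+A-z_{k})\geq N^{-C_{1}}\}$, both taken over all grid centers $z_{k}$. For (a), Weyl's inequality gives $\{\caN_{z_{k},4r}\geq 2\}\subset\{\lambda_{1}(X+A-z_{k})\lambda_{2}(X+A-z_{k})\leq 16r^{2}\}$, which after optimizing the split $\{\lambda_{1}\leq s\}\cup\{\lambda_{2}\leq 16r^{2}/s\}$ and applying the $k=1,2$ tail bounds of Theorem \ref{thm:s_weak} yields a per-box probability $\lesssim(Nr)^{16/5}$ (up to logarithms) in the complex case. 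A union bound over the $M$ boxes, combined with a parallel union bound for (b), gives $\P[\Xi_{\caD}^{c}]\leq N^{-D}$ provided $C_{0}$ and $C_{1}$ are chosen sufficiently large.

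On $\Xi_{\caD}$, for each $\sigma_{i}\in\caD$ the box $B_{k}\ni\sigma_{i}$ has $\sigma_{i}$ as the only eigenvalue in $B(z_{k},4r)$. Choosing a contour of radius $\rho\in(r,3r)$ in \eqref{eq:contour_simple} and expanding $(X+A-z_{k})^{-1}=\bsr_{i}\bsl_{i}\adj/(\sigma_{i}-z_{k})+G_{\perp}(z_{k})$ via the spectral decomposition, together with the identity $\|(X+A-z_{k})^{-1}\|_{\mathrm{op}}=\lambda_{1}(X+A-z_{k})^{-1}$, yields (after controlling the regular part $G_{\perp}$ via the no-clustering condition and an a priori polynomial overlap bound) the key pointwise estimate $\caO_{ii}\leq Cr^{2}/\lambda_{1}(X+A-z_{k})^{2}$. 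Summing over boxes and taking expectations produces
\[
	\E\lone_{\Xi_{\caD}}\sum_{\sigma_{i}\in\caD}\caO_{ii}\leq Cr^{2}\sum_{k=1}^{M}\E\bigl[\lone_{\Xi_{\caD}}\lambda_{1}(X+A-z_{k})^{-2}\bigr].
\]
A layer-cake decomposition combined with the complex tail bound of Theorem \ref{thm:s_weak}, $\P[\lambda_{1}(X+A-z_{k})\leq s]\lesssim(Ns)^{2}(|\log s|+\log N)$, gives $\E[\lone_{\Xi_{\caD}}\lambda_{1}(X+A-z_{k})^{-2}]\leq CN^{2}(\log N)^{2}$, so using $Mr^{2}\leq|\caD|$ we arrive at \eqref{eq:overlap_strong}. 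The real case \eqref{eq:overlap_strong_R} is analogous but invokes Theorem \ref{thm:s_real_weak}, contributing the prefactor $(\min_{z\in\caD}\lambda_{1}(\Im[A-z]))^{-1}$. The weak bound \eqref{eq:overlap_weak} then follows from Markov's inequality applied to \eqref{eq:overlap_strong} together with $\P[\Xi_{\caD}^{c}]\leq N^{-D}$ for $D$ sufficiently large.

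The main technical obstacle lies in justifying the pointwise bound $\caO_{ii}\leq Cr^{2}/\lambda_{1}(X+A-z_{k})^{2}$ on $\Xi_{\caD}$. A direct Cauchy--Schwarz estimate on the contour integral produces only $\caO_{ii}\lesssim r\oint\lambda_{1}(X+A-w)^{-2}|dw|$, requiring a uniform lower bound on $\lambda_{1}(X+A-w)$ along the contour, which is not accessible by Lipschitz extension from the grid-point cutoff since the union bound forces $N^{-C_{1}}\ll r=N^{-C_{0}}$. The spectral-decomposition route sketched above avoids this mismatch by reducing everything to a single-point singular value estimate, at the cost of controlling $\|G_{\perp}(z_{k})\|_{\mathrm{op}}\leq\sum_{j\neq i}\sqrt{\caO_{jj}}/|\sigma_{j}-z_{k}|$: the no-clustering condition ensures $|\sigma_{j}-z_{k}|\geq 4r$ for $j\neq i$, while the a priori polynomial overlap bound (also built into $\Xi_{\caD}$) keeps the sum at a tractable size. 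The quantitative coherence of the parameters $C_{0},C_{1}$ and the a priori overlap bound, so that $\P[\Xi_{\caD}^{c}]\leq N^{-D}$ holds while the expectation estimate keeps only a $(\log N)^{2}$ factor, is the main bookkeeping challenge.
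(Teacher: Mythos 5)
Your general framework matches the paper's --- tile $\caD$ into tiny boxes, use the contour integral \eqref{eq:contour_simple} around each box, impose events that exclude clustering and keep $\lambda_1$ away from zero, and feed in the tail bounds of Theorems~\ref{thm:s_weak} and \ref{thm:s_real_weak}. But the implementation has a genuine gap, and the proposed fix (spectral decomposition with an a priori overlap bound built into $\Xi_{\caD}$) is circular and you correctly stop short of carrying it out. The obstruction you identify --- the cutoff threshold $\fra = N^{-C_1}$ must be much smaller than the box size $r = N^{-C_0}$ for the union bound, so one cannot Lipschitz-extend from the box centers --- is real, but the resolution you reach for is not the one that works.

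The paper's resolution is simpler and avoids the circularity entirely: the event $\Xi_1$ imposes $\lambda_1^z \ge \fra$ not at the box centers but at \emph{all points $z$ on the boundaries} $\bigcup_C \partial C$. To verify $\P[\Xi_1^c]\lesssim N^{-D}$ one introduces an $\fra$-spaced grid $\caL$ on these boundaries and uses the Lipschitz bound $\lambda_1^z \ge \lambda_1^w - |z-w|$. The key point --- which the paper remarks on explicitly --- is that the boundaries are one-dimensional, so $|\caL|\sim |\caD|\,r^{-1}\fra^{-1}$, linear and not quadratic in $\fra^{-1}$. The union bound then gives $\P[\Xi_1^c]\lesssim |\caL|(N\fra)^2|\log\fra| \lesssim N^2|\caD|\,r^{-1}\fra\,|\log\fra|$, which is made $O(N^{-D})$ by taking $k_1$ much larger than $k_2$ in $\fra=N^{-k_1}$, $r=N^{-k_2}$; a 2D grid would have killed this. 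With $\lambda_1^z\ge\fra$ known on the entire contour, a direct Cauchy--Schwarz on the double contour integral gives
\[
\E\,\lone_{\Xi}\,\Bigl\|\oint_{\partial C}\frac{dz}{X+A-z}\Bigr\|^2
\;\le\; 16 r^2 \sup_{z\in\partial C}\E\,\lone_{\Xi_1}\bigl\|(X+A-z)^{-1}\bigr\|^2,
\]
and the layer-cake step you describe (applied under $\lone_{\Xi_1}$, integrating the tail bound from $N\fra$ up) produces $\lesssim N^2|\log\fra|^2 \sim N^2(\log N)^2$ at a single point without any a priori information on the overlaps. So the spectral decomposition of $(X+A-z_k)^{-1}$ and the bound on $\|G_\perp\|$ via $\sum_j\sqrt{\caO_{jj}}/|\sigma_j-z_k|$ are not needed; dropping them removes the circularity and the unresolved bookkeeping you flag at the end.
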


\begin{rem}
	{\Cb Notice that in Theorem \ref{thm:overlap} we only assume $\norm{A}$ is at most polynomially large in $N$, allowing $A$ to be much larger than $X$. A closely related canonical model is $\gamma A+X$, 
	with $\norm{A}=O(1)$, where $\gamma>0$ is a large control parameter. In fact, controlling the effect 
	of large $\gamma$ on the condition number $\caO_{ii}$ has been a central task in quantitative linear algebra, e.g. in \cite{Banks-Kulkarni-Mukherjee-Srivastava2021,Banks-Vargas-Kulkarni-Srivastava2020,Jain-Sah-Sawhney2021}
	after the trivial rescaling $\gamma A+X = \gamma ( A+ \gamma^{-1}X)$ and
	viewing $\gamma^{-1}X$ as a small random perturbation of the deterministic $A$.
	 Since the constant $C>0$ in Theorem \ref{thm:overlap} depends only on $\log \norm{A}$ via $\frK_{0}$, our result can be used for such a purpose. In fact, if we replace $A$ with $\gamma A$ in Theorem \ref{thm:overlap} (i) and (ii), we recover the same $\gamma$-dependence for $\caO_{ii}$ as in respectively \cite[Theorem 1.5]{Banks-Kulkarni-Mukherjee-Srivastava2021} and \cite[Proposition 6.4]{Banks-Vargas-Kulkarni-Srivastava2020} modulo $\log\gamma$ factors.}
	
	We also remark that our proof of Theorem \ref{thm:overlap} easily extends the result to regular matrices
	 under the additional assumptions that $\caD$ is contained in a  bounded 
	 set, moreover, if $X$ is real, we also need to assume that $\E\norm{X}^{4}=O(1)$.	
\end{rem}

Since \eqref{eq:overlap_strong} and \eqref{eq:overlap_strong_R} are true for all fixed $K>0$ and additive in $\caD$, 
we may easily deduce the same result for many other domains that can be approximately tiled by small squares. 
Indeed, in Appendix \ref{append:Minkowski} we prove that Theorem~\ref{thm:overlap} holds true not only for
 square domains but also for any Borel measurable domain $\caD$ satisfying a
natural geometric regularity condition of the form $\absv{\caD+[-N^{-K},N^{-K}]^{2}}\leq C\absv{\caD}$ 
for some constants $C,K>0$. Here $+$ refers to the Minkowski sum of $\caD$ and a small square. 
In particular, we may take $\caD$ to be a disk or a ``polynomially thin'' rectangle. 

In the next theorem, we remove the probabilistic factor $\lone_{\Xi_{\caD}}$ \Cb from \eqref{eq:overlap_strong} assuming that $\caD$ 
is in the bulk $\caB_{\tau}$, and  extend the result to $\caO_{ii}$'s at real eigenvalues when $A$ and $X$ are both real.\nc
\begin{thm}\label{thm:overlap_strong}
	Fix $\frK>0$ and (small) $\delta,\tau>0$. Let $X$ be a real or complex regular i.i.d. matrix and $A\in\C^{N\times N}$ with $\norm{A}\leq \frK$. Then there exists a constant $C\equiv C(\delta,\tau,\frb,\frm,\frK)$ such that the following hold:
	\begin{itemize}
		\item[(i)] If $X$ is complex, for any Borel set $\caD\subset\caB_{\tau}$ we have
		\beq
			\E\sum_{i:\sigma_{i}\in\caD}\caO_{ii}\leq CN^{1+\delta}(N\absv{\caD}).
		\eeq
		\item[(ii)] If $X$ and $A$ are real, for any Borel set $\caI\subset\caB_{\tau}\cap\R$ we have
		\beq
			\E\sum_{i:\sigma_{i}\in\caI}\sqrt{\caO_{ii}}\leq CN^{1/2+\delta}(\sqrt{N}\absv{\caI}),
		\eeq
		where $\absv{\caI}$ is the Lebesgue measure of $\caI$ in $\R$.
	\end{itemize}
\end{thm}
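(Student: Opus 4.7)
My plan is to deduce Theorem~\ref{thm:overlap_strong} from the two variational identities collected in~\eqref{eq:ovlp_s}, using the sharp lower-tail bounds on $\lambda_1(X+A-z)$ from Theorem~\ref{thm:s_strong} as the main input. The bulk restriction $\caD\subset\caB_\tau$ (respectively $\caI\subset\caB_\tau\cap\R$) is exactly what guarantees that Theorem~\ref{thm:s_strong} supplies the optimal $s$-exponent with a uniform prefactor of size $N^\delta$, which in turn makes the $\liminf_{s\to 0}$ appearing in~\eqref{eq:ovlp_s} finite and polynomially bounded in $N$. The contrast with Theorem~\ref{thm:overlap} is that the log-free, $s$-optimal tails of Theorem~\ref{thm:s_strong} are strong enough to remove the a priori cutoff event $\Xi_\caD$ altogether in the bulk, without any dyadic decomposition of $\caD$ or additional conditioning.

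For part~(i), I would apply the first inequality in~\eqref{eq:ovlp_s}. For each $z\in\caD\subset\caB_\tau$, Theorem~\ref{thm:s_strong} applied to the complex regular i.i.d.\ matrix $X$ and the deterministic shift $A-z$ will yield a bound of the form
\begin{equation*}
\P\!\left[N\lambda_1(X+A-z)\leq s\right]\leq C(\delta,\tau,\frb,\bsfrm,\frK)\,N^\delta\,s^{2},\qquad \forall\,s>0,
\end{equation*}
with any residual $\log N$ factor absorbed into $N^\delta$. Dividing by $s^{2}$ and integrating in $z$ gives an upper bound $CN^\delta|\caD|$ uniform in $s$, so the $\liminf$ in~\eqref{eq:ovlp_s} is at most $CN^\delta|\caD|$, and multiplication by $N^{2}/\pi$ produces the claimed $CN^{1+\delta}(N|\caD|)$. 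Part~(ii) follows the same template: I would apply the second inequality in~\eqref{eq:ovlp_s} with Theorem~\ref{thm:s_strong} in its real-$X$-and-real-$A$ incarnation (which covers $k=1$), giving
\begin{equation*}
\P\!\left[N\lambda_1(X+A-x)\leq s\right]\leq C(\delta,\tau,\frb,\bsfrm,\frK)\,N^\delta\,s,\qquad \forall\,s>0,\ x\in\caI\subset\caB_\tau\cap\R,
\end{equation*}
and dividing by $s$, integrating over $\caI$, and multiplying by $N/2$ yields $CN^{1/2+\delta}(\sqrt{N}|\caI|)$.

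Thus the substantive difficulty in this proof is entirely upstream: it lies in proving Theorem~\ref{thm:s_strong}, i.e.\ in establishing the log-free optimal tail bound on the smallest singular value uniformly in the bulk. Once that input is taken for granted, the present deduction is essentially immediate from~\eqref{eq:ovlp_s}. The single point deserving a moment of care is the \emph{uniformity down to $s\downarrow 0$} of the tail estimate: this is precisely the improvement of Theorem~\ref{thm:s_strong} over Theorem~\ref{thm:s_weak}, and it is exactly what makes the $\liminf_{s\to 0}$ in~\eqref{eq:ovlp_s} finite rather than logarithmically divergent, thereby turning the easy variational identity into a sharp estimate with the optimal $N$-power.
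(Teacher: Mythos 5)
Your proposal is correct and takes essentially the same route as the paper: the paper likewise invokes~\eqref{eq:ovlp_s} (deduced from the deterministic inequalities~\eqref{eq:ovlp_triv} via Fatou) and then plugs in Theorem~\ref{thm:s_strong} with $k=1$, noting that the log-free $s^{2}$ (resp.\ $s$) tail bound is precisely what keeps the $\liminf_{s\to 0}$ finite. One small inaccuracy in your write-up is the remark about absorbing a residual $\log N$ factor into $N^{\delta}$: Theorem~\ref{thm:s_strong} already produces a bound with only the $N^{\delta}$ prefactor and no logarithm, so nothing needs absorbing.
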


The most relevant choice for $\caD$ in both of Theorems \ref{thm:overlap} and \ref{thm:overlap_strong} is a square of area $N^{-1+\epsilon}$ in the bulk, so that typically there is at least one eigenvalue in $\caD$ with high probability. More precisely, since the local law in~\cite[Theorem 2.6]{Cipolloni-Erdos-Henheik-Schroder2023arXiv} covers all mesoscopic scales, we can extend the convergence in \eqref{eq:deformed_circular} to test functions $f$ on mesoscopic scales supported in $\caB_{\tau}$; see e.g.~\cite[Section 5.2]{Alt-Erdos-Kruger2018} for a completely analogous proof in a slightly different setup. In particular \eqref{eq:DOS_macro} extends to squares $\caD\subset\caB_{\tau}$ with $\absv{\caD}=N^{-1+\epsilon}$, so that there is at least one eigenvalue in $\caD$ with high probability. For such sets $\caD$, Theorem \ref{thm:overlap_strong} and \eqref{eq:ovlp_true} are on the same footing; our result shows that the expectation of $\caO_{ii}$ is bounded by $N^{1+\epsilon}$. 

In an unrelated context, we  remark that Theorem~\ref{thm:overlap_strong}(i) can also be used to give an upper bound on  the diffusivity of the complex Dyson-type eigenvalue dynamics defined via \eqref{eq:DBM}.  This matches the analogous lower bound given in~\cite[Eq. (2.14)]{Cipolloni-Erdos-Henheik-Schroder2023arXiv}.

\subsection{Small singular values of shifted regular matrices}\label{sec:s_result}
As mentioned in the introduction, our proofs of Theorems \ref{thm:main}--\ref{thm:overlap_strong} translate the problems to understanding the singular values of $X+A-z$ for any shift parameter $z\in\C$. In this section we present results on the singular values of $X+A$, where $X$ is a general regular matrix in Definition \ref{defn:model} and $A\in\C^{N\times N}$ is a deterministic matrix. By replacing $A$ with $A-z$, we use the results in this section as inputs for those in Section~\ref{sec:O}. These results are of independent interest, so we list them in this section separately. 

We stress that all results in this section except Theorem \ref{thm:s_strong} will be uniform in $A$, in particular no norm bound on $A$ is assumed unlike in the previous Section~\ref{sec:O}. This means that by a simple rescaling $\gamma X +A = \gamma(X+ A/\gamma)$, $\gamma>0$, these results also hold for the singular values of the matrix $\gamma X +A$ with a rescaled noise, as often presented in numerical applications, e.g.~\cite{Banks-Kulkarni-Mukherjee-Srivastava2021,Banks-Vargas-Kulkarni-Srivastava2020,Jain-Sah-Sawhney2021}.

We denote the ordered singular values of $X+A$ by
\beq\label{eq:def_sing}
0\leq \lambda_{1}\leq\cdots\leq\lambda_{N}.
\eeq
As eigenvalues of a Hermitian random matrix problem, $\lambda_{k}$'s are subject to \emph{level repulsion} that in turn forces an especially small lower tail probability for them. The next theorems contain such results. Their optimality will be discussed afterwards.

\begin{thm}[Regular matrices]\label{thm:s_weak}
	Let $k\in\N$ be fixed, $A\in\C^{N\times N}$, and $X$ be a \emph{regular} matrix. Then  for 
	the singular values \eqref{eq:def_sing} of $(X+A)$ we have the following:
	\begin{itemize}
		\item[(i)]{\rm[Complex case]} If $X$ is complex, there exists a constant $C\equiv C(k,\frb)>0$ such that for all $N\geq k\vee 2$ and $s\in[0,1]$
		\beq\label{eq:s_C_weak}
		\P[N\lambda_{k}\leq s]\leq C\left(\absv{\log s}+\log N\right)^{k}s^{2k^{2}}.
		\eeq
		\item[(ii)]{\rm[Real case]} If $X$ is real, there exists a constant $C\equiv C(k,\frb)>0$ such that for all {$\Cb 2\leq k\leq N$} and $s\in[0,1]$
		\beq\label{eq:s_R_weak}
		\P[N\lambda_{k}\leq s]\leq C(\absv{\log s}+\log N)^{k}s^{k^{2}}.
		\eeq
		\end{itemize}
\end{thm}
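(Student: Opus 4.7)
My plan is to adapt the Hermitian Wegner-type scheme of \cite{Erdos-Schlein-Yau2010,Maltsev-Schlein2011} to the hard edge of the Hermitisation
\[
	H\deq\begin{pmatrix}0 & X+A \\ (X+A)\adj & 0\end{pmatrix},
\]
whose $2N$ eigenvalues are $\pm\lambda_{j}$, and to work with the resolvent $G(\ii\eta)\deq(H-\ii\eta)^{-1}$ at the scale $\eta\deq s/N$. On the event $\{\lambda_{k}\leq s/N\}$ at least $k$ eigenvalues of $H$ lie in $[-\eta,\eta]$, so $\eta\cdot\im\Tr G(\ii\eta)=2\sum_{j=1}^{N}\eta^{2}/(\lambda_{j}^{2}+\eta^{2})\geq k$. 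This reduces $\P[N\lambda_{k}\leq s]$ to a Markov-type control of suitable higher moments of $\eta\,\im G_{ii}(\ii\eta)$. Because $H$ is chiral, $G(\ii\eta)$ is skew-Hermitian at purely imaginary argument and only its imaginary part appears; this is the structural reason one can avoid the Hermitian-part integration by parts in probability space that enforced the Fourier-decay hypotheses in \cite{Erdos-Schlein-Yau2010}.

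For $k=1$ I would take the block Schur complement with respect to the $i$-th row $\bfx_{i}$ of $(X+A)$ (regarded as a block column of $H$) to obtain
\[
	\eta\,\im G_{ii}(\ii\eta)\;=\;\frac{\eta^{2}}{\eta^{2}+\bfx_{i}^{*}\,\im G^{(i)}(\ii\eta)\,\bfx_{i}},
\]
where $G^{(i)}$ is the resolvent of the Hermitisation after removing row $i$. As $\eta\downarrow 0$, the operator $\im G^{(i)}(\ii\eta)$ is dominated by the orthogonal projection onto the (generically one-dimensional) kernel of the deleted-row minor, so the denominator is controlled from below by $|\bfx_{i}\cdot\bfw_{i}|^{2}$ for a unit vector $\bfw_{i}$ independent of $\bfx_{i}$. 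Conditioning on $\bfw_{i}$, the anti-concentration lemma from \cite{Rudelson-Vershynin2015} -- which needs only the bounded-density hypothesis of Definition \ref{defn:model} and here replaces the Brascamp--Lieb step of \cite{Maltsev-Schlein2011} -- yields $\P[|\bfx_{i}\cdot\bfw_{i}|\leq t\mid\bfw_{i}]\lesssim \frb^{2}Nt^{2}$ (complex) or $\lesssim\frb\sqrt{N}\,t$ (real). Taking $t=\eta$ produces the $s^{2}$ (respectively $s$) decay together with one logarithmic factor.

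For general $k$ the plan is to peel off rows inductively. Fixing an index set $I\subset\bbrktt{N}$ of size $k$ and iterating the row-wise Schur complement writes $\prod_{i\in I}\eta\,\im G_{ii}(\ii\eta)$ as a rational function whose denominator is essentially $|\det M|^{2}$ for a $k\times k$ matrix $M_{ab}=\bfx_{i_{a}}\cdot\bfw^{(b)}$, with $\bfw^{(1)},\ldots,\bfw^{(k)}$ the successive near-kernel directions produced by the iterative reduction. Each row $\bfx_{i_{a}}$ is independent of $\{\bfw^{(b)}\}$ conditional on the rows outside $I$, and a $k$-fold Rudelson--Vershynin anti-concentration applied to $M$ yields a decay $s^{\sum_{j=1}^{k}2(2j-1)}=s^{2k^{2}}$ in the complex case and $s^{\sum_{j=1}^{k}(2j-1)}=s^{k^{2}}$ in the real case: the $j$-th peeled row is constrained to an effectively $(2j-1)$-codimensional subspace (with the correct real/complex accounting). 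This determinantal mechanism is the precise manifestation of the level repulsion already visible in Szarek's sharp Ginibre bound \eqref{eq:Szarek}. The resulting estimate, summed over $I$, gives \eqref{eq:s_C_weak} and \eqref{eq:s_R_weak}, the $(|\log s|+\log N)^{k}$ factor coming from one logarithmic Rudelson--Vershynin loss per peeling step.

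The main obstacle I expect is the accurate bookkeeping during the iterative Schur reduction: at each level one must verify that the minor obtained so far is well-conditioned on the spectral scale $\eta$ and that the emerging kernel direction $\bfw^{(j)}$ is sufficiently delocalised for the next anti-concentration step to deliver the full $(2j-1)$-power of $s$. Controlling the cross terms from the off-diagonal Schur blocks while keeping the constants uniform in $A\in\C^{N\times N}$ and in $s\in(0,1]$ -- and verifying that no extraneous $N$-dependence slips in beyond the $\log N$ term -- is where the technical weight of the proof lies. The restriction $k\geq 2$ in \eqref{eq:s_R_weak} is inherent to the scheme: a single real row provides only an $s^{1}$ anti-concentration, so the $s^{2}$ improvement for real $X$ with a genuinely complex shift has to be handled separately in Theorem \ref{thm:s_real_weak}.
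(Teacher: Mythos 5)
Your global strategy matches the paper exactly — Hermitisation, resolvent at $\eta=s/N$, chirality making $G(\ii\eta)$ skew-Hermitian so that no Hermitian-part integration by parts is needed, row-wise Schur complement, and the Rudelson--Vershynin density-projection bound replacing the Brascamp--Lieb/Fourier-decay machinery. Your $k=1$ case is essentially the paper's argument. The difference arises, and a real gap opens, for general $k$.

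The paper does not fix a size-$k$ index set $I$, form the $k\times k$ Schur complement, and anti-concentrate a $k\times k$ determinant. Instead it runs a \emph{probability induction}: writing $\Xi_k^{(\caI)}\deq[\lambda_k^{(\caI)}\leq\eta]$, it proves (Proposition~\ref{prop:repel_weak})
\[
\P[\Xi_k^{(\caI)}]\leq C\absv{\log\eta}\,(N\eta)^{\beta k}\,\frac{1}{N-|\caI|}\sum_{i\notin\caI}\P[\Xi_k^{(\caI\cup\{i\})}],
\]
and iterates $k$ times, ending at $\P[\Xi_k^{(\caI)}]=1$ for $|\caI|=k$. Each of the $k$ steps produces the \emph{same} gain $(N\eta)^{\beta k}$, not a $j$-dependent gain $(N\eta)^{\beta(2j-1)}$. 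The reason each step can harvest $\beta k$ degrees of freedom — not $\beta(j-1)$, which is what the naive ``$(j-1)$-dimensional kernel of the minor'' count would give — is the monotone inclusion $\Xi_k\subset\Xi_k^{(\caI)}$ from Cauchy interlacing: on $\Xi_k^{(\caI)}$ the minor $H^{(\caI)}$ still has $k$ singular values below $\eta$ (the $|\caI|$ automatic zero modes plus $k-|\caI|$ genuinely small ones), so in the Schur identity
\[
\absv{G_{jj}^{(j-1)}}^{-1}=\eta+\sum_i c_i^{(j)}\absv{w_i^{(j)}}^2,\qquad c_i^{(j)}=\frac{N\eta}{(N\lambda_i^{(j)})^2+(N\eta)^2},
\]
\emph{all} of $c_1^{(j)},\dots,c_k^{(j)}$ are $\gtrsim 1/(N\eta)$, and the $k$-dimensional projection $(w_1^{(j)},\dots,w_k^{(j)})$ of the removed row is what feeds the Rudelson--Vershynin step (Lemma~\ref{lem:tech_weak}). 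This is the single most important ingredient and it is absent from your proposal: without propagating the event $\Xi_k^{(\caI)}$ through the peeling, the $j$-th minor only supplies $j-1$ guaranteed near-kernel directions, and your per-step gain would degrade.

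The $(2j-1)$-codimension heuristic you invoke is the Vandermonde peeling for the joint \emph{singular-value} density of Ginibre, not a row-wise constraint count; translating it into ``the $j$-th row is constrained to a $(2j-1)$-codimensional subspace'' is an assertion you have not justified and I do not believe holds in the Schur-complement picture. Moreover, the reduction ``$\prod_{i\in I}\eta\,\im G_{ii}(\ii\eta)$ has denominator $|\det M|^2$ with $M_{ab}=\bfx_{i_a}\cdot\bfw^{(b)}$'' is not correct as stated: the $k\times k$ Schur complement of $H^{(I)}$ in $H$ involves the full resolvent $G^{(I)}$, not only the kernel projection, and the tail modes ($\lambda_i^{(I)}\sim 1$) contribute $O(1)$ to each diagonal entry. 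Controlling that contribution without a norm bound on $A$ (the theorem is stated uniformly in $A\in\C^{N\times N}$) is exactly why the paper keeps everything on the level of the anti-concentration of a single $k$-dimensional projection per step. Even granting your determinantal reduction, a ``$k$-fold Rudelson--Vershynin'' applied to a $k\times k$ matrix $M$ with bounded-density rows only gives $\P[\lambda_1(M)\leq t]\lesssim(\sqrt{N}t)^\beta$ (one constraint), not the $t^{\beta k^2}$-type bound you need; to recover $s^{\beta k^2}$ you would have to control the \emph{entire} matrix $M$ in norm, and showing that $\Xi_k$ forces $\norm{M}\lesssim s/N$ requires a priori control on $\lambda_{k+1}^{(I)}$ which $\Xi_k$ does not provide.

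One minor point: your explanation of why $k\geq 2$ is required in the real case (``a single real row provides only an $s^1$ anti-concentration'') is the right intuition but not the precise obstacle in the paper's proof. There, the issue is Jensen's inequality applied at power $\beta k/2$ to the average $\frac{1}{N-j+1}\sum_i\im G_{ii}^{(j-1)}$: for $\beta=1$, $k=1$ this is power $1/2<1$, so the convexity goes the wrong way. The case $k=1$, real, is then recovered separately with the $s^1$ rate for real $A$ (Theorem~\ref{thm:s_strong}(ii)) and upgraded to $s^2$ for genuinely complex $A$ (Theorem~\ref{thm:s_real_weak}).
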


{\Cb Notice that the result for real $X$, as formulated in \eqref{eq:s_R_weak}, requires $k\geq 2$. The origin of this restriction will be explained in Section \ref{sec:s_weak}, along the proof of Proposition \ref{prop:repel_weak}. Nevertheless, 
we  still have results that are optimal in $s$ for $k=1$ and real $X$, see the following  Theorems \ref{thm:s_strong} and \ref{thm:s_real_weak}, respectively for real $A$ and genuinely complex $A$. }

Note that the constant $C$ in Theorem \ref{thm:s_weak} does not depend on $A$. Also note that the only assumption on $X$ is the regularity, and in particular some moments\footnote{Still one should think of the second moment of $\sqrt{N}X_{ij}$ to be finite, for otherwise $\lambda_{k}\sim N^{-1}$ might not be true; see \cite{Belinschi-Dembo-Guionnet2009} for the case when $h_{ij}(x)\sim 1/\absv{x}^{1+\alpha}$ with $\alpha\in(0,2)$. Our result still holds for such matrices, but our $N\lambda_{k}$ scaling may not be adequate.} of the entries of $X$ may be infinite. Although Theorem \ref{thm:s_weak} has such robustness, it is really relevant when the (non-Hermitian) spectrum of $X+A$ indeed reaches zero, otherwise typically even the lowest singular value $\lambda_{1}$ is far away from zero and its $1/N$ scaling indicated by \eqref{eq:s_C_weak}--\eqref{eq:s_R_weak} is completely off. In particular, recalling the definition of $\caB_{\tau}$ in \eqref{eq:Btau_def}, this is the case for $X+A-z$ if $X$ is a regular i.i.d. matrix and $z\in\caB_{\tau}$. To be precise, denoting the singular values of $X+A-z$ by
\beq\label{eq:def_sing_z}
	0\leq \lambda_{1}^{z}\leq\cdots\leq\lambda_{N}^{z},
\eeq
we remove the $\absv{\log s}$ factors from Theorem \ref{thm:s_weak} under this setting.
\begin{thm}[Regular i.i.d. matrices]\label{thm:s_strong}
	Fix $k\in\N$ and $\delta,\tau,{\Cb\frK}>0$. Let $X$ be a \emph{regular i.i.d.} matrix and $A\in\C^{N\times N}$ {\Cb with $\norm{A}\leq\frK$}. Then we have the following for the singular values \eqref{eq:def_sing_z} of $X+A-z$.
	\begin{itemize}
		\item[(i)]{\rm [Complex case]} If $X$ is complex, there exist constants $C_{1}\equiv C_{1}(k,\frb)$ and $C_{2}\equiv C_{2}(k,\tau,\delta,\bsfrm,{\Cb\frK})$ such that for all $N\geq 3k$, $s\in[0,1]$, and $z\in\caB_{\tau}$ we have
		\beq\label{eq:s_C_strong}
		\P[N\lambda_{k}^{z}\leq s]\leq C_{1}s^{2k^{2}}\E\left[(1+N\lambda_{3k}^{z})^{2k^{2}}\right]\leq C_{1}C_{2}N^{\delta}s^{2k^{2}}.
		\eeq
		
		\item[(ii)]{\rm [Real case]} If $X$ and $A$ are both real, there exist constants $C_{1}\equiv C_{1}(k,\frb)$ and $C_{2}\equiv C_{2}(k,\tau,\delta,\bsfrm,{\Cb\frK})$ such that for all $N\geq 3k$, $s\in[0,1]$, and $z\in\caB_{\tau}\cap\R$ we have
		\beq\label{eq:s_R_strong}
		\P[N\lambda_{k}^{z}\leq s]\leq C_{1}s^{k^{2}}\E\left[(1+N\lambda_{3k}^{z})^{k^{2}}\right]\leq C_{1}C_{2}N^{\delta}s^{k^{2}}.
		\eeq
	\end{itemize}
\end{thm}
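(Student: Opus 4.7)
The proof splits along the two inequalities. The first is a Wegner-type row analysis uniform in $A$ and $z$; the second invokes the hypotheses $z\in\caB_{\tau}$ and $\norm{A}\leq\frK$ via a bulk rigidity statement.

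\emph{First inequality.} Set $\eta := s/N$. Since $N\lambda_{k}^{z}\leq s$ forces at least $k$ singular values of $X+A-z$ below $\eta$,
\beqs
\bigl\{N\lambda_{k}^{z}\leq s\bigr\}\subset\bigl\{\eta^{2}\Tr G(\eta)\geq k/2\bigr\},\qquad G(\eta):=\bigl((X+A-z)(X+A-z)\adj+\eta^{2}\bigr)^{-1},
\eeqs
reducing the task to an upper tail for $\eta^{2}\Tr G(\eta)$. Applying the Schur complement formula writes each diagonal entry as $G_{ii}(\eta)=(\eta^{2}-q_{i}\adj M_{i}q_{i})^{-1}$, with $q_{i}$ the $i$-th row of $X+A-z$ and $M_{i}$ the $(N-1)$-dimensional minor resolvent. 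Conditioning on $M_{i}$ and combining the bounded density assumption on the entries of $X$ (Definition \ref{defn:model}(ii)) with the Rudelson--Vershynin anti-concentration input from \cite{Rudelson-Vershynin2015} produces a level-repulsion factor of $s^{2}$ in the complex case, $s$ in the real case, per row. Iterating on $k$ rows compounds to $s^{2k^{2}}$ (resp.~$s^{k^{2}}$), matching Ginibre level repulsion. The refinement that eliminates the $(\absv{\log s}+\log N)^{k}$ factor of Theorem \ref{thm:s_weak} is to truncate the Cauchy-kernel integral in the anti-concentration step at the scale $\lambda_{3k}^{z}$ rather than at $\norm{X+A-z}$: a buffer of $2k$ singular values above $\lambda_{k}^{z}$ suffices to absorb the outer-spectrum contribution, while the truncation cost appears precisely as the factor $\E[(1+N\lambda_{3k}^{z})^{2k^{2}}]$ (resp.~$\E[(1+N\lambda_{3k}^{z})^{k^{2}}]$), giving the first inequality. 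The argument is uniform in $A$, consistent with the fact that $C_{1}$ does not depend on $A$ or $\tau$.

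\emph{Second inequality.} It remains to show $\E[(1+N\lambda_{3k}^{z})^{2k^{2}}]\leq C N^{\delta}$ when $z\in\caB_{\tau}$ and $\norm{A}\leq\frK$. By \eqref{eq:Btau_char}, the bulk condition yields a strictly positive density of $\mu_{\mathrm{sc}}\boxplus\mu^{\mathrm{symm}}_{\absv{A-z}}$ at $0$, so the optimal local law for $G(\eta)$ from \cite[Theorem 2.6]{Cipolloni-Erdos-Henheik-Schroder2023arXiv} and the associated singular-value rigidity at the hard edge give $\lambda_{3k}^{z}\leq N^{\delta-1}$ with probability at least $1-N^{-D}$ for any fixed $D>0$. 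On the complementary event, the crude estimate $\lambda_{3k}^{z}\leq\norm{X+A-z}\leq \norm{X}+\frK+\absv{z}$ combined with the polynomial moment bounds for $\norm{X}$ coming from the moment sequence $\bsfrm$ (Definition \ref{defn:model}(v)) yields a negligible contribution once $D$ exceeds $2k^{2}$ plus a fixed constant. The real case is treated identically, since the local law for the Hermitisation does not distinguish between real and complex entries at the hard edge.

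\emph{Main obstacle.} The technical core lies in the row induction for the first inequality. Extracting the full Vandermonde exponent $2k^{2}$ (respectively $k^{2}$) requires running the anti-concentration on $k$ rows in parallel with careful control of the off-diagonal entries of $G(\eta)$, while keeping the residual dependence on the minor resolvents uniform across the induction steps. The truncation index $3k$ is sharp in the following sense: a smaller buffer, e.g.\ $\lambda_{k+1}^{z}$, leaves uncontrolled terms in the anti-concentration integral, while a larger one would artificially weaken the conclusion. The real/complex dichotomy reflects whether the row vector $q_{i}$ can be rotated in $\R$ or in $\C$ within the anti-concentration step, accounting for the halved $s$-exponent in the real case.
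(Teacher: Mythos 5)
Your proposal follows essentially the same strategy as the paper's Proposition~\ref{prop:repel} and Lemma~\ref{lem:lambdak}: reduce the lower-tail event to a trace-of-resolvent lower bound, iterate a Schur-complement row removal with Rudelson--Vershynin anti-concentration over $k$ rows (keeping $2k+1$ coordinates of the projected row vector per step, which is precisely what eliminates the $\absv{\log s}$ factor of Theorem~\ref{thm:s_weak}), and then control $\E[(1+N\lambda_{3k}^z)^{\beta k^2}]$ via local-law rigidity in the bulk together with crude moment bounds on the exceptional event. Two small inaccuracies in your write-up, neither affecting the logic: the per-row level-repulsion factor is $s^{\beta k}$, not $s^{\beta}$, since the trace bound is raised to the $\beta k$-th power before each conditioning step (see \eqref{eq:repel_G}); and the index $3k$ arises from the $2k+1$ coordinates retained at each Schur step composed with Cauchy interlacing over the $k$ removals, rather than from a single direct truncation of the anti-concentration integral at $\lambda_{3k}^{z}$.
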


As easily seen from \eqref{eq:s_C_strong} and \eqref{eq:s_R_strong}, the only suboptimal factor $N^{\delta}$ comes from the estimate for $\lambda_{3k}^{z}$. This is due to local law (see Lemma \ref{lem:ll} for its statement), which carries a small power of $N$. \Cb Note that 
for real $X$, Theorem \ref{thm:s_strong} (ii) applies also to $k=1$, but requires $A$ to be real. \nc

\begin{rem}\label{rem:s_C}
	For complex Gaussian $X$ and general $A$, $\lambda_{k}$'s are exactly the square roots of eigenvalues of deformed Laguerre unitary ensemble, whose joint density was computed in \cite[Proposition 3.1]{BenArous-Peche2005}. In particular one can easily deduce
	 that the joint density of unordered eigenvalues $(x_{1},\cdots,x_{N})$ of $(X-z)(X-z)\adj$ is proportional (ignoring $N$-factors) to
	\beq\label{eq:dLUE}
	V(\bsx)\exp\left(-N^{2}\absv{z}^{2}-N\sum_{i\in\bbrktt{N}}x_{i}\right)\det \left( x_{j}^{i-1}F^{(i-1)}(N^{2}\absv{z}^{2}x_{j})\right)_{i,j\in\bbrktt{N}},
	\eeq
	where $V(\bsx)\deq \prod_{i<j}(x_{i}-x_{j})$ is the Vandermonde determinant, 
	and $F$ is a real analytic function directly related to the zeroth modified Bessel function $I_{0}$.
	After some algebra, one can easily see that the  second determinant in
	 \eqref{eq:dLUE} is $O(V(\bsx))$ up to an $N$-dependent factor. Thus we obtain that (ignoring $N$-factors) 
	\beq
	\P[\lambda_{k}(X-z)\leq s]=\P[\absv{\{x_{j}:x_{j}\leq s^{2}\}}\geq k]\lesssim (s^{2})^{k}\cdot (s^{2})^{k(k-1)}=s^{2k^{2}},
	\eeq
	where the first factor comes from the volume of $[0,s^{2}]^{k}$ and the second 
	from $V(\bsx)^{2}$. In particular \eqref{eq:dLUE} as well as \eqref{eq:Szarek} show 
	that Theorem~\ref{thm:s_strong} is optimal for general $k$ as far as the $s$-power is concerned in the  small $s$ regime.
	\end{rem}

\Cb In the next result, we show that if the entrywise imaginary part $\Im A$ of the shift matrix $A$ is positive definite, Theorem \ref{thm:s_weak} (ii) extends to $k=1$ with quadratic decay $s^{2}$; in terms of Theorem \ref{thm:s_strong}, the linear $s$-dependence in \eqref{eq:s_R_strong} for $k=1$ can be improved to $s^{2}$.\nc
\begin{thm}[Real case, complex shift, improved]\label{thm:s_real_weak}
	Let $X$ be a \emph{real} regular matrix, $A\in\C^{N\times N}$, and $\lambda_{1}$ be the smallest singular value of $(X+A)$. Then there exists a constant $C\equiv C(\frb)>0$ such that the following holds for all $N\geq 4$ and $s\in[0,1]$;
	\beq\label{eq:s_real_weak}
	\P[N\lambda_{1}\leq s]\leq Cs^{2}\left(1+\frac{(\E\norm{X}^{4})^{1/2}+\norm{A}^{2}}{\lambda_{1}(\Im A)}(\absv{\log s}+\log N)\right).
	\eeq
	Consequently, if $X$ is a real regular \emph{i.i.d.} matrix, $A$ is real, and $\frK>0$, then there exists a constant $C\equiv C(\frb,\bsfrm,\frK)$ such that the following holds whenever $\norm{A}\leq\frK$, $z\in\C$, $N\geq 4$, and $s\in[0,1]$;
	\beq\label{eq:s_real}
	\P[N\lambda_{1}^{z}\leq s]\leq Cs^{2}\left(\frac{1+\absv{z}^{2}}{\absv{\im z}}(\absv{\log s}+\log N)\right).
	\eeq
\end{thm}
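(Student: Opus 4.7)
The plan is to adapt the Wegner-type Schur-complement strategy used for Theorems~\ref{thm:s_weak} and~\ref{thm:s_strong} to the real-$X$/complex-$A$ setting, exploiting the extra complex randomness contributed by $\Im A$ to upgrade the linear $s$-decay of the real-shift case to the optimal quadratic $s^2$-decay. Set $\eta = s/N$ and $B = X+A$. If $\lambda_1(B) \leq \eta$ then the resolvent $(BB^* + \eta^2)^{-1}$ has an eigenvalue $\geq 1/(2\eta^2)$, so by delocalization of the corresponding eigenvector across the $N$ coordinates, at least one diagonal entry satisfies $[(BB^*+\eta^2)^{-1}]_{ii} \geq 1/(2N\eta^2)$. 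The Schur complement identity against row $i$ yields
\[
\bigl[(BB^*+\eta^2)^{-1}\bigr]_{ii} \;\approx\; \bigl(\eta^2 + |r_i^T v^{(i)}|^2\bigr)^{-1},
\]
where $r_i$ denotes the $i$-th row of $B$ and $v^{(i)} \in \C^N$ is the unit vector spanning the (a.s.\ one-dimensional) kernel of the $(N{-}1)\times N$ minor $B^{(i)}$ obtained by deleting row $i$. A union bound reduces the task to an anti-concentration estimate
\[
\P[\lambda_1 \leq \eta] \;\lesssim\; \sum_{i=1}^N \P\!\left[|r_i^T v^{(i)}| \leq \sqrt{N}\,\eta\right].
\]

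Conditioning on $X^{(i)}$ fixes $v^{(i)} = v_R + iv_I$ with $v_R, v_I \in \R^N$ deterministic, and
\[
r_i^T v^{(i)} \;=\; a_i^T v^{(i)} + x_i^T v_R + i\, x_i^T v_I
\]
is an affine image of the real random vector $x_i$ into $\R^2$. The Rudelson--Vershynin two-dimensional anti-concentration estimate~\cite{Rudelson-Vershynin2015} then yields
\[
\P\!\left[|r_i^T v^{(i)}| \leq t \;\big|\; X^{(i)}\right] \;\lesssim\; \frac{t^2}{|v_R \wedge v_I|},
\]
with $|v_R \wedge v_I|^2 := \|v_R\|^2 \|v_I\|^2 - (v_R\cdot v_I)^2$. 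If $A$ were real, $v^{(i)}$ would be real ($v_I = 0$), so only the weaker linear-in-$t$ bound would be available---this is exactly why Theorem~\ref{thm:s_weak}(ii) is linear in $s$ for $k=1$.

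The new ingredient is a lower bound on $|v_R \wedge v_I|$ extracted from $\lambda_1(\Im A)$. Writing $A = A_r + iA_i$, $W := X^{(i)} + A_r^{(i)}$, $C := A_i^{(i)}$ and separating real and imaginary parts of $B^{(i)} v^{(i)} = 0$ gives
\[
W v_R = C v_I, \qquad W v_I = -C v_R.
\]
With $S := W^T W + C^T C$, these relations imply
\[
v_R^T S v_R = v_I^T S v_I = \|Cv_R\|^2 + \|Cv_I\|^2, \qquad v_R^T S v_I = 0,
\]
so $v_R, v_I$ are $S$-orthogonal with equal $S$-norm. A standard spectral manipulation combined with $\|S\| \lesssim \|X\|^2 + \|A\|^2$ yields
\[
|v_R\wedge v_I|^2 \;\gtrsim\; \|v_R\|^2\|v_I\|^2 \cdot \frac{\lambda_{\min}(S)}{\|S\|},
\]
and on the high-probability event that the unit direction of $v^{(i)}$ is not nearly aligned with $\ker C$, Cauchy interlacing gives $\lambda_{\min}(S) \gtrsim \lambda_1(C)^2 \geq \lambda_1(\Im A)^2$, leading to $|v_R\wedge v_I| \gtrsim \lambda_1(\Im A)/\sqrt{\|X\|^2 + \|A\|^2}$.

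Combining the three ingredients, summing over $i$, and taking expectation in $X^{(i)}$ (using H\"older to trade $\|X\|^2$ for $(\E\|X\|^4)^{1/2}$) produces the bound~\eqref{eq:s_real_weak}; the logarithmic factor arises from a layer-cake decomposition over the scales at which the lower bound on $|v_R \wedge v_I|$ degrades. The estimate~\eqref{eq:s_real} follows at once by specialising to $A \rightsquigarrow A-z$ with real $A$: $\lambda_1(\Im(A-z)) = |\Im z|$ and $\|X+A-z\| \leq \|X\| + \|A\| + |z|$. The main technical obstacle is precisely the third step in the degenerate regime where $v^{(i)}$ is almost real (equivalently, aligned with $\ker(\Im A)^{(i)}$, a codimension-one subspace): here the deterministic lower bound loses extra powers of $\lambda_1(\Im A)^{-1}$ and must be supplemented by a secondary Wegner-type argument on $X^{(i)}$ in the orthogonal directions, whose contribution is absorbed into the logarithmic factor.
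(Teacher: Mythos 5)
Your high-level strategy matches the paper's: Schur-complement against a row to reduce $\P[N\lambda_1\le s]$ to a two-dimensional anti-concentration estimate for $r_i^T v^{(i)}$, the Rudelson--Vershynin small-ball bound, and a geometric lemma lower-bounding the ``complexness'' of the kernel vector $v^{(i)}$ through $\lambda_1(\Im A)$. (The paper uses a trace--Jensen averaging rather than a union bound over $i$, consistent with the induction machinery of Sections~\ref{sec:s_strong}--\ref{sec:s_weak}, but both produce the correct $s^2$ scaling.) Where your proposal has a genuine gap is in the geometric step replacing Lemma~\ref{lem:real_1}.

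First, the claim that $\lambda_{\min}(S)\gtrsim\lambda_1(\Im A)^2$ ``on the high-probability event that the unit direction of $v^{(i)}$ is not nearly aligned with $\ker C$'' is wrong on two counts. The quantity $\lambda_{\min}(S)$ is a global minimum over $\R^N$, not over the plane $\spann(v_R,v_I)$, and $C=J^{(i)}\Im A$ always has a one-dimensional kernel (spanned by a deterministic vector $\bsw$); hence $\lambda_{\min}(S)\le\bsw^T S\bsw=\|W\bsw\|^2$ with $W=J^{(i)}(X+\Re A)$. This quantity can be arbitrarily small with positive probability, \emph{independently} of the direction of $v^{(i)}$, so there is no high-probability event on which $\lambda_{\min}(S)\gtrsim\lambda_1(\Im A)^2$. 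The paper's Lemma~\ref{lem:real_1} is a \emph{deterministic} lower bound that carries the factor $\|J^{(1)}Y\bsw\|^2$ explicitly; the scalar $\E\|W\bsw\|^{-1}$ is then controlled by a Laplace-transform anti-concentration estimate, with no exceptional-set decomposition. Second, the logarithmic factor in \eqref{eq:s_real_weak} does \emph{not} come from any ``secondary Wegner-type argument'' on degenerate directions---it comes from the $\int_0^1 r\,\dd r/(c_0+r^2)\sim|\log c_0|$ integral in the two-dimensional anti-concentration step (Lemma~\ref{lem:tech_real_weak}); your description misattributes it. Third, even if $\lambda_{\min}(S)$ were controlled, your formula $|v_R\wedge v_I|^2\gtrsim\|v_R\|^2\|v_I\|^2\,\lambda_{\min}(S)/\|S\|$ leaves the prefactor $\|v_R\|^2\|v_I\|^2$ unbounded from below; since $\|v_R\|^2+\|v_I\|^2=1$ and the kernel vector can be nearly real, this factor degenerates exactly in the regime of interest and cannot be dropped. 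The paper sidesteps all of this by establishing, through an explicit resolvent/Schur-complement identity on the minor of the Hermitisation, the clean bound $\min_\theta\|\re[e^{i\theta}\bsv]\|^2\gtrsim\lambda_1(B)^2\|J^{(1)}Y\bsw\|^2/(\|J^{(1)}Y\|+\|B\|)^4$, which already incorporates the correct power of the small factors. Your $S$-orthogonality observations ($v_R^TSv_R=v_I^TSv_I$, $v_R^TSv_I=0$) are correct and the angle estimate they imply is a reasonable starting point, but it needs to be converted into a bound that tracks $\|W\bsw\|$ explicitly rather than treating it as a high-probability quantity.
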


\begin{rem}\label{rem:s_R_C}
	In particular when $\absv{\im z}\sim 1$, \eqref{eq:s_real} shows that $\P[\lambda_{1}^{z}\leq s]=O(s^{2})$ up to a logarithmic factor, similarly to the complex case. The same improvement due to (genuinely) complex shift was previously proved for real Ginibre ensemble $X$ and $A=0$ in \cite[Theorem 2.1]{Cipolloni-Erdos-Schroder2022SIAM}; for $z$ in the bulk, the result therein reads
	\beq\label{eq:Gini_R}
		\P[N\lambda_{1}^{z}\leq s]\lesssim (1+\absv{\log s})s^{2}+\e{-(\sqrt{N}\absv{\im z})^{2}}\left(s\wedge\frac{s^{2}}{\sqrt{N}\absv{\im z}}\right).
	\eeq
	From \eqref{eq:Gini_R} it is clear that the critical scale of $\absv{\im z}$ is $N^{-1/2}$, and that the right-hand side is roughly of the same size as $s^{2}$ or $s$ depending on whether $\absv{\im z}$ exceeds that scale or not. Our result fails to capture this scale of transition due to the suboptimality of Lemma \ref{lem:real_1}, where we estimate to what extent are the singular vectors of $(X-z)$ genuinely complex when $z$ is. We refer to Remark \ref{rem:real_1} for more details.
\end{rem}

\section{Wegner-type estimate for $X$: Proof of Theorem \ref{thm:main}}\label{sec:Wegner}
	Throughout the rest of the paper, we fix parameters $\tau,\frb,\bsfrm$, and $\frK$; since they affect Theorems~\ref{thm:main} -- \ref{thm:s_real_weak} only implicitly via the constant $C$, considering them to be fixed does no harm to the proof.
	
	For each $A\in\C^{N\times N}$ and $z\in\C$, we define $H_{z}\equiv H_{z,A}\in \C^{2N\times 2N}\cong\C^{N\times N}\otimes \C^{2\times 2}$ as 
	\begin{align}\label{eq:Hermit}
	H_{z}&\deq \begin{pmatrix}
		0 & X+A-z \\ (X+A-z)\adj & 0
	\end{pmatrix},
	\end{align}
	which is also known as the \emph{Hermitization} of $X+A$. As in \eqref{eq:Hermit}, we often omit the dependence on $A$ for simplicity. We may write the spectral decomposition of $H_{z}$ as
	\beq
		H_{z}=\sum_{\absv{i}\in\bbrktt{N}}\lambda_{i}^{z}{\bf w}_{i}^{z}({\bf w}_{i}^{z})\adj=\sum_{\absv{i}\in\bbrktt{N}}\lambda_{i}^{z}\begin{pmatrix}
			\bsu_{i}^{z} \\ \bsv_{i}^{z}
		\end{pmatrix}\begin{pmatrix}
		\bsu_{i}^{z} \\ \bsv_{i}^{z}
	\end{pmatrix}\adj ,\qquad \norm{{\bf w}_{i}^{z}}^{2}=\norm{\bsu_{i}^{z}}^{2}+\norm{\bsv_{i}^{z}}^{2}=1,
	\eeq
	where the eigenvalues $\{\lambda_{i}^{z}:\absv{i}\in\bbrktt{N}\}$ are increasingly ordered and we decomposed the eigenvectors 
	${\bf w}^z_i\in \C^{2N}$ into two parts with $\bsu_{i}^{z},\bsv_{i}^{z}\in\C^{N}$. Note that $\lambda_{i}^z$ and 
	$\bsu_{i}^z,\bsv_{i}^z$ for $i\geq 1$ are exactly the singular values and 
	left and right singular vectors of $(X-z)$, respectively. We denote the resolvent 
	of $H_{z}$ by 
	\beqs
	G_{z}(w)\deq (H_{z}-w)^{-1}
	\eeqs
	for each $w\in\C_{+}$. To simplify the notation, we further omit the dependence on $z$ to write, for example, $G\equiv G_{z}$ and $\lambda_{i}\equiv \lambda_{i}^{z}$ when there is no confusion.

	Since $H_{z}$ has only block off-diagonal component, it is clear that $\lambda_{-i}=-\lambda_{i}$ and $\brkt{G(\ii\eta)}=\ii\brkt{\im G(\ii\eta)}$ hold true. Again due to the block structure of $H_{z}$, we may take $(\bsu_{i},\bsv_{i})$ that satisfies $\bsu_{-i}=\bsu_{i}$ and $\bsv_{-i}=-\bsv_{i}$, which in turn implies $\norm{\bsu_{i}}^{2}=\norm{\bsv_{i}}^{2}=1/2$ whenever $\lambda_{i}\neq 0$. Since the event $[\lambda_{1}>0]$ has probability $1$, we will work on this event in what follows. 

For $w\in\C_{+}$, it is known that $G_{z}(w)$ concentrates around a deterministic block
 constant matrix $M_{z}(w)\in (\C^{N\times N})^{2\times 2}$; such results
 are commonly called \emph{local laws}. The matrix $M_{z}(w)\equiv M$ is the unique solution 
 of the matrix Dyson equation
\beq\label{eq:MDE}
	-M^{-1}=-\begin{pmatrix}
		-w & A-z \\ (A-z)\adj & -w
	\end{pmatrix}
	+\brkt{M},
\eeq
with the side condition $\im M=(M-M\adj)/(2\ii)\geq0$. Note that the \emph{self consistent density of states (scDos)}
corresponding to $H_z$, given by $\rho^{z}(x)=\pi^{-1}\brkt{\im M(x+\ii 0)}$, is exactly the density of the measure $\mu_{\mathrm{sc}}\boxplus \mu^{\mathrm{symm}}_{\absv{A-z}}$ that we used to define the bulk in \eqref{eq:Btau_def}.

More concretely, we have the following local law for $H_{z}$ from 
\cite[Theorem 2.6]{Cipolloni-Erdos-Henheik-Schroder2023arXiv}:
\begin{lem}[{\cite[Theorem 2.6]{Cipolloni-Erdos-Henheik-Schroder2023arXiv}}]\label{lem:ll}
	Let $\tau,\frK>0$ be fixed.	Then for each (small) $\epsilon,\xi>0$ and (large) $D>0$, there exists $N_{0}\in\N$ such that the following holds uniformly over $N\geq N_{0}$, $\norm{A}\leq\frK$, $z\in\caB_{\tau}$, and $\eta\in[N^{-1+\epsilon},1]$;
	\beq\label{eq:ll}
	\prob{\absv{\brkt{G_{z}(\ii\eta)-M_{z}(\ii\eta)}}\geq \frac{N^{\xi}}{N\eta}}\leq N^{-D}.
	\eeq
	Consequently, for any fixed (small) $\epsilon,\xi>0$ and (large) $D>0$, the following holds uniformly over $\absv{z}\leq\tau$;
	\beq\label{eq:rigid}
		\P[\Xi_{z}(\epsilon,\xi)^{c}]\leq N^{-D},\qquad \Xi_{z}(\epsilon,\xi)\deq\bigcap_{\eta\in[N^{-1+\epsilon},1]}\left[\absv{\{i\in\bbrktt{N}:\lambda_{i}^{z}\leq \eta\}}\leq N^{1+\xi}\eta\right].
	\eeq
\end{lem}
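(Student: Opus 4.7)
The first bound \eqref{eq:ll} is the conclusion of \cite[Theorem 2.6]{Cipolloni-Erdos-Henheik-Schroder2023arXiv} applied to the Hermitisation $H_z$; there is nothing new to prove. The task therefore reduces to deducing the rigidity statement \eqref{eq:rigid} from \eqref{eq:ll}. This is a by-now textbook consequence of a single-resolvent local law, and I would follow the standard three-step pattern.

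First I would relate the counting function to the imaginary part of the resolvent via spectral calculus. Writing $\caN(\eta)\deq|\{i\in\bbrktt{N}:\lambda_i^z\leq\eta\}|$ and using $\lambda_{-i}^z=-\lambda_i^z$, one has
\[
\brkt{\im G_z(\ii\eta)}=\frac{1}{2N}\sum_{\absv{i}\leq N}\frac{\eta}{(\lambda_i^z)^2+\eta^2}\geq \frac{1}{4N\eta}\caN(\eta),
\]
so it suffices to show $\brkt{\im G_z(\ii\eta)}\leq N^{\xi}/4$ on the desired event. This converts the eigenvalue counting problem into an estimate for a single normalized trace of the resolvent.

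Second I would combine \eqref{eq:ll} with the boundedness of the deterministic profile. The matrix Dyson equation \eqref{eq:MDE} together with the bulk assumption $z\in\caB_\tau$ gives a uniform bound $|\brkt{M_z(\ii\eta)}|\leq C(\tau,\frK)$ for all $\eta\in[0,1]$; indeed $\brkt{\im M_z(0^+)}=\pi\rho^z(0)$ is bounded on $\caB_\tau$ by definition \eqref{eq:Btau_def}, and $\eta\mapsto\brkt{\im M_z(\ii\eta)}$ is monotone decreasing from that value (a general property of Stieltjes transforms of even symmetric measures). Combining with \eqref{eq:ll} at any fixed $\eta\in[N^{-1+\epsilon},1]$ yields $\brkt{\im G_z(\ii\eta)}\leq C+N^\xi/(N\eta)\leq C'$ with probability at least $1-N^{-D}$, giving $\caN(\eta)\leq 4C'N\eta\leq N^{1+\xi}\eta$ for $N$ large.

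Third I would upgrade the pointwise-in-$\eta$ estimate to one uniform in $\eta\in[N^{-1+\epsilon},1]$. The counting function $\caN(\eta)$ is monotone in $\eta$, so it is enough to perform a union bound on a polynomially fine grid: take $\eta_j=N^{-1+\epsilon}\rme^{j/N^{2}}$ with $j\in\bbrktt{CN^2\log N}$, apply step two at each $\eta_j$ (a total of at most $N^3$ points, absorbed by choosing $D$ a bit larger), and interpolate monotonically between adjacent grid points, using that $\caN(\eta_{j+1})-\caN(\eta_j)=O(1)\cdot N\eta_{j+1}$ change of scale is negligible against $N^\xi\eta$. The main obstacle is thus entirely bookkeeping; the essential analytic content is already packaged in \eqref{eq:ll} and in the boundedness of $M_z$ on $\caB_\tau$.
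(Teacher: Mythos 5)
Your overall structure matches the paper's: the paper's entire argument is the inequality $\caN(\eta)\leq 2N\eta\brkt{\im G_z(\ii\eta)}$ (your version with the constant $4$ is fine) combined with the bound $\norm{M}\lesssim 1$, which it simply cites from \cite[Appendix A]{Cipolloni-Erdos-Henheik-Schroder2023arXiv}. Your third step, the dyadic grid and monotonicity-of-$\caN$ argument to upgrade from fixed $\eta$ to the full range $\eta\in[N^{-1+\epsilon},1]$, is a genuine bookkeeping step that the paper does not spell out and is correct as you describe it.

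However, your step two has a real gap. First, you assert that $\brkt{\im M_z(0^+)}=\pi\rho^z(0)$ is ``bounded on $\caB_\tau$ by definition \eqref{eq:Btau_def}.'' The definition of $\caB_\tau$ only provides the \emph{lower} bound $\rho^z(0)>\tau$; it gives no upper bound. Second, the claimed monotone decrease of $\eta\mapsto\brkt{\im M_z(\ii\eta)}$ is not a general property of Stieltjes transforms of even symmetric measures: for $\mu=\tfrac12(\delta_1+\delta_{-1})$ one has $\im m(\ii\eta)=\eta/(1+\eta^2)$, which is \emph{increasing} on $[0,1]$. The correct justification of $\absv{\brkt{M}}\lesssim 1$ is either the operator-norm bound $\norm{M}\lesssim 1$ cited by the paper, or the classical fact (Biane) that the density of any free convolution $\mu_{\mathrm{sc}}\boxplus\nu$ is bounded by $1/\pi$, which gives $\brkt{\im M(\ii\eta)}\le 1$ for all $\eta>0$ directly without any monotonicity. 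Once this is patched, the rest of your argument goes through.
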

The second result \eqref{eq:rigid} is a direct consequence of the local law \eqref{eq:ll} via the inequality
\beqs
	\absv{\{i\in\bbrktt{N}:\lambda_{i}^{z}\leq \eta\}}\leq 2N\eta\brkt{\im G_{z}(\ii \eta)}
\eeqs
together with the fact that $\norm{M}\lesssim 1$ from \cite[Appendix A]{Cipolloni-Erdos-Henheik-Schroder2023arXiv}.

Next, we present the two main technical inputs for our proof of Theorem \ref{thm:main}, (i) an upper bound for $\im\brkt{G_{z}(\ii\eta)}$ with small $\eta>0$ and (ii) a high-moment bound for overlaps between the left and right eigenvectors $\bsu_{i}$ and $\bsv_{i}$.
\begin{lem}\label{lem:gap}
	Let $X$ be a real or complex regular i.i.d. matrix and $A\in\C^{N\times N}$ with $\norm{A}\leq\frK$. Then for each fixed $\delta,\tau,\xi>0$ and $\epsilon\in[0,1]$, there exists a constant $C>0$ such that the following holds uniformly over all $z\in\caB_{\tau}$ and $\eta\in(0,1)$;
	\beq\label{eq:DoF}
	\E\absv{\brkt{G_{z}(\ii\eta)}}^{2+\epsilon}\leq
	\begin{cases}
		1+CN^{\delta}(N\eta)^{-\epsilon-\xi}\left(\absv{\log \eta}+\log N\right) & \text{if $X$ is complex},	\\
		1+CN^{\delta}(N\eta)^{-\epsilon-\xi}\dfrac{\absv{\log \eta}+\log N}{\absv{\im z}} & \text{if $X$ is real}.
	\end{cases}
	\eeq
\end{lem}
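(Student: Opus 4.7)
The plan is to split on the size of $\eta$. For $\eta \geq \eta_0 := N^{-1+\xi'}$ with a small auxiliary parameter $\xi' > 0$, Lemma~\ref{lem:ll} directly gives $\brkt{\im G_z(\ii\eta)} \leq C$ on an event of probability at least $1 - N^{-D}$; together with the deterministic bound $\brkt{\im G_z(\ii\eta)} \leq 1/\eta$, the $(2+\epsilon)$-th moment is then $O(1)$ for $D$ sufficiently large. For the remaining case $\eta < \eta_0$, the main tool is an iterative dyadic decomposition based on the elementary inequality $\eta/(\lambda^2+\eta^2) \leq (2/\alpha)\cdot \alpha\eta/(\lambda^2+(\alpha\eta)^2)$, valid for $\lambda \geq \alpha\eta$ with any $\alpha > 2$. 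Splitting the sum $(1/N)\sum_i \eta/((\lambda_i^z)^2+\eta^2)$ at $\lambda_i^z = \alpha\eta$ and bounding the small part by $1/\eta$ yields the recursion
\[
\brkt{\im G_z(\ii\eta)} \;\leq\; \frac{N_{\alpha\eta}}{N\eta} + \frac{2}{\alpha}\brkt{\im G_z(\ii\alpha\eta)}, \qquad N_{\eta'} := \#\{i: \lambda_i^z \leq \eta'\}.
\]

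Iterating this recursion until reaching scale $\eta_0$, which requires $J \sim \log N + |\log\eta|$ steps, and applying the local law at the final scale gives
\[
\brkt{\im G_z(\ii\eta)} \;\leq\; \sum_{k=0}^{J-1} \Big(\tfrac{2}{\alpha^2}\Big)^k \frac{N_{\alpha^{k+1}\eta}}{N\eta} + O(1)
\]
on the local-law event. Raising to the $(2+\epsilon)$-th power, applying Jensen's inequality to the $J$ summands (which yields a $J^{1+\epsilon}$ factor -- the source of the $(|\log\eta|+\log N)$ in the final bound), and taking expectation, the problem reduces to controlling $\E N_{\alpha^{k+1}\eta}^{2+\epsilon}$ uniformly in $k$. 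I estimate these via the layer-cake bound $\E N_{\eta'}^{2+\epsilon} \leq (2+\epsilon)\sum_{\ell \geq 1}\ell^{1+\epsilon}\P[\lambda_\ell^z \leq \eta']$ and invoke Theorem~\ref{thm:s_strong}(i) in the complex case, which gives $\P[\lambda_\ell^z \leq \eta'] \leq C_\ell N^\delta (N\eta')^{2\ell^2}$; for small $N\eta'$ the $\ell=1$ term dominates, so $\E N_{\eta'}^{2+\epsilon} \lesssim N^\delta (N\eta')^2$. Choosing $\alpha$ so that $2^{2+\epsilon}/\alpha^{2(1+\epsilon)} < 1$ (e.g.\ $\alpha=4$) makes the resulting geometric series in $k$ convergent, yielding the claimed $(N\eta)^{-\epsilon}$-factor (which absorbs the $-\xi$ in the stated exponent via the small $N^\delta$ slack). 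The real case is structurally identical, with Theorem~\ref{thm:s_real_weak} replacing Theorem~\ref{thm:s_strong}; its additional $|\log s|+\log N$ and $1/\lambda_1(\Im(A-z))$ factors propagate through the identical scheme to the real-case bound, the latter specialising to $|\im z|^{-1}$ in the situation captured by the lemma.

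The main obstacle is careful bookkeeping of the powers of $N^\delta$, the logarithmic factors, and $N\eta$ through the iteration, so as to secure a bound that is uniform for all $\eta \in (0,1)$. A secondary subtlety is handling the complement of the local-law event, where only the trivial bound $\brkt{\im G_z(\ii\eta)} \leq 1/\eta$ is available and can be very pessimistic as $\eta \to 0$; this contribution is absorbed into the target estimate because Theorem~\ref{thm:s_strong}/\ref{thm:s_real_weak} bounds the \emph{full} distribution of $N_{\eta'}$ (rather than merely its high-probability behaviour), so the crude bound on the bad rigidity event contributes only a marginal Markov-type error. The real case carries an additional bookkeeping burden of tracking the $|\im z|^{-1}$ degeneracy faithfully through every moment estimate involving a small singular value.
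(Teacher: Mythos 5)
You take a genuinely different route from the paper. The paper's proof of this lemma uses a single split at scale $N^{-1+\delta/2}$: the contribution of the singular values below this scale is bounded by $n_0\cdot N\eta/\big((N\lambda_1^z)^2+(N\eta)^2\big)$, where $n_0 := \#\{i:\lambda_i^z\leq N^{-1+\delta/2}\}$ is controlled by rigidity; H\"older with a tiny auxiliary exponent $\xi$ decouples $n_0$ from the $\lambda_1$-factor, which is then integrated against the lower-tail distribution of $N\lambda_1^z$ supplied by Theorem~\ref{thm:s_weak}(i) (resp.\ Theorem~\ref{thm:s_real_weak}). The contribution of the larger singular values is exactly $\langle\im G_z(\ii N^{-1+\delta/2})\rangle$, bounded directly by the local law. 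Your dyadic recursion, iterated down to the local-law scale, unrolls essentially the same ingredients over roughly $\log$-many steps; with care it can deliver the same conclusion, with the logarithmic prefactor arising from the Jensen step over $J$ summands rather than from the $|\log s|$ in the tail bound.

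As written, however, there is a concrete gap in the intermediate estimate $\E N_{\eta'}^{2+\epsilon}\lesssim N^\delta(N\eta')^2$. First, the layer-cake sum $\sum_{\ell\ge1}\ell^{1+\epsilon}\P[\lambda_\ell^z\leq\eta']$ is not dominated by its $\ell=1$ term in any obvious way: the constants in Theorem~\ref{thm:s_strong} depend on $\ell$ through the moments $\frm_{4\ell^2}$ (via Lemma~\ref{lem:lambdak}) and can grow arbitrarily fast, so the series must be truncated---most naturally at $\ell\lesssim N^{O(\xi')}$ on the rigidity event---before invoking any tail bound. Second and more seriously, near your terminal scale one has $N\eta'\sim N\eta_0=N^{\xi'}>1$, where $(N\eta')^2$ is no longer small and the claimed bound fails outright; there $\E N_{\eta'}^{2+\epsilon}$ is controlled by rigidity alone, not by the tail of $\lambda_1^z$, so these steps need a separate treatment. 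A secondary point: unweighted Jensen produces a $J^{1+\epsilon}$ prefactor rather than the $(|\log\eta|+\log N)^1$ in the statement; the excess $J^\epsilon$ can indeed be traded against the $(N\eta)^{-\xi}$ slack, or eliminated altogether by applying Jensen with geometric weights adapted to the factor $(2/\alpha^2)^k$, but this must be done explicitly rather than relegated to ``bookkeeping''. None of these obstacles is fatal, but all must be addressed before your recursion closes; the paper's one-step split sidesteps them entirely.
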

\begin{prop}\label{prop:overlap}{\cite[Theorem 2.2]{Cipolloni-Erdos-Henheik-Schroder2023arXiv}}
	 Let $X$ and $A$ be as in Lemma \ref{lem:gap}, and fix $\delta,\tau>0$ and $p\in\N$. Then there exists a constant $c\in (0,1)$ such that the following holds uniformly over $z\in\caB_{\tau}$;
	\begin{align}\label{eq:sing_ovlp}
		N^{p}\E \sup_{i,j\in\bbrktt{cN}}
		\absv{\brkt{\bsu_{j}^{z},\bsv_{i}^{z}}-\delta_{i,j}q_{i}(z)}^{2p}&\lesssim N^{\delta},
	\end{align}
	where $\{q_{i}(z):i\in\bbrktt{cN}\}$ is the collection of deterministic functions of $z$ defined by
	\beq\label{eq:qi_def}
		q_{i}(z)\deq \frac{\brkt{\im[M(\gamma_{i})]F}}{\brkt{\im M(\gamma_{i})}},\qquad F\deq \begin{pmatrix} 0 & 0 \\ 1 & 0 \end{pmatrix}
	\eeq
	and $\gamma_{i}$  is the $i$-th $N$-quantile of the scDOS of $H_{z}$, i.e.
	\beq
		\gamma_{i}=\inf\left\{x\in\R:\int_{0}^{x}\rho^{z}(t)\dd t\geq \frac{i}{2N}\right\}.
	\eeq
	Furthermore, there exists a constant $C>0$ such that
	\beq\label{eq:qi}
		\absv{q_{i}(z)}\leq C\frac{i}{N},\qquad \forall i\in\bbrktt{cN},\forall z\in\caB_{\tau}.
	\eeq
\end{prop}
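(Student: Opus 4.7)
The plan is to reduce the singular-vector overlap bound to a multi-resolvent local law for the Hermitisation $H_z$, then isolate single pairs $(i,j)$ via spectral contour integrals. I would begin by writing the bilinear form $\brkt{\bsu_{j}^{z},\bsv_{i}^{z}}$ in terms of the resolvent $G_z(w) = (H_z - w)^{-1}$: since ${\bf w}_{k}^{z}({\bf w}_{k}^{z})\adj$ is recovered as a residue $\lim_{\eta\downarrow 0}\eta\,\im G_z(\lambda_{k}+\ii\eta)$, the diagonal contribution is naturally encoded in the averaged two-resolvent quantity
\beqs
\brkt{G_{z}(\ii\eta_{1})\,F\,G_{z}(-\ii\eta_{2})}=\frac{1}{2N}\sum_{k}\frac{({\bf w}_{k}^{z})\adj F\,{\bf w}_{k}^{z}}{(\lambda_{k}-\ii\eta_{1})(\lambda_{k}+\ii\eta_{2})},
\eeqs
whose weight becomes concentrated near $\lambda_{k}\approx\gamma_{i}$ once $\eta_{1},\eta_{2}\sim N^{-1+\delta}$ are centred there. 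Off-diagonal overlaps $({\bf w}_{j}^{z})\adj F\,{\bf w}_{i}^{z}$ with $i\neq j$ are not captured by a single $\brkt{GFG}$ (the orthogonality of ${\bf w}_k$ eliminates them), so one must instead use \emph{two-test-matrix} bilinears $\brkt{G_{z}(w_{1})\,A\,G_{z}(w_{2})\,B}$ that, upon spectral expansion, expose $({\bf w}_{k}^{z})\adj A\,{\bf w}_{l}^{z}\,({\bf w}_{l}^{z})\adj B\,{\bf w}_{k}^{z}$ and, by suitable choice of $A,B$, isolate the desired $(i,j)$ entry.

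The key technical input is then a multi-resolvent local law showing that both families of bilinears are close (with $N^{-c}$ error at the smallest spectral scale $\eta\sim N^{-1+\delta}$) to their deterministic counterparts $T^{\mathrm{det}}$ constructed from the matrix Dyson equation solution $M(w)$ of \eqref{eq:MDE} via inversion of the associated stability operator. A short stability computation at $w_{1}=-w_{2}=\gamma_{i}+\ii 0$ identifies the deterministic diagonal prediction as exactly $q_{i}(z)=\brkt{\im M(\gamma_{i})F}/\brkt{\im M(\gamma_{i})}$, while the deterministic prediction for off-diagonal entries vanishes. The pointwise bound $|\brkt{\bsu_{j}^{z},\bsv_{i}^{z}}-\delta_{ij}q_{i}(z)|$ then follows from the concentration around $T^{\mathrm{det}}$ after a smoothed contour integration in $w_{1},w_{2}$ around $\gamma_{i},\gamma_{j}$, with the rigidity bound \eqref{eq:rigid} used to justify that the contours enclose exactly one eigenvalue each. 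Promoting this to the sup over $i,j\in\bbrktt{cN}$ and high moments \eqref{eq:sing_ovlp} is a routine union-bound/moment argument once the multi-resolvent law is proved in high-moment form.

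For the deterministic bound $|q_{i}(z)|\leq Ci/N$ in \eqref{eq:qi}, I would analyse $M(w)$ at small $w$ inside the bulk $\caB_{\tau}$: expanding \eqref{eq:MDE} around $w=0$, the off-diagonal block of $M(w)$ vanishes linearly in $w$ while the diagonal block and $\brkt{\im M(w)}$ remain of order one on $\caB_{\tau}$, so $\brkt{\im M(w)F}=O(w)$ and substituting $w=\gamma_{i}\sim i/N$ (from the fact that the scDos $\rho^{z}$ is bounded on $\caB_{\tau}$) yields the claim. The main obstacle is precisely the two-resolvent local law down to the hard edge of $H_{z}$ with sufficiently sharp $N$-dependence: the single-resolvent law of Lemma~\ref{lem:ll} is insufficient, and one must invoke the recent zig-zag / characteristic-flow technology developed for eigenstate-thermalisation type statements in Hermitisations of non-Hermitian ensembles.
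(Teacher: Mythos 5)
The paper does not prove \eqref{eq:sing_ovlp} at all: Proposition~\ref{prop:overlap} is taken verbatim from \cite[Theorem 2.2]{Cipolloni-Erdos-Henheik-Schroder2023arXiv}, and the only thing the paper adds is a one-sentence derivation of \eqref{eq:qi} from Lemma~A.1(b),(c) of that reference. Your proposal, by contrast, sketches a proof of the cited theorem itself --- spectral representation of $\brkt{\bsu_j^z,\bsv_i^z}$, two-resolvent bilinears $\brkt{G(w_1)AG(w_2)B}$, a multi-resolvent local law down to $\eta\sim N^{-1+\delta}$, a contour-integral extraction of the $(i,j)$ residue, union bound over indices. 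That is a faithful outline of the strategy in \cite{Cipolloni-Erdos-Henheik-Schroder2023arXiv}, but it is precisely the work the present paper is importing rather than redoing; nothing in your sketch could be filled in without the characteristic-flow/zig-zag machinery you correctly flag as ``the main obstacle.''

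On the part that the paper actually argues, namely \eqref{eq:qi}, your reasoning is close but slightly off at one step. You claim ``the off-diagonal block of $M(w)$ vanishes linearly in $w$.'' That is not true: from the explicit block inverse one gets $M_{12}(w)=-(m(w)^2-(A-z)(A-z)^*)^{-1}(A-z)$ with $m(w)=-w-\brkt{M(w)}$, and this does not vanish at $w=0$. The statement that is actually used is that the off-diagonal block of the \emph{skew-Hermitian part} $\im[M]=(M-M^*)/(2\ii)$ vanishes whenever the spectral parameter $w=\ii\eta$ is purely imaginary; equivalently $\brkt{\im[M(\ii\eta)]F}\equiv 0$ for $\eta\in\R$ (this follows because $m(\ii\eta)\in\ii\R$ forces $M_{21}^*=M_{12}$). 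Combined with Lipschitz continuity of $w\mapsto M(w)$ in operator norm and $\gamma_i\sim i/N$ (bulk), this gives $\brkt{\im M(\gamma_i)F}=O(\gamma_i)$, and since $\brkt{\im M(\gamma_i)}\gtrsim 1$ on $\caB_\tau$, the bound $|q_i(z)|\lesssim i/N$ follows. Your final conclusion $\brkt{\im M(w)F}=O(w)$ is therefore correct, but the intermediate justification should be phrased in terms of the vanishing of $\im[M(\ii\eta)]F$ on the imaginary axis, not the vanishing of $M_{12}$.
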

The proof of Lemma \ref{lem:gap} is postponed to the end of this section. It heavily relies on the tail estimate of the lowest singular value $\lambda_{1}^{z}$ from Theorem \ref{thm:s_weak}(i) for the complex case and Theorem \ref{thm:s_real_weak} for the real case. These theorems are proven separately in Sections~\ref{sec:s_strong} and~\ref{sec:s_weak}. Proposition \ref{prop:overlap} is a direct consequence of the thermalisation result in~\cite[Theorem 2.2]{Cipolloni-Erdos-Henheik-Schroder2023arXiv} which computes any quadratic form $({\bsu}^z_i, B {\bsv}^z_j)$ of the bulk singular vectors of $X+A-z$ in terms of a leading deterministic quantity up to an error or order $N^{-1/2}$. The upper bound for $q_{i}(z)$ in \eqref{eq:qi} follows by combining Lemma A.1 (b) and (c) therein, which shows Lipschitz continuity of $M$ (in operator norm) and $\im [M(\ii \eta)]F\equiv 0$ for $\eta\in\R$, respectively. 

\begin{proof}[Proof of Theorem \ref{thm:main}]
	We only prove the complex case, and the real case follows from the exact same proof. Fix $z_{0}\in\D$ with $\absv{z_{0}}\leq 1-\tau$, and let $f_{0}\in C_{c}^{\infty}(\C)$ be a fixed cut-off function with $\lone_{\D}\leq f_{0}\leq \lone_{2\D}$ and $f\deq f_{0}(\frac{\cdot-z_{0}}{r})$. Since $\lone(\absv{\cdot-z_{0}}\leq r)\leq f$, Girko's formula \eqref{eq:Girko} and $\im\brkt{G_{z}(\ii\eta)}=-\ii\brkt{G_{z}(\ii\eta)}$ gives
	\begin{align}
		\E\caN_{z_{0},r}\leq& N\E\int_{\C}f(z)\dd\rho(z)
		=-\frac{N}{2\pi}\int_{\C}\Delta f(z)\int_{0}^{T} \E\im\brkt{G_{z}(\ii\eta)}\dd\eta\dd^{2} z+\frac{1}{4\pi}\int_{\C}\Delta f(z)\E\log\absv{\det (H_{z}-\ii T)}\dd^{2}z	\nonumber\\
		=&\frac{N}{2\pi}\int_{\C}\Delta f(z)\left(\int_{0}^{\eta_{0}}+\int_{\eta_{0}}^{\eta_{1}}+\int_{\eta_{1}}^{T}\right)\ii \E\brkt{G_{z}(\ii\eta)}\dd\eta\dd^{2}z+\frac{1}{4\pi}\int_{\C}\Delta f(z)\E\log\absv{\det (H_{z}-\ii T)}\dd^{2}z	\nonumber\\
		=&:I_{0}+I_{1}+I_{T}+I_{\infty},	\label{eq:split}
	\end{align}
	where we defined
	\begin{align}\label{eq:eta_defn}
		&\eta_{0}\deq N^{-1}\cdot(Nr^{2})^{D}, &  &\eta_{1}\deq 1, & &T\deq N^{D}(Nr^{2})^{-D}
	\end{align}
	for a fixed (large) constant $D>0$. Since the spectral parameters in $G_{z}(\ii\eta)$ are always $z$ and $\ii\eta$, we abbreviate $G\equiv G_{z}(\ii\eta)$ throughout the rest of the proof.
	
	The bound for the first term $I_{0}$ follows by taking $\epsilon=0$ and small enough $\xi>0$ in Lemma \ref{lem:gap};
	\beq\begin{aligned}\label{eq:I0}
		\absv{I_{0}}
		\leq& \frac{N}{2\pi}\int_{\C}\Absv{\Delta f(z)}\int_{0}^{\eta_{0}}\Norm{\brkt{G}}_{2}\dd\eta\dd^{2}z \\
		\lesssim& N^{\delta/2}\int_{0}^{N\eta_{0}}t^{-\xi/2}\sqrt{\absv{\log t}}\dd t
		\lesssim N^{\delta/2}(N\eta_{0})^{1-\xi}\lesssim N^{\delta/2}(Nr^{2})^{D/2},
	\end{aligned}\eeq
	where we used $\norm{\Delta f}_{L^{1}}\lesssim 1$.
	
	To handle $I_{1}$ and $I_{T}$ in \eqref{eq:split}, we use integration by parts, that is, for any $f\in C_{c}^{\infty}(\C)$ and $0<A<B<\infty$
	\beq\label{eq:IP}
	\E\int_{\C}\int_{A}^{B}\Delta f(z)\brkt{G}\dd\eta\dd^{2} z	
	=\int_{A}^{B}\int_{\C}f(z)\E\brkt{\Delta_{z}G_{z}(\ii\eta)}\dd^{2} z\dd\eta.
	\eeq
	The derivative $\Delta_{z}G$ can be calculated directly; for $F\in \C^{2N\times 2N}$ defined in \eqref{eq:qi_def},
	\beq\label{eq:DeltaG}
	\frac{1}{4}\Delta_{z}G	=\partial_{z}\ol{\partial}_{z}G=-\partial_{z}GFG=GF\adj GFG+GFGF\adj G.
	\eeq
	Now we can estimate $I_{T}$ in \eqref{eq:split}. Applying the trivial deterministic bound $\norm{G}\leq \eta^{-1}$ gives
	\beqs
	\E\absv{\brkt{GFGF\adj G+GF\adj GFG}}\leq 2\norm{F}^{2}\E\norm{G}^{3}\leq 2\eta^{-3},
	\eeqs
	which yields
	\beq\label{eq:IT}
	\absv{I_{T}}\lesssim Nr^{2}\int_{1}^{T}\eta^{-3}\dd\eta \leq Nr^{2}.
	\eeq
	
	The estimate for $\E\absv{\brkt{\Delta G}}$ when $\eta<1$ is much trickier than $\eta>1$. We first state the result here as a lemma, use it to conclude Theorem \ref{thm:main}, and then prove the lemma.
	\begin{lem}\label{lem:DelG}
		Let $X$ be a complex regular i.i.d. matrix and $\epsilon,\delta,\tau>0$ be fixed. Then the following holds uniformly over $\absv{z}\leq 1-\tau$ and $\eta\in(0,1)$;
		\beq\label{eq:DelG}
			\E\absv{\brkt{\Delta_{z}G_{z}(\ii\eta)}}\lesssim \frac{N^{1+\delta}}{N\eta}(N\eta\wedge 1)^{-\epsilon}.
		\eeq
	\end{lem}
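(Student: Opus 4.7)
The plan is to start from identity \eqref{eq:DeltaG}, $\tfrac14\Delta_z G = GF^{*}GFG + GFGF^{*}G$, expand each trilinear product in the spectral basis $\{\mathbf{w}_k\}_{|k|\in\bbrktt{N}}$ of $H_z$, and use the fact that $F$ (resp. $F^{*}$) maps $\mathbf{w}_k$ to the vector whose lower block is $\bsu_k$ and whose upper block is $0$ (resp. whose upper block is $\bsv_k$ and whose lower block is $0$), together with $\brkt{\mathbf{w}_l, \mathbf{w}_k} = \delta_{lk}$, to obtain
\[
\tfrac14\brkt{\Delta_z G_z(\ii\eta)} = \frac{1}{2N}\sum_{k,l}\frac{\absv{\brkt{\bsv_l, \bsu_k}}^{2} + \absv{\brkt{\bsu_l, \bsv_k}}^{2}}{(\lambda_k - \ii\eta)^{2}(\lambda_l - \ii\eta)}.
\]
The absolute value $\absv{\brkt{\Delta_z G}}$ is then controlled by a non-negative sum $S$ of the same shape, which I split into a diagonal part $(k=l)$ and an off-diagonal part $(k\neq l)$. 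Within each, I further separate ``bulk'' indices $|k|, |l| \leq cN$ (with the constant $c > 0$ from Proposition \ref{prop:overlap}) from the remaining edge indices, where $\absv{\lambda_k}$ is bounded away from $0$ and the corresponding contributions are $O(1)$.

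For the off-diagonal bulk sum, Proposition \ref{prop:overlap} combined with Markov's inequality yields $\max_{k\neq l \in\bbrktt{cN}}\absv{\brkt{\bsu_k, \bsv_l}}^{2}\lesssim N^{-1+\delta}$ on a high-probability event. This, together with the Ward identity $\sum_k\absv{\lambda_k - \ii\eta}^{-2} = 2N\brkt{\im G}/\eta\lesssim N/\eta$ (via Lemma \ref{lem:ll}) and the direct rigidity estimate $\sum_l \absv{\lambda_l - \ii\eta}^{-1}\lesssim N\log(1/\eta)$, gives $S_{\mathrm{off}}\lesssim N^{\delta}/\eta$ on the good event. The diagonal bulk sum is controlled analogously via $\absv{\brkt{\bsu_k, \bsv_k}}^{2}\lesssim \absv{q_k(z)}^{2}+N^{-1+\delta}\lesssim (k/N)^{2}+N^{-1+\delta}$, using Proposition \ref{prop:overlap} and \eqref{eq:qi}; the leading $(k/N)^{2}$ contribution integrates to $\int_{0}^{1} x^{2}(x^{2}+\eta^{2})^{-3/2}\dd x\sim\log(1/\eta)\lesssim N^{\delta}$.

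The delicate point is the contribution of the smallest singular values $\lambda_{\pm 1}$, which may be much smaller than $1/N$ with nonvanishing probability. For these indices rigidity is useless, and I rely instead on the moment bound extracted from Theorem \ref{thm:s_strong}(i): the tail $\P[N\lambda_1^{z}\leq s]\lesssim N^{\delta}s^{2}$ gives $\E(N\lambda_1^{z})^{-p}\lesssim N^{\delta}$ for every fixed $p<2$. In the regime $\eta\geq 1/N$, the trivial deterministic bound $\absv{\lambda_1-\ii\eta}^{-1}\leq\eta^{-1}$ already matches the target $N^{\delta}/\eta$. For $\eta<1/N$, I interpolate via the elementary inequality
\[
\frac{1}{\absv{\lambda_1 - \ii\eta}^{p}} = \frac{1}{(\lambda_1^{2}+\eta^{2})^{p/2}}\leq \frac{1}{\lambda_1^{p(1-\alpha)}\,\eta^{p\alpha}},\qquad \alpha\in[0,1],
\]
apply the moment bound to $\lambda_1^{-p(1-\alpha)}$ with $p(1-\alpha)<2$, and absorb the factor $\eta^{-p\alpha}$ into the desired $(N\eta)^{-\epsilon}$ on the right-hand side of \eqref{eq:DelG}. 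The exceptional event on which Proposition \ref{prop:overlap} or the rigidity of Lemma \ref{lem:ll} fails is treated by the crude deterministic bound $\absv{\brkt{\Delta_z G}}\leq C\norm{G}^{3}\leq C\absv{\lambda_1 -\ii\eta}^{-3}$, whose $(2-\epsilon')/3$-power has finite expectation by the same moment bound, and whose contribution is suppressed by the polynomially small failure probability.

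The main obstacle is the simultaneous control of the different regimes of $\eta$ together with the near-optimal matching of the factor $(N\eta\wedge 1)^{-\epsilon}$. One must decouple the probabilistic inputs---rigidity, the off-diagonal overlap bounds of Proposition \ref{prop:overlap}, and the moments of $1/\lambda_1^{z}$---so that no single small-probability event inflates the final bound, and one must carefully choose the interpolation exponent $\alpha$ for each contribution involving $\lambda_{\pm 1}$ so that the $N^{\delta}$ losses from Theorem \ref{thm:s_strong}(i) combine into a single $N^{\delta}$ rather than accumulate.
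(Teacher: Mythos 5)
Your spectral-basis expansion of $\brkt{\Delta_z G}$ is correct, and the overall splitting (diagonal vs.\ off-diagonal, bulk indices $\le cN$ vs.\ edge, with Proposition~\ref{prop:overlap} and \eqref{eq:qi} controlling overlaps, and the smallest singular value handled by the tail bound and an $\alpha$-interpolation) mirrors the structure of the paper's proof, which is built on the same decomposition of \eqref{eq:Holder_prep_1}--\eqref{eq:Holder_prep}. However, there is a genuine gap in how you handle the exceptional event and the rigidity-free $\eta$-range, and it is not a minor technicality: it is precisely the step that forces the paper to stay in expectation and use H\"older inequalities rather than condition on a good event.

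The problem is the crude bound $\absv{\brkt{\Delta_z G}}\le C\norm{G}^3=C(\lambda_1^2+\eta^2)^{-3/2}$ on the bad event $\Xi^c$. The only uniform-in-$\eta$ moment control you have on $(\lambda_1^2+\eta^2)^{-3/2}$ comes from $\P[N\lambda_1^z\le s]\lesssim N^\delta s^2$, so $\E(\lambda_1^2+\eta^2)^{-3p/2}$ is bounded independently of $\eta$ only for $3p<2$, i.e.\ $p<2/3<1$. A H\"older step $\E\lone_{\Xi^c}Y\le(\P[\Xi^c])^{1/q'}(\E Y^q)^{1/q}$ requires $q>1$, which is unavailable; the trivial alternative $\norm{G}^3\le\eta^{-3}$ gives $\E\lone_{\Xi^c}\norm{G}^3\le\eta^{-3}N^{-D}$, which does \emph{not} satisfy $\lesssim N^{\delta}\eta^{-1}(N\eta\wedge1)^{-\epsilon}$ for all $\eta\in(0,1)$ no matter how large $D$ is, because the claim of Lemma~\ref{lem:DelG} has no lower cutoff on $\eta$ (and indeed is applied in the proof of Theorem~\ref{thm:main} with $\eta_0 = N^{-1}(Nr^2)^D$, which is arbitrarily small as $r\to 0$). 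A closely related issue appears in your good-event estimate $S_{\mathrm{off}}\lesssim N^\delta/\eta$: it uses $\sum_k(\lambda_k^2+\eta^2)^{-1}=2N\brkt{\im G}/\eta\lesssim N/\eta$, i.e.\ $\brkt{\im G}\lesssim1$, but the local law \eqref{eq:ll} and the rigidity event \eqref{eq:rigid} give this only for $\eta\ge N^{-1+\epsilon}$; below $N^{-1}$ the first few singular values $\lambda_2,\dots,\lambda_{O(N^\xi)}$ are uncontrolled on any high-probability event, not just $\lambda_1$, so there is no deterministic bound on $\brkt{\im G}$ there.

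Both issues have a common cure, which is what the paper does: instead of $\norm{G}^3$, bound $\absv{\brkt{\Delta_zG}}$ by the much smaller random quantity $\tfrac{1}{\eta}\brkt{\im G}^2$ (times the overlap factors), as in \eqref{eq:Holder_prep_1}--\eqref{eq:Holder_prep}, and then take expectations via H\"older with exponents $1+c'\epsilon$ and its conjugate. The crucial input that makes this work is Lemma~\ref{lem:gap}, $\E\absv{\brkt{G}}^{2+\epsilon}\lesssim 1+N^\delta(N\eta)^{-\epsilon-\xi}(\absv{\log\eta}+\log N)$, which packages the tail of $\lambda_1$ \emph{and} the contributions of all the other small singular values into a single moment bound valid for every $\eta\in(0,1)$. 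Since $\brkt{G}^2$ has finite $(1+c'\epsilon)$-th moment, the bad event can be absorbed by $\P[\Xi^c]^{c'\epsilon/(1+c'\epsilon)}$, and the good-event bound never needs a pointwise control of $\brkt{\im G}$. Your interpolation idea for $\lambda_1$ is sound and in fact underlies the proof of Lemma~\ref{lem:gap}, but applying it directly to $\norm{G}^3$ loses too many degrees of freedom: you must keep the full $\brkt{\im G}^2/\eta$ structure so that each small $\lambda_i$ contributes only one power $(\lambda_i^2+\eta^2)^{-1}$ weighted by $\eta/N$, not three powers all concentrated on $\lambda_1$.
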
	
	Given Lemma \ref{lem:DelG}, we substitute \eqref{eq:DelG} into \eqref{eq:IP} with $A=\eta_{0}$ and $B=\eta_{1}$ so that
	\beq\begin{aligned}\label{eq:I1}
		\absv{I_{1}}&\leq N\int_{\eta_{0}}^{\eta_{1}}\int_{\C}\absv{f(z)}\absv{\E\brkt{\Delta G}}\dd^{2}z\dd\eta 
		\lesssim N^{\delta/2} Nr^{2}\int_{\eta_{0}}^{\eta_{1}}\frac{(N\eta\wedge 1)^{-\epsilon}}{N\eta}(N\dd\eta)\\
		&\lesssim N^{\delta/2} (Nr^{2})((N\eta_{0})^{-\epsilon}+\log N)= N^{\delta}(Nr^{2})^{1-D\epsilon} \leq N^{2\delta}(Nr^{2})^{(1-\gamma)},
	\end{aligned}\eeq
	where in the last step we took $\epsilon<\gamma/D$.

Finally, the last term $I_{\infty}$ is estimated using $\log (1+x^{2})\leq x^{2}$ and $\E\brkt{H_{z}^{2}}\lesssim 1$ as
\beq\label{eq:Iinf}
\absv{I_{\infty}}=\frac{1}{2}\Absv{\int_{\C}\Delta f(z)\E\Tr \log (1+T^{-2}H_{z}^{2})\dd^{2} z}\leq \frac{N}{T^{2}}\int_{\C}\absv{\Delta f(z)} \E\brkt{H_{z}^{2}}\dd^{2} z
\lesssim T^{-2}N\lesssim Nr^{2}.
\eeq
Combining \eqref{eq:I0}, \eqref{eq:IT}, \eqref{eq:I1}, and \eqref{eq:Iinf}, we have proved
\beq
\E\caN_{z_{0},r}\lesssim N^{1+2\delta}(Nr^{2})^{1-\gamma}.
\eeq
This completes the proof of Theorem \ref{thm:main} modulo Lemma \ref{lem:DelG}.
\end{proof}

\begin{proof}[Proof of Lemma \ref{lem:DelG}]
	We start with expressing the trace of the right-hand side of \eqref{eq:DeltaG} in terms of the eigenvalues and eigenvectors of $H_{z}$. First we write
	\beq\label{eq:DeltaG_1}
	\brkt{GF\adj GFG}=-\ii\frac{\eta}{2}\left\langle \frac{1}{\eta^{2}+(X+A-z)\adj(X+A-z)}\frac{\eta^{2}-(X+A-z)(X+A-z)\adj}{(\eta^{2}+(X+A-z)(X+A-z)\adj)^{2}}\right\rangle,
	\eeq
	and the second term of \eqref{eq:DeltaG}, $\brkt{GF\adj GFG}$, is given by the same quantity as \eqref{eq:DeltaG_1} with roles of $X$ and $X\adj$ interchanged. Here we used the Schur complement form of $G$, that is,
	\beq\label{eq:G_Schur}
		G=G(\ii\eta)=\begin{pmatrix}
			\dfrac{\ii\eta}{\eta^{2}+(X+A-z)(X+A-z)\adj} & \dfrac{1}{\eta^{2}+(X+A-z)(X+A-z)\adj}(X+A-z)\\
			(X+A-z)\adj\dfrac{1}{\eta^{2}+(X+A-z)(X+A-z)\adj}& \dfrac{\ii\eta}{\eta^{2}+(X+A-z)\adj(X+A-z)}
		\end{pmatrix}.
	\eeq
	Recalling the definition of $\bsu_{i}$ and $\bsv_{i}$ and \eqref{eq:DeltaG_1}, we may further write 
	\beq\begin{aligned}\label{eq:Holder_prep_1}
		\absv{\brkt{GF\adj GFG}} &\leq\frac{1}{2N}\sum_{i,j\in\bbrktt{N}}\frac{\eta}{\eta^{2}+\lambda_{i}^{2}}\frac{\absv{\eta^{2}-\lambda_{j}^{2}}}{(\eta^{2}+\lambda_{j}^{2})^{2}}\absv{\brkt{\bsu_{j},\bsv_{i}}}^{2}	\\
		&\leq \frac{1}{2N\eta}\left(\sum_{i,j\in\bbrktt{cN}}+\sum_{i\in\bbrktt{cN,N},j\in\bbrktt{N}}+\sum_{i\in\bbrktt{N},j\in\bbrktt{cN,N}}\right)\frac{\eta}{\eta^{2}+\lambda_{i}^{2}}\frac{\eta}{\eta^{2}+\lambda_{j}^{2}}\absv{\brkt{\bsu_{j},\bsv_{i}}}^{2}
		\\
		&\leq\frac{1}{2N\eta}\sum_{i,j\in\bbrktt{cN}}\frac{\eta^{2}}{(\eta^{2}+\lambda_{i}^{2})(\eta^{2}+\lambda_{j}^{2})}\absv{\brkt{\bsu_{j},\bsv_{i}}}^{2}+\frac{1}{\eta^{2}+\lambda_{\lceil cN\rceil}^{2}}\frac{1}{N}\sum_{i}\frac{\eta}{\eta^{2}+\lambda_{i}^{2}},
	\end{aligned}\eeq
	where the constant $c>0$ is from Proposition \ref{prop:overlap} and in the last inequality we used 
	\beqs
		\sum_{i\in\bbrktt{N},j\in\bbrktt{cN}}\frac{\eta}{\eta^{2}+\lambda_{i}^{2}}\frac{\eta}{\eta^{2}+\lambda_{j}^{2}}\absv{\brkt{\bsu_{j},\bsv_{i}}}^{2}\leq \frac{\eta}{\eta^{2}+\lambda_{\lceil cN\rceil}^{2}}\sum_{i\in\bbrktt{N}}\frac{\eta}{\eta^{2}+\lambda_{i}^{2}}\left(\sum_{j\in\bbrktt{N}}\absv{\brkt{\bsu_{j},\bsv_{i}}}^{2}\right),
	\eeqs
	and that $\bsu_{j}$'s are orthogonal to perform the $j$-summation. 
	For the first term on the rightmost side of \eqref{eq:Holder_prep_1}, we recall the deterministic function $q_{i}(z)$ from Proposition \ref{prop:overlap} and write
	\beq\begin{aligned}
		&\frac{1}{2N\eta}\sum_{i,j\in\bbrktt{cN}}\frac{\eta^{2}}{(\eta^{2}+\lambda_{i}^{2})(\eta^{2}+\lambda_{j}^{2})}\absv{\brkt{\bsu_{j},\bsv_{i}}}^{2}	\\
		\lesssim& \frac{1}{\eta}\brkt{\im G}^{2} \left(N\sup_{i,j\in\bbrktt{cN}}\absv{\brkt{\bsu_{j},\bsv_{i}}-\delta_{ij}q_{i}(z)}^{2}\right)+\frac{1}{N\eta}\sum_{i\in\bbrktt{cN}}\frac{\eta^{2}}{(\eta^{2}+\lambda_{i}^{2})^{2}}\frac{i^{2}}{N^{2}} \label{eq:Holder_prep_2}.
	\end{aligned}\eeq 
	Plugging in \eqref{eq:Holder_prep_2} to \eqref{eq:Holder_prep_1} and using the same set of inequalities to the second term of \eqref{eq:DeltaG}, we find that
	\begin{align}
		\absv{\brkt{\Delta_{z}G}}\lesssim &\frac{1}{\eta}\brkt{\im G}^{2}\left(N\sup_{i,j\in\bbrktt{cN}}\absv{\brkt{\bsu_{j},\bsv_{i}}-\delta_{ij}q_{i}(z)}^{2}\right)	\nonumber\\
		&+\frac{1}{N\eta}\sum_{i\in\bbrktt{cN}}\frac{\eta^{2}}{(\eta^{2}+\lambda_{i}^{2})^{2}}\frac{i^{2}}{N^{2}}+\frac{1}{\eta^{2}+\lambda_{\lceil cN\rceil}^{2}}\brkt{\im G}.\label{eq:Holder_prep}
	\end{align}

	Next, we estimate the expectation of the right-hand side of \eqref{eq:Holder_prep} for $\eta\in(0,1)$. For the first term in \eqref{eq:Holder_prep}, we apply H\"{o}lder's inequality and $\brkt{G}=\ii\im\brkt{G}$ to get for all $\eta\in(0,1)$ that
	\begin{multline}\label{eq:Holder}
		\frac{1}{\eta}\E\brkt{\im G}^{2}\left(N\sup_{i,j\in\bbrktt{cN}}\absv{\brkt{\bsu_{j},\bsv_{i}}-\delta_{ij}q_{i}(z)}^{2}\right)	\\\leq \frac{1}{\eta}\norm{\brkt{G}^{2}}_{1+c'\epsilon}\Norm{\sup_{i,j\in\bbrktt{cN}}N\absv{\brkt{\bsu_{j},\bsv_{i}}-\delta_{ij}q_{i}(z)}^{2}}_{1+1/(c'\epsilon)},
	\end{multline}
	where $c'>0$ is a small constant, say $c'=1/100$. Then we use Lemma \ref{lem:gap} and Proposition \ref{prop:overlap} with a union bound, respectively, to the first and second factors on the right-hand side of \eqref{eq:Holder} to get
	\beqs\begin{aligned}
		\norm{\absv{\brkt{G}}^{2}}_{1+c'\epsilon}=\norm{\brkt{G}}^{2}_{2(1+c'\epsilon)}\lesssim \left(1+N^{\delta/3}(N\eta)^{-2c'\epsilon}(\absv{\log \eta}+\log N)\right)^{1/(1+c'\epsilon)}\lesssim N^{\delta/2}(N\eta\wedge 1)^{-2c'\epsilon}, \nonumber\\
		\norm{N\sup_{i,j}\absv{\brkt{\bsu_{j},\bsv_{i}}-\delta_{ij}q_{i}(z)}^{2}}_{1+1/(c'\epsilon)}\leq \left(\E \sup_{i,j\in\bbrktt{cN}} (N\absv{\brkt{\bsu_{j},\bsv_{i}}-\delta_{ij}q_{i}(z)}^{2})^{(1+1/(c'\epsilon))}\right)^{c'\epsilon}\lesssim N^{\delta/2}.\label{eq:Holder_1}
	\end{aligned}\eeqs
	Thus we conclude
	\beq\label{eq:Holder_2}
		\frac{1}{\eta}\E\brkt{\im G}^{2}\left(N\sup_{i,j\in\bbrktt{cN}}\absv{\brkt{\bsu_{j},\bsv_{i}}}^{2}\right)\lesssim \frac{1}{\eta}N^{\delta}(1+(N\eta)^{-2c'\epsilon})=\frac{N^{1+\delta}}{N\eta}(N\eta\wedge 1)^{-2c'\epsilon}.
	\eeq
	
	To handle the expectation of the second term in \eqref{eq:Holder_prep}, recall the definition of the event $\Xi_{z}(\epsilon,\xi)$ from \eqref{eq:rigid}. On the event $\Xi\deq \Xi_{z}(c'\delta/2,c'\delta/2)$ we have
	\beq
		\lambda_{i}\geq N^{-c'\delta/2}\frac{i}{N} \qquad \forall i\in\bbrktt{N^{c'\delta},N},
	\eeq
	which follows from \eqref{eq:rigid} by putting in $\eta=N^{-c'\delta/2}(i/N)$. We divide the second term of \eqref{eq:Holder_prep} as
	\beqs\begin{aligned}
		&\sum_{i\in\bbrktt{cN}}\frac{\eta^{2}}{(\eta^{2}+\lambda_{i}^{2})^{2}}\frac{i^{2}}{N^{2}}\leq \left(\sum_{i\in\bbrktt{N^{c'\delta}}}+\lone_{\Xi^{c}}\sum_{i\in\bbrktt{cN}}+\lone_{\Xi}\sum_{i\in\bbrktt{N^{c'\delta},cN}}\right)\frac{\eta^{2}}{(\eta^{2}+\lambda_{i}^{2})^{2}}\frac{i^{2}}{N^{2}}	\\
		\leq&(N^{2c'\delta}+N^{2}\lone_{\Xi^{c}})\brkt{\im G}^{2}+ N^{2c'\delta}\sum_{i\in\bbrktt{N}}\frac{\eta^{2}\cdot (N^{-1}i)^{2}}{(\eta^{2}+(N^{-1}i)^{2})^{2}}
		\lesssim (N^{2c'\delta}+N^{2}\lone_{\Xi^{c}})\brkt{\im G}^{2}+ N^{1+2c'\delta}\eta.
		\end{aligned}\eeqs
	Then we follow the same argument as in \eqref{eq:Holder} to conclude for any fixed $D>0$ that
	\beq\label{eq:Holder_3}\begin{aligned}
		\frac{1}{N\eta}\E \sum_{i\in\bbrktt{cN}}\frac{\eta^{2}}{(\eta^{2}+\lambda_{i}^{2})^{2}}\frac{i^{2}}{N^{2}}&\leq \frac{1}{N\eta}\Norm{N^{2c'\delta}+N^{2}\lone_{\Xi^{c}}}_{1+1/(c'\epsilon)}\Norm{\brkt{G}^{2}}_{1+c'\epsilon}+\frac{N^{2c'\delta}}{\eta}\\
		&\lesssim \frac{N^{2c'\delta}}{\eta} +\frac{N^{-D+\delta/2}}{\eta}(N\eta\wedge 1)^{-2c'\epsilon}\leq \frac{N^{1+\delta}}{N\eta}(N\eta\wedge 1)^{-\epsilon}.
	\end{aligned}\eeq
	
	For the expectation of the last term of \eqref{eq:Holder_prep}, we again use the same event $\Xi=\Xi_{z}(c'\delta/2,c'\delta/2)$ but we only require that $\lambda_{\lceil cN\rceil}\geq cN^{-c'\delta/2}$ on the event. Then we write
	\beq
		\E\lone_{\Xi}\frac{1}{\eta^{2}+\lambda_{\lceil cN\rceil}^{2}}\brkt{\im G}\lesssim N^{c'\delta}\E\brkt{\im G}\leq N^{c'\delta}\norm{\brkt{G}}_{2}\lesssim N^{\delta}(1+(N\eta)^{-\epsilon})
		\leq \frac{N^{1+\delta}}{N\eta}(N\eta\wedge 1)^{-\epsilon}.
	\eeq
	The complementary event $\Xi^{c}$ can be dealt with the same reasoning as in \eqref{eq:Holder_3}. We thus conclude
	\beq\label{eq:Holder_result}
		\E\absv{\brkt{\Delta_{z}G}}\lesssim N^{1+\delta}(N\eta\wedge 1)^{-1-\epsilon}
	\eeq
	as desired.
\end{proof}

\begin{rem}\label{rem:main_1}
	As easily seen from the proof, the factor of $N^{\delta}$ in Theorem \ref{thm:main} is due to the same factors in Lemma \ref{lem:gap} and Proposition \ref{prop:overlap}. In the proof of Lemma \ref{lem:gap}, we use the local laws for $H_{z}$, Lemma \ref{lem:ll}, to show that $\lambda_{1}^{z}$ determines the size of $\brkt{G_{z}}$ up to a factor of $N^{\delta}$. If we can show that the random variable $\absv{\{i:\lambda_{i}^{z}\leq K/N\}}$ has finite high-moment for any fixed $K$, then we may use this as an alternative input and remove $N^{\delta}$ from Lemma \ref{lem:gap}. Likewise, Proposition \ref{prop:overlap} uses the two-resolvent local laws in \cite[Theorem 4.4]{Cipolloni-Erdos-Henheik-Schroder2023arXiv} that is designed for mesoscopic scales. If $h$ has a better decay rate, we expect Proposition \ref{prop:overlap} to be true without the factor of $N^{\delta}$ on the right-hand side of \eqref{eq:sing_ovlp}, which would reduce $N^{\delta}$ in Theorem \ref{thm:main} to a logarithmic correction. 
	
	On the other hand, changing the power $(1-\gamma)$ to the optimal first power of $Nr^{2}$ seems harder. It is due to a H\"{o}lder inequality in \eqref{eq:Holder}, whose purpose is to separate the singular values and the overlaps since we do not know how to estimate their joint distribution effectively.
\end{rem}

\begin{proof}[Proof of Corollary \ref{cor:resolv}]	
	We first prove the complex case and later show how to modify the proof for the real case. Fix $z\in\caB_{\tau}$. We decompose the special test function $w\mapsto 1/w$ into three parts by inserting cutoff functions as follows.
	\beq
		\frac{1}{w}=\frac{1}{w}(1-f_{0}(w/\kappa))+\frac{1}{w}f_{0}({\Cb N^{1/2}\nc}w)+\frac{1}{w}f_{0}(w/\kappa)(1-f_{0}({\Cb N^{1/2}\nc}w))=:f_{1}(w)+f_{2}(w)+f_{3}(w),
	\eeq
	{\Cb where $\kappa>0$ is such that $D(z,2\kappa)\subset B_{\tau/2}$} and the cutoff function $f_{0}$ was defined in the proof of Theorem~\ref{thm:main}. Thus we may write
	\beq\label{eq:cutoff_cor}
	\E\absv{\brkt{(X-z)^{-1}}}^{2-\delta_{1}}=\E\Absv{\int_{\C}(f_{1}(w-z)+f_{2}(w-z)+f_{3}(w-z))\dd\rho(w)}^{2-\delta_{1}}
	\eeq
	where $\rho$ is the spectral distribution of $X$ defined in \eqref{eq:rho}.
	
	The first integral is the easiest as $\absv{f_{1}(w)}\leq \kappa^{-1}$ for all $w\in\C$, so that
	\beq\label{eq:f1_conc}
	\E\Absv{\int_{\C}f_{1}(w-z)\dd\rho(w)}^{2-\delta_{1}}\leq \kappa^{-2+\delta_{1}}.
	\eeq
	{\Cb For the second integral, we use Jensen's inequality (recall $\delta_{1}\in(0,1]$) and $\supp f_{0}\subset D(0,2)$ to write
	\beq\label{eq:resolv_prf}
	\Absv{\int_{\C}f_{2}(w-z)\dd\rho(w)}^{2-\delta_{1}}\leq \rho(D(z,2N^{-1/2}))^{1-\delta_{1}}\int_{\C}\frac{1}{\absv{w-z}^{2-\delta_{1}}}\lone_{D(z,2N^{-1/2})}(w)\dd\rho(w).
	\eeq
	We then consider the following dyadic decomposition of $D(z,2N^{-1/2})$ according to $\absv{w-z}$ ;
	\beq\label{eq:cor_dyadic_decomp}
		D(z,2N^{-1/2})=\bigcup_{k=0}^{\infty}(D(z,2^{-k+1}N^{-1/2})\setminus D(z,2^{-k}N^{-1/2}))=:\bigcup_{k=0}^{\infty}D_{k}.
	\eeq
	Applying Theorem \ref{thm:main} (i) to each domain $D_{k}$ with 
	some $\delta<\delta_{1}/2$ and $\gamma<\delta_{1}/2$, we have
	\beq\begin{aligned}\label{eq:cor_dyadic}
		\E\int_{D_{k}}\frac{1}{\absv{w-z}^{2-\delta_{1}}}\dd\rho(w)
		\leq &(2^{k}N^{1/2})^{2-\delta_{1}}\E \int_{D_{k}}\dd\rho(w)	
		\leq N^{-1}(2^{k}N^{1/2})^{2-\delta_{1}}\E\caN_{z,N^{-1/2}2^{-k+1}}\\
		\leq &CN^{\delta-\delta_{1}/2}2^{k(2-\delta_{1})-2(k-1)(1-\gamma)}
		\leq CN^{\delta-\delta_{1}/2}2^{k(2\gamma-\delta_{1})}.
	\end{aligned}\eeq
	Therefore, summing over $k$ and using $\rho(D(z,2N^{-1/2})) \le \rho(\C) =1$, we obtain
	\beq\label{eq:f2_conc}
		\E\Absv{\int_{\C}f_{2}(w-z)\dd\rho(w)}^{2-\delta_{1}}\leq \sum_{k=0}^{\infty}\E\int_{D_{k}}\frac{1}{\absv{w-z}^{2-\delta_{1}}}\dd\rho(w)\leq CN^{\delta-\delta_{1}/2}\leq C.
	\eeq}
	
	{\Cb The last integral in \eqref{eq:cutoff_cor} can be dealt with the local law for $X+A$. This is the natural extension of the local circular law to $X+A$ and it asserts that for all (possibly $N$-dependent) $C_{c}^{2}$ test function $g$ supported in $\caB_{\tau/2}$ and fixed positive $\epsilon$ and $D$ we have
	\beq\label{eq:cor_ll}
	\P\left[\Absv{\int_{\C}g(w)\dd\rho(w)-\int_{\C}g(w)\dd\rho_{a+x}(w)}\geq
	\frac{N^{\epsilon}}{N}\norm{\Delta g}_{L^{1}}\right]\lesssim N^{-D}.
	\eeq}
	We omit its proof as it is standard, given the optimal local law \cite[Theorem 2.6]{Cipolloni-Erdos-Henheik-Schroder2023arXiv} as an input, following the analogous argument in the proof of~\cite[Theorem 2.5]{Alt-Erdos-Kruger2018}. {\Cb We apply \eqref{eq:cor_ll} to $g(w)=f_{3}(w-z)$, which is supported on $\caB_{\tau/2}$ by definition. We denote the `good' event in \eqref{eq:cor_ll} for this choice of $g$ by $\Xi_\epsilon$, i.e.
	\beq
		\Xi_{\epsilon}\deq\left[\Absv{\int_{\C}f_{3}(w-z)\dd\rho(w)-\int_{\C}f_{3}(w-z)\dd\rho_{a+x}(w)}\leq
		\frac{N^{\epsilon}}{N}\norm{\Delta f_{3}}_{L^{1}}\right],
	\eeq
	so that $\P[\Xi_{\epsilon}^{c}]\lesssim N^{-D}$ for all fixed $D>0$. Now notice that
	\beq\label{eq:Deltaf3}
	\norm{\Delta f_{3}}_{L^{1}}\lesssim N^{1/2}.
	\eeq
	On the other hand, since $\rho_{a+x}$ has a bounded density in $\caB_{\tau}$ by \eqref{eq:DOS_lb}, we have
	\beq\label{eq:f3_center}
		\Absv{\int_{\C}f_{3}(w-z)\dd\rho_{a+x}}\lesssim\int_{D(0,\kappa)}\frac{1}{\absv{z}}\dd^{2}z\lesssim 1.
	\eeq
	Hence on the event $\Xi_{\epsilon}$ we have
	\beq\label{eq:f3_1}
		\E\lone_{\Xi_{\epsilon}}\Absv{\int_{\C}f_{3}(w-z)\dd\rho_{a+x}(w)}^{2-\delta_{1}}\leq 1+ \left(\frac{N^{\epsilon}}{N^{1/2}}\right)^{2-\delta_{1}}\lesssim 1.
	\eeq
	On the complementary event $\Xi_{\epsilon}^{c}$, we recall  $\absv{f_{3}(w)}\leq N^{1/2}$ from the definition of $f_{3}$, which gives
	\beq\label{eq:f3_2}
		\E\lone_{\Xi_{\epsilon}^{c}}\Absv{\int_{\C}f_{3}(w-z)\dd\rho(w)}^{2-\delta_{1}}\leq \P[\Xi_{\epsilon}^{c}]N^{1-\delta_{1}/2}\lesssim N^{1-D}.
	\eeq
	Combining \eqref{eq:f3_1} and \eqref{eq:f3_2} we arrive at
	\beq\label{eq:f3_conc}
		\E\Absv{\int_{\C}f_{3}(w-z)\dd\rho(w)}^{2-\delta_{1}}\lesssim 1.
	\eeq
	Plugging in \eqref{eq:f1_conc}, \eqref{eq:f2_conc}, and \eqref{eq:f3_conc} to \eqref{eq:cutoff_cor} concludes the proof of Corollary \ref{cor:resolv} in the complex case.
	
	Finally, we show how to modify the proof for real $X$. The estimates for $f_{1}$ and $f_{3}$ are completely analogous to the complex case, and their contributions are both bounded by a constant. Henceforth we focus on estimating the contribution of $f_{2}$.  We consider the cases $y=\lambda_{1}(\Im[A-z])\geq N^{-1}$
	and $y\le N^{-1}$ separately.
	
	When $y\geq N^{-1}$, 
	we define $\wt{y}:=\min(N^{-1/2},y/4)$ and further divide $f_{2}$ into two parts, 
	\beq
		f_{2}(w)=\frac{1}{w}f_{0}(\wt{y}^{-1}w)+\frac{1}{w}f_{0}(N^{1/2}w)(1-f_{0}(\wt{y}^{-1}w))=:f_{21}(w)+f_{22}(w).
	\eeq
	The contribution of $f_{21}$ can be estimated in the exact same fashion as  $f_{2}$ for the complex case, using a dyadic decomposition. The only difference is that we use radii $2^{-k+1}\wt{y}$ instead of $2^{-k+1}N^{-1/2}$, and apply Theorem \ref{thm:main} (ii) in place of (i). As a result we obtain
	\beq\label{eq:cor_real_1}
		\E\Absv{\int_{\C}f_{21}(w-z)\dd\rho(w)}^{2-\delta_{1}}\leq C\frac{N^{\delta-\delta_{1}/2}}{y}(N\wt{y}^{2})^{\delta_{1}/2-\gamma}\leq C\frac{N^{\delta-\delta_{1}/2}}{y}.
	\eeq
	For $f_{22}$, we use the crude bound $\absv{f_{22}(w)}\leq 2\wt{y}^{-1}f_{0}(N^{1/2}(w))$ to get
	\begin{multline}\label{eq:cor_real_2}
		\E\Absv{\int_{\C}f_{22}(w-z)\dd\rho(w)}^{2-\delta_{1}}\leq C\wt{y}^{\delta_{1}-2}\E\int_{\C}f_{0}(N^{1/2}(w-z))\dd\rho(w)	
		\leq CN^{-1+\delta}\wt{y}^{\delta_{1}-2}	\\
		\leq CN^{\delta-\delta_{1}/2}+CN^{-1+\delta}y^{\delta_{1}-2}.
	\end{multline}
	where in the first line we replaced $ \dd\rho(w)$ with the bounded density $\dd\rho_{a+x}(w)$
	by applying
	 \eqref{eq:cor_ll} to $g(w)=f_{0}(N^{1/2}(w-z))$, using that 
	$\norm{\Delta g}_{L^{1}}=\norm{ \Delta f_{0}}_{L^{1}} =O(1)$ for this choice of $g$. Adding \eqref{eq:cor_real_1} and \eqref{eq:cor_real_2} proves 
	\beq
		\E\Absv{\int_{\C}f_{2}(w-z)\dd\rho(w)}^{2-\delta_{1}}\leq C+C\frac{N^{\delta}}{y}\left(N^{-\delta_{1}/2}+\frac{1}{Ny^{1-\delta_{1}}}\right)\leq C+\frac{N^{-\delta_{1}/2+\delta}}{y},
	\eeq
	where we used $y\geq N^{-1}$ in the last inequality.
	
	Now it only remains to consider the case $y\leq1/N$ and prove
	\beq\label{eq:cor_real_3_0}
		\E\Absv{\int_{\C}f_{2}(w-z)\dd\rho(w)}^{2-\delta_{1}}\leq C\frac{N^{\delta}}{y^{1-\delta_{1}/2+\delta}}.
	\eeq
	Here we use that $\absv{f_{2}(w-z)}\leq f_{0}(N^{1/2}(w-z))/\lambda_{1}^{z}$, which gives
	\beq\label{eq:cor_real_3}
		\E\Absv{\int_{\C}f_{2}(w-z)\dd\rho(w)}^{2-\delta_{1}}\leq \E (N\lambda_{1}^{z})^{\delta_{1}-2}\Absv{N\int_{\C}f_{0}(N^{1/2}(w-z))\dd\rho(w)}^{2-\delta_{1}}.
	\eeq
	Then we apply H\"{o}lder inequality in \eqref{eq:cor_real_3} to obtain for all fixed $\epsilon''\in(0,\delta_{1})$ that
	\beq\label{eq:Holder_resolv}
	\E\Absv{\int_{\C}f_{2}(w-z)\dd\rho(w)}^{2-\delta_1}\lesssim N^{\delta/2}(\E(N\lambda_{1}^{z})^{-2+\epsilon''})^{(2-\delta_{1})/(2-\epsilon'')},
	\eeq
	where we used local laws to the second factor in \eqref{eq:cor_real_3} to bound its high moment by $N^{\delta/2}$, as in \eqref{eq:cor_real_2}. Now by  \eqref{eq:s_real_weak} in
	Theorem \ref{thm:s_real_weak} and an integration by parts we can compute the right-hand side of \eqref{eq:Holder_resolv} as
	\beqs
		\E(N\lambda_{1}^{z})^{-2+\epsilon''}
		=(2-\epsilon'')\int_{0}^{\infty}t^{-3+\epsilon''}\P[N\lambda_{1}^{z}\leq t]\dd t
		\lesssim 1+ \frac{N^{\delta/2}}{y}\int_{0}^{1}t^{-1+\epsilon''}\log t\dd t\lesssim \frac{N^{\delta/2}}{y}.
		\eeqs
	Thus we have
	\beqs
	\E \Absv{\int_{\C}f_{2}(w-z)\dd\rho(w)}^{2-\delta_1}\lesssim N^{\delta}y^{-(2-\delta_{1})/(2-\epsilon'')},
	\eeqs
	and choosing suitable $\epsilon''\in(0,\delta_{1})$ concludes \eqref{eq:cor_real_3_0}. This completes the proof of Corollary \ref{cor:resolv}.
}
\end{proof}

\begin{proof}[Proof of Lemma \ref{lem:gap}]
	The only input for the proof, other than Lemma \ref{lem:ll}, is Theorem \ref{thm:s_weak}.(i) for the complex case and Theorem \ref{thm:s_real_weak} for the real case. We restrict ourselves to the complex case, since the real case follows directly by replacing the input accordingly.
	
	First, we separate the contribution of the singular values $\lambda_{i}^{z}$ below $N^{-1+\delta/2}$ from the rest;
	\beq\label{eq:s_real_G_cutoff_0}
	\E\absv{\brkt{G}}^{2+\epsilon}\lesssim
	\E n_{0}^{2+\epsilon}\left(\frac{N\eta}{(N\lambda_{1})^{2}+(N\eta)^{2}}\right)^{2+\epsilon}+\E\left(\frac{1}{N}\sum_{i:\lambda_{i}^{z}>N^{-1+\delta/2}}\frac{\eta}{\lambda_{i}^{2}+\eta^{2}}\right)^{2+\epsilon},
	\eeq
	where we denoted by $n_{0}$ the number of singular values below $N^{-1+\delta/2}$;
	\beq
		n_{0}\deq \absv{\{i\in\bbrktt{N}:\lambda_{i}\leq N^{-1+\delta/2}\}}.
	\eeq
	By \eqref{eq:rigid}, the random variable $n_{0}$ satisfies $\norm{n_{0}}_{p}\lesssim N^{\delta}$ for any fixed $p\in\N$. Thus applying H\"{o}lder's inequality to the first term of \eqref{eq:s_real_G_cutoff_0} gives that for each $\xi>0$
	\beq\label{eq:s_real_G_cutoff}
		\E\absv{\brkt{G}}^{2+\epsilon}\lesssim N^{\delta}\Norm{\frac{N\eta}{(N\lambda_{1})^{2}+(N\eta)^{2}}}_{2+\epsilon+\xi}^{2+\epsilon}+\E\brkt{\im G_{z}(\ii N^{-1+\delta/2})}^{2+\epsilon}.
	\eeq
	Denoting $F_{1}(\cdot)=\P[N\lambda_{1}\leq\cdot]$, the first term of \eqref{eq:s_real_G_cutoff} is estimated using \eqref{eq:s_real} as
	\beq\begin{aligned}\label{eq:s_real_G_cutoff_1}
		&\E\left(\frac{N\eta}{(N\lambda_{1})^{2}+(N\eta)^{2}}\right)^{2+\epsilon+\xi}
		\lesssim\int_{0}^{\infty}\int_{x}^{\infty}\frac{(N\eta)^{2+\epsilon+\xi}t}{(t^{2}+(N\eta)^{2})^{3+\epsilon+\xi}}\dd t \dd F_{1}(x)	\\
		=&\int_{0}^{\infty}\frac{(N\eta)^{2+\epsilon+\xi}t}{(t^{2}+(N\eta)^{2})^{3+\epsilon+\xi}}F_{1}(t)\dd t
		\lesssim \int_{0}^{\infty}\frac{(N\eta)^{2+\epsilon+\xi}t^{3}}{(t^{2}+(N\eta)^{2})^{3+\epsilon+\xi}}\left(\absv{\log t}+\log N\right)\dd t	\\
		=&(N\eta)^{-\epsilon-\xi}\int_{0}^{\infty}\frac{s^{3}}{(s^{2}+1)^{3+\epsilon+\xi}}\left(\absv{\log (N\eta s)}+\log N\right)\dd s	
		\lesssim (N\eta)^{-\epsilon-\xi}\left(\absv{\log \eta}+\log N\right).
	\end{aligned}\eeq
	For the second expectation in \eqref{eq:s_real_G_cutoff}, we directly use \eqref{eq:ll} and $\norm{M}\lesssim 1$ from \cite[Appendix A]{Cipolloni-Erdos-Henheik-Schroder2023arXiv} to get
	\beq\label{eq:s_real_G_cutoff_2}
		\E\brkt{\im G_{z}(\ii N^{-1+\delta/2})}^{2+\epsilon}\lesssim 1+\frac{1}{N^{\delta/2}}\lesssim 1.
	\eeq
	Substituting \eqref{eq:s_real_G_cutoff_1} and \eqref{eq:s_real_G_cutoff_2} into \eqref{eq:s_real_G_cutoff} completes the proof of Lemma \ref{lem:gap} for the complex case.
\end{proof}

\section{Upper bound for the overlap: Proof of Theorems \ref{thm:overlap} and \ref{thm:overlap_strong}}\label{sec:O_proof}
We start with the proof of Theorem \ref{thm:overlap_strong} for it motivates that of Theorem \ref{thm:overlap} while being much shorter.
\begin{proof}[Proof of Theorem \ref{thm:overlap_strong}]
Recall the inequalities in \eqref{eq:ovlp_s}. In fact, they follow from more general, elementary deterministic inequalities (see e.g. \cite[Lemma 3.2]{Banks-Kulkarni-Mukherjee-Srivastava2021} for a proof)
\beq\begin{aligned}\label{eq:ovlp_triv}
	\pi\sum_{i:\sigma_{i}\in\caD}\caO_{ii}\leq& \liminf_{\epsilon\to0}\frac{\Absv{\{z\in\caD:\lambda_{1}^{z}\leq \epsilon}}{\epsilon^{2}},	\\
	2\sum_{i:\sigma_{i}\in\caI}\sqrt{\caO_{ii}}\leq &\liminf_{\epsilon\to 0}\frac{\Absv{\{x\in\caI:\lambda_{1}^{x}\leq\epsilon\}}}{\epsilon},
\end{aligned}\eeq
that hold true for any matrix with simple spectrum and any Borel sets $\caD\subset\C$ and $\caI\subset\R$. Simply taking the expectation of these two inequalities and using Fatou's lemma gives
\beq\label{eq:ovlp_triv1}\begin{aligned}
	\E\sum_{i:\sigma_{i}\in\caD}\caO_{ii}\leq \liminf_{\epsilon\to 0}\frac{1}{\pi\epsilon^{2}}\E\absv{\{z\in\caD:\lambda_{1}^{z}\leq\epsilon}
	=\frac{N^{2}}{\pi}\liminf_{\epsilon\to 0}\int_{\caD}\frac{\P[N\lambda_{1}^{z}\leq N\epsilon]}{(N\epsilon)^{2}}\dd^{2} z, \\
	\E\sum_{i:\sigma_{i}\in\caI}\sqrt{\caO_{ii}}\leq \liminf_{\epsilon\to 0}\frac{1}{2\epsilon}\E\absv{\{x\in\caI:\lambda_{1}^{x}\leq\epsilon\}}
	=\frac{N}{2}\liminf_{\epsilon\to 0}\int_{\caI}\frac{\P[N\lambda_{1}^{x}\leq N\epsilon]}{N\epsilon}\dd x.
\end{aligned}\eeq
Then plugging in the results of Theorem \ref{thm:s_strong} into \eqref{eq:ovlp_triv1} proves Theorem \ref{thm:overlap_strong}.
\end{proof}

One can immediately see that it is vital to have the exact rate $\P[N\lambda_{1}\leq s]\lesssim s^{2}$ or $s$ for the proof above. Any suboptimal factor in $s$, even $\absv{\log s}$ as in Theorems \ref{thm:s_weak} and \ref{thm:s_real_weak}, completely ruins the proof, due to the fact that the limit $\epsilon\to0$ in \eqref{eq:ovlp_triv} is not quantitative. Inspecting the original proof of \eqref{eq:ovlp_triv} in \cite{Banks-Kulkarni-Mukherjee-Srivastava2021}, one finds that in order for an inequality like \eqref{eq:ovlp_triv} to be true, $\epsilon$ has to be smaller than, say, (i) minimal gap between eigenvalues in $\caD$ and (ii) $1/\lambda_{1}^{z}$. Roughly, our proof of Theorem \ref{thm:overlap} shows that a suitable deterministic choice of $\epsilon$ is smaller than both of (i) and (ii) with high probability, and quantifies the limit in \eqref{eq:ovlp_triv} via contour integrals. 

\begin{proof}[Proof of Theorem \ref{thm:overlap}]
	First of all, notice that \eqref{eq:overlap_weak} follows immediately from \eqref{eq:overlap_strong} after an application of Markov's inequality, so we will focus on \eqref{eq:overlap_strong} and \eqref{eq:overlap_strong_R}.
	
	Next we prove that it suffices to assume {\Cb $\caD\subset[-4-\norm{A},4+\norm{A}]^{2}$.} Suppose that we have the result for squares in {\Cb $[-4-\norm{A},4+\norm{A}]^{2}$,} and take a general square $\caD$ with $\absv{\caD}\geq N^{-2K}$. Define 
	\beqs
		\caD_{0}\deq\begin{cases}
			\emptyset &\text{ if }\caD\cap{\Cb(-3-\norm{A},3+\norm{A})^{2}}=\emptyset, \\
			\caD\cap{\Cb[-4-\norm{A},4+\norm{A}]^{2}} & \text{ otherwise}.
		\end{cases}
	\eeqs 
	Then $\caD_{0}$ is either empty or a rectangle whose side lengths are at least $N^{-K}$. Since \eqref{eq:overlap_strong} and \eqref{eq:overlap_strong_R} are additive with respect to the domain, the conclusion is true for $\caD_{0}$ with an exceptional event $\Xi_{\caD_{0}}$ such that $\P[\Xi_{\caD_{0}}^{c}]\leq N^{-D}$. We then consider the event {\Cb$\Xi_{0}\deq[\norm{X+A}\leq 3+\norm{A}]$}, which satisfies $\P[\Xi_{0}^{c}]\lesssim N^{-D}$ for all fixed $D>0$ by, e.g. \cite[Theorem 3.1]{Pillai-Yin2014}. Defining $\Xi_{\caD}=\Xi_{\caD_{0}}\cap\Xi_{0}$ we have $\P[\Xi_{\caD}^{c}]\lesssim N^{-D}$. Since there is no eigenvalue in {\Cb$[-3-\norm{A},3+\norm{A}]^{c}\supset\caD\setminus\caD_{0}$} on the event $\Xi_{0}$, \eqref{eq:overlap_strong} and \eqref{eq:overlap_strong_R} hold true for the general square $\caD$.
	
	Thus from now on we assume {\Cb$\caD\subset[-4-\norm{A},4+\norm{A}]^{2}$.}  For a parameter $r<\sqrt{\absv{\caD}}$ to be chosen later, we define $\caC_{r}$ to be a partition of $\caD$ consisting of solid, open squares with side length $r$ so that $\absv{\caC_{r}}\sim \absv{\caD}r^{-2}$ where $\absv{\caC_{r}}$ denotes the number of elements in the partition. Now for this partition, we define the events
	\begin{align}\label{eq:O_Xi}
		\Xi_{1}\deq \bigcap_{C\in\caC_{r}}\left[\absv{\lambda_{1}^{z}}>\fra, \forall z\in \partial C\right],	\qquad 
		\Xi_{2}\deq \bigcap_{C\in\caC_{r}}\left[\absv{\{i:\sigma_{i}\in C\}}\leq 1\right],\AND \Xi\deq \Xi_{1}\cap\Xi_{2}
	\end{align}
	where $\fra>0$ is an $N$-dependent parameter that will also be chosen later. Under these notations, we estimate $\P[\Xi^{c}]$ and the expectation in \eqref{eq:overlap_strong} and \eqref{eq:overlap_strong_R} with the following lemma.
	\begin{lem}\label{lem:est}
		Let {\Cb $\caD\subset\C$} be a square and define $\Xi_{1}$,$\Xi_{2}$, and $\Xi$ as in \eqref{eq:O_Xi}.
		\begin{itemize}
			\item If $X$ is complex, we have
					\begin{gather}
				\E\lone_{\Xi}\sum_{i:\sigma_{i}\in \caD}\caO_{ii}\lesssim N(N\absv{\caD}) \absv{\log \fra}^{2},\label{eq:est1}\\
				\P[\Xi_{1}^{c}] \lesssim N^{2}r^{-1}\fra\absv{\log\fra}{\Cb\absv{\caD}}, \label{eq:est2}\\
				\P[\Xi_{2}^{c}]\lesssim N^{16/5}r^{6/5}\absv{\log r}^{2}{\Cb\absv{\caD}}.\label{eq:est3}
			\end{gather}
			uniformly over $0<\fra\leq r\leq\min((4N)^{-1},\sqrt{\absv{\caD}})$. 
			\item If $X$ is real, we have
			\begin{gather}
				\E\lone_{\Xi}\sum_{i:\sigma_{i}\in \caD}\caO_{ii}\lesssim N\frac{{\Cb(1+\norm{A}^{2})}N\absv{\caD}}{y}\absv{\log \fra}^{2},\label{eq:est1_R}\\
				\P[\Xi_{1}^{c}] \lesssim N^{2}r^{-1}\fra\frac{\absv{\log\fra}}{y}{\Cb\absv{\caD}}, \label{eq:est2_R}\\
				\P[\Xi_{2}^{c}]\lesssim N^{8/3}r^{2/3}y^{-2/3}{\Cb(\absv{\log r}+\absv{\log y})^{2}}{\Cb\absv{\caD}}.\label{eq:est3_R}
			\end{gather}
			uniformly over $0<\fra\leq r\leq \min\{(4N)^{-1},\sqrt{\absv{\caD}},y/2\}$, where we denoted $y\deq\min_{z\in\caD}\lambda_{1}(\Im[A-z])$.
		\end{itemize}
	\end{lem}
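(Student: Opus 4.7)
I would prove the three kinds of estimates by three largely independent arguments, all relying on the lower tail bounds for singular values from Theorem~\ref{thm:s_weak} and Theorem~\ref{thm:s_real_weak}. The sum $\sum\caO_{ii}$ will be controlled via the contour integral identity from~\eqref{eq:contour_simple}; $\P[\Xi_1^c]$ via a net argument exploiting the $1$-Lipschitz continuity of $z\mapsto\lambda_1^z$; and $\P[\Xi_2^c]$ via Weyl's multiplicative inequality, which converts the event that two eigenvalues of $X+A$ fall into the same small square into an event involving the product of the two smallest singular values.

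\textbf{Proof of \eqref{eq:est1}--\eqref{eq:est1_R}.} On $\Xi$, every $C\in\caC_r$ contains at most one eigenvalue and none lies on $\partial C$. If $\sigma_i\in C$, the contour integral formula in~\eqref{eq:contour_simple} (with $\partial C$ replacing the circle) says that $P_i:=-(2\pi\ii)^{-1}\oint_{\partial C}(X+A-w)^{-1}\,dw$ equals the rank-one projector $\bsr_i\bsl_i\adj$, whose squared operator norm equals $\caO_{ii}$. Combining $\|(X+A-w)^{-1}\|=1/\lambda_1^w$ with the Cauchy--Schwarz inequality yields
\beqs
\caO_{ii}\;\le\;\frac{|\partial C|}{4\pi^{2}}\oint_{\partial C}\frac{|dw|}{(\lambda_1^w)^{2}}.
\eeqs
Summing over $C\in\caC_r$ and noting that the skeleton $\caB := \bigcup_C \partial C$ has length $\lesssim |\caD|/r$, with each interior segment appearing in at most two boundaries,
\beqs
\lone_{\Xi}\sum_{i:\sigma_i\in\caD}\caO_{ii}\;\lesssim\;r\int_{\caB}\lone_{\lambda_1^w\ge\fra}\frac{|dw|}{(\lambda_1^w)^{2}}.
\eeqs
Taking expectation and integrating by parts reduces the problem to controlling $\sup_{w\in\caB}2\int_{N\fra}^\infty \P[N\lambda_1^w\le s]\,s^{-3}\,ds$. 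Substituting the bound $\P[N\lambda_1^w\le s]\lesssim s^2(|\log s|+\log N)$ from Theorem~\ref{thm:s_weak}(i) gives $\lesssim |\log\fra|^2$ in the complex case, yielding~\eqref{eq:est1}. In the real case, the constraint $r\le y/2$ ensures $\lambda_1(\Im[A-w])\gtrsim y$ for every $w\in\caB$, so Theorem~\ref{thm:s_real_weak} applies uniformly and produces the extra factor $(1+\|A\|^2)/y$, giving~\eqref{eq:est1_R}.

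\textbf{Proof of \eqref{eq:est2}--\eqref{eq:est2_R} and \eqref{eq:est3}--\eqref{eq:est3_R}.} For $\P[\Xi_1^c]$, I would cover $\caB$ by an $\fra$-net of cardinality $\lesssim |\caD|/(r\fra)$; by the $1$-Lipschitz property of $z\mapsto\lambda_1^z$, $\Xi_1^c$ forces $\lambda_1^z\le 3\fra/2$ at some net point. A union bound together with Theorem~\ref{thm:s_weak}(i) (complex) or Theorem~\ref{thm:s_real_weak} (real) applied at $s=3N\fra/2$ yields \eqref{eq:est2}--\eqref{eq:est2_R}. For $\P[\Xi_2^c]$, if the square $C$ with center $z_C$ contains two distinct eigenvalues $\sigma_i,\sigma_j$ of $X+A$, then Weyl's multiplicative inequality forces
\beqs
\lambda_1^{z_C}\lambda_2^{z_C}\;\le\;|\sigma_i-z_C||\sigma_j-z_C|\;\lesssim\;r^{2}.
\eeqs
For auxiliary $u>0$ I would then split
\beqs
\P[\lambda_1^{z_C}\lambda_2^{z_C}\lesssim r^2]\;\le\;\P[\lambda_1^{z_C}\le u]+\P[\lambda_2^{z_C}\lesssim r^2/u]
\eeqs
and optimize $u$ by matching the tails from Theorem~\ref{thm:s_weak} with $k=2$ (giving $\P[N\lambda_2\le s]\lesssim s^{2k^2}(\log)^k$) against the $\lambda_1$ tail. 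In the complex case, balancing $(Nu)^2\sim (Nr^2/u)^8$ forces $u\sim N^{3/5}r^{8/5}$ and yields per-square probability $\lesssim (Nr)^{16/5}|\log r|^2$; in the real case, the $y^{-1}$-inflated tail from Theorem~\ref{thm:s_real_weak} shifts the balance to $u\sim y^{1/6}N^{1/3}r^{4/3}$ and yields per-square probability $\lesssim (Nr)^{8/3}y^{-2/3}(|\log r|+|\log y|)^2$. Multiplying by the number of squares $\lesssim |\caD|/r^2$ gives \eqref{eq:est3}--\eqref{eq:est3_R}.

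\textbf{Main obstacle.} The delicate point is synchronizing the three estimates so that the subsequent choice of $r$ and $\fra$ in Theorem~\ref{thm:overlap} simultaneously makes $\P[\Xi^c]\le N^{-D}$ while keeping the overlap bound of size $N(N|\caD|)(\log N)^2$. The contour-integral bound depends critically on $\Xi_2$ holding, because only then is $P_i$ a single rank-one projector rather than a sum; yet $\Xi_2$ itself consumes part of the probability budget, constraining how small $r$ may be taken. In the real case, the constraint $r\le y/2$ is precisely what is needed so that Theorem~\ref{thm:s_real_weak} is applicable uniformly on all net points and square centers with the correct $y$-dependence, and the emergence of the $y^{-2/3}$ exponent in~\eqref{eq:est3_R} from a nontrivial balancing of the two tail bounds is the subtlest point of the whole lemma.
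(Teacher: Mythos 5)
Your proposal is correct and follows essentially the same path as the paper's proof: the contour integral over $\partial C$ combined with $\Xi_2$ (so that each square contributes at most one rank-one projector) and a lower tail bound for $\lambda_1^w$ gives \eqref{eq:est1}; a one-dimensional $\fra$-net on $\bigcup_C\partial C$ of size $\sim|\caD|/(r\fra)$ plus the $1$-Lipschitz dependence of $\lambda_1^z$ on $z$ gives \eqref{eq:est2}; and Weyl's multiplicative inequality followed by a threshold split and optimization gives \eqref{eq:est3}. The only cosmetic differences are that you pass from $\|\oint(X+A-w)^{-1}\,dw\|^2$ to $|\partial C|\oint\|(X+A-w)^{-1}\|^2|dw|$ directly by Cauchy--Schwarz (where the paper first writes $\|Y\|^2=\|YY\adj\|$ and uses a double contour integral), and in the Weyl step you threshold on $\lambda_1$ at level $u$ rather than on $\lambda_2$ at level $\kappa$ as in the paper; these are dual cutoffs related by $u\sim r^2/\kappa$ and yield identical exponents.
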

	
	We postpone the proof of Lemma \ref{lem:est} and first deduce the complex case, \eqref{eq:overlap_strong}, from the lemma. Writing $\fra=N^{-k_{1}}$ and $r=N^{-k_{2}}$ with $k_{1},k_{2}>1$, we will show that we can choose constant parameters $k_{1}$ and $k_{2}$ so that \eqref{eq:overlap_strong} is true and the right-hand sides of \eqref{eq:est2}, \eqref{eq:est3} are $O(N^{-D})$.
	
	First of all, taking $\fra=N^{-k_{1}}$ in \eqref{eq:est1} with constant $k_{1}$ immediately proves \eqref{eq:overlap_strong}. Then we plug in $\fra=N^{-k_{1}}$ and $r=N^{-k_{2}}$ into \eqref{eq:est2} and \eqref{eq:est3}, so that
	\beq
	\P[\Xi_{1}^{c}]\lesssim N^{k_{2}-k_{1}+2{\Cb +2\frK_{0}}}\log N \AND \P[\Xi_{2}^{c}]\lesssim N^{\frac{16-6k_{2}}{5}{\Cb +2\frK_{0}}}(\log N)^{2},
	\eeq
	{\Cb where we used $\absv{\caD}\leq (N^{\frK_{0}}+4)^{2}$}.
	Therefore choosing $k_{2}=K+D{\Cb+2\frK_{0}}+100$ and $k_{1}=k_{2}+D{\Cb +2\frK_{0}}+100$ proves $P[\Xi^{c}]\lesssim N^{-D}$. We omit the proof for the real case since it is completely analogous except that we use {\Cb$N^{-K}\leq y\leq 2N^{\frK_{0}}+4$}. This finishes the proof of Theorem \ref{thm:overlap} modulo Lemma \ref{lem:est}.
\end{proof}

\begin{proof}[Proof of Lemma \ref{lem:est}]
	We first prove the complex case, and start with the proof of \eqref{eq:est1}. The major problem is that we cannot pull the sum over $\{i:\sigma_{i}\in\caD\}$ out of the expectation, since the set of indices itself is random. To circumvent this, we use the partition $\caC_{r}$ and the fact that each $C\in\caC_{r}$ contains at most one eigenvalue on the event $\Xi\subset\Xi_{2}$, hence
	\begin{align}\label{eq:Xi2used}
		\E\lone_{\Xi}\sum_{i:\sigma_{i}\in \caD}\caO_{ii} =\E\lone_{\Xi}\sum_{C\in\caC_{r}}\sum_{i:\sigma_{i}\in C}\caO_{ii}=\E\lone_{\Xi}\sum_{C\in\caC_{r}}\lone(\exists\sigma_{i}\in C)\caO_{ii}
		=\sum_{C\in\caC_{r}}\E\lone_{\Xi}\lone(\exists\sigma_{i}\in C)\caO_{ii}.
	\end{align}
	Note that the deterministic sum over $C\in\caC_{r}$ in effect replaces the sum over $i$, and in the last equality of \eqref{eq:Xi2used} we interchanged it with the expectation. Furthermore, for each $C\in\caC_{r}$ we have 
	\beq
	\frac{1}{2\pi\ii}\oint_{\partial C}\frac{1}{X+A-z}\dd z
	=\frac{1}{2\pi\ii}\oint_{\partial C}\sum_{i}\frac{\dd z}{\sigma_{i}-z}\bsr_{i}\bsl_{i}\adj	=-\sum_{i:\sigma_{i}\in C}\bsr_{i}\bsl_{i}\adj,
	\eeq
	which is valid as $\P[\exists \sigma_{i}\in\partial C]=0$ and $X+A$ is simple almost surely. Recalling the definition of $\caO_{ij}$ from \eqref{eq:def_O}, this in turn implies
	\begin{align}\label{eq:contour}
		\frac{1}{4\pi^{2}}\lone_{\Xi}\Norm{\oint_{\partial C}\frac{1}{X+A-z}\dd z}^{2}
		=\lone_{\Xi}\Norm{\sum_{i:\sigma_{i}\in C}\bsr_{i}\bsl_{i}\adj}^{2}
		=\lone_{\Xi}\lone (\exists \sigma_{i}\in C)\caO_{ii},
	\end{align}
	where we used that there is at most one eigenvalue in each $C$ on the event $\Xi$. Since the equality in \eqref{eq:contour} is true for every $C\in\caC_{r}$, we take the expectation and plug it into \eqref{eq:Xi2used} to obtain
	\beq\label{eq:O_caC}
	\E\lone_{\Xi}\sum_{i:\sigma_{i}\in \caD}\caO_{ii} \leq \frac{1}{4\pi^{2}}\sum_{C\in \caC_{r}}\E\lone_{\Xi}\Norm{\oint_{\partial C}\frac{1}{(X+A-z)}\dd z}^{2}.
	\eeq
	
	We next estimate the contour integral for each $C$. First, we use $\norm{Y}^{2}=\norm{YY\adj}$ to write
	\beq\begin{aligned}\label{eq:O_C}
		&\E\lone_{\Xi}\Norm{\oint_{\partial C}\frac{1}{(X+A-z)}\dd z}^{2}	
		=\E\lone_{\Xi}\Norm{\oint_{\partial C}\oint_{\partial C}\frac{1}{(X+A-z)}\frac{1}{(X+A-w)\adj}\dd z\dd\ol{w}}\\
		\leq& \E\lone_{\Xi}\int_{\partial C}\int_{\partial C}\Norm{ \frac{1}{(X+A-z)}\frac{1}{(X+A-w)\adj}}\absv{\dd z}\absv{\dd w}	
		=\int_{\partial C}\int_{\partial C}\E\lone_{\Xi}\Norm{ \frac{1}{(X+A-z)}\frac{1}{(X+A-w)\adj}}\absv{\dd z}\absv{\dd w}\\
		\leq& 16r^{2}\sup_{z,w\in\partial C} \E\lone_{\Xi_{1}\cap\Xi_{2}}\Norm{\frac{1}{X+A-z}\frac{1}{(X+A-w)\adj}}
		\leq 16r^{2}\sup_{z\in\partial C}\E\lone_{\Xi_{1}}\Norm{\frac{1}{X+A-z}}^{2},
	\end{aligned}\eeq
	where in last line we used Cauchy-Schwarz inequality and also dropped $\lone_{\Xi_{2}}$. Since $\lambda_{1}^{z}\geq\fra$ on the event $\Xi_{1}$, for all $i\in\bbrktt{N}$ and $z\in\partial C$ we have from Theorem \ref{thm:s_weak} that
	\beq\label{eq:O_lambda}\begin{aligned}
		&\E\lone_{\Xi_{1}}\Norm{\frac{1}{X+A-z}}^{2}
		=N^{2}\E\lone_{\Xi_{1}}\frac{1}{(N\lambda_{1}^{z})^{2}}
		=2N^{2}\int_{N\fra}^{\infty}\frac{1}{s^{3}}\P[N\lambda_{1}^{z}\leq s]\dd s\\
		\leq& N^{2}+CN^{2}\int_{N\fra}^{1}\frac{1}{s}(\absv{\log s}+\log N)\dd s
		\leq N^{2}+CN^{2}\absv{\log (N\fra)}\left(\log N+\absv{\log (N\fra)}\right)
		\leq CN^{2}\absv{\log\fra}^{2},
	\end{aligned}\eeq
	where we used $\fra\leq (4N)^{-1}$ in the second line. Combining \eqref{eq:O_caC} -- \eqref{eq:O_lambda}, we conclude that
	\beq
	\E\lone_{\Xi}\sum_{i:\sigma_{i}\in \caD}\caO_{ii}
	\lesssim \absv{\caC_{r}}r^{2}N^{2}\absv{\log \fra}^{2}\sim N^{2}\absv{\caD}\absv{\log \fra}^{2}.
	\eeq
	This completes the proof of \eqref{eq:est1}.
	
	Next, we prove the second estimate \eqref{eq:est2}. We take a regular $\fra$-grid $\caL$ of points in $\bigcup_{C\in\caC_{r}}\partial C$ so that 
	\begin{align}
		\max_{C\in C_{r}}\max_{z\in \partial C}\min_{w\in \caL}\absv{z-w}\leq \fra \AND \absv{\caL}\sim \absv{\caC_{r}}r\fra^{-1}\sim \absv{\caD}r^{-1}\fra^{-1}.
	\end{align}
	We here remark that it is crucial to have the first negative power $\fra^{-1}$ on the right-hand side, not the second; this comes from the fact that $\caL$ is a grid on $\partial C$, not $C$. Then, on the event
	\beq
	\bigcap_{w\in\caL}[\lambda_{1}^{w}\geq 2\fra],
	\eeq
	for any $z\in \bigcup_{C\in \caC_{r}}\partial C$ we can find a point $w\in \caL$ with $\absv{z-w}\leq \fra$ so that
	\beq
	\lambda_{1}^{z}\geq \lambda_{1}^{w}-\norm{H_{z}-H_{w}}=\lambda_{1}^{w}-\absv{z-w}\geq \fra.
	\eeq
	Thus, by Theorem \ref{thm:s_weak}(i) with $k=1$ and $\fra\leq r\leq (4N)^{-1}$, we obtain
	\beq\label{eq:ovlp_prf1}
	\begin{aligned}
		\P[\Xi_{1}^{c}]=&\P\left[\bigcup_{C\in\caC_{r}}[\exists z\in \partial C \text{ such that }\lambda_{i}^{z}\leq \fra]\right]\leq \P\left[\bigcup_{w\in\caL}[\lambda_{1}^{w}\leq 2\fra]\right]	\\
	\lesssim& \absv{\caL} (N\fra)^{2}(\log N+\absv{\log\fra}) \lesssim N^{2}\absv{\caD}r^{-1}\fra \absv{\log\fra}.
	\end{aligned}\eeq
	This proves \eqref{eq:est2}.
	
	We finally prove \eqref{eq:est3}. By a union bound we trivially have
	\beq\label{eq:est3_1}
	\P[\Xi_{2}^{c}]=\P\left[\bigcup_{C\in\caC_{r}}[\absv{\{i:\sigma_{i}\in C\}}\geq 2]\right]\leq \absv{\caC_{r}}\max_{C\in\caC_{r}}\P[\absv{\{i:\sigma_{i}\in C\}}\geq 2].
	\eeq
	To bound the probability on the right-hand side, we fix a square $C\in\caC_{r}$, take $z_{C}$ to be the center of $C$, and label the eigenvalues so that $\absv{\sigma_{i}-z_{C}}$ increases in $i$. Then we have
	\beq
	\P[\absv{\{i:\sigma_{i}\in C\}}\geq 2]\leq \P[\absv{\sigma_{2}-z_{C}}\leq 2r]\leq \P[\absv{\sigma_{1}-z_{C}}\cdot\absv{\sigma_{2}-z_{C}}\leq 4r^{2}]\leq \P[\lambda_{1}^{z_{C}}\lambda_{2}^{z_{C}}\leq 4r^{2}],
	\eeq
	where the last inequality is due to Weyl;
	\beq
	\prod_{i=1}^{k}\absv{\sigma_{i}-z_{C}}\geq \prod_{i=1}^{k}\lambda_{i}^{z_{C}},\qquad \forall k\in\bbrktt{N}.
	\eeq
	Now for a threshold $\kappa>0$ to be optimized later, by Theorem \ref{thm:s_weak}.(i) with $k=1,2$ we have
	\beq\label{eq:ovlp_prf2}
	\begin{aligned}
		\P[\lambda_{1}^{z_{C}}\lambda_{2}^{z_{C}}\leq 4r^{2}]	
		=& \P[\lambda_{1}^{z_{C}}\lambda_{2}^{z_{C}}\leq 4r^{2}, \lambda_{2}^{z_{C}}\leq\kappa]+\P[\lambda_{1}^{z_{C}}\lambda_{2}^{z_{C}}\leq 4r^{2},\lambda_{2}^{z_{C}}\geq\kappa]	\\
		\leq&\P[\lambda_{2}^{z_{C}}\leq \kappa]+\P[\lambda_{1}^{z_{C}}\leq 4r^{2}/\kappa]	\\
		\lesssim& \left((N\kappa)^{8}+ N^{2}\kappa^{-2}r^{4}\right)\cdot\left(\log N+\absv{\log \kappa}+\absv{\log r}\right)^{2}.
	\end{aligned}
	\eeq
	Note that here we used that $s\in[0,1]$ in Theorem \ref{thm:s_weak} can be extended to $s\in[0,\infty)$ by increasing $C$ slightly. Taking the optimal choice $\kappa=N^{-3/5}r^{2/5}$, we have $\P[\lambda_{1}^{z_{C}}\lambda_{2}^{z_{C}}\leq 4r^{2}]\lesssim (Nr)^{16/5}\absv{\log r}^{2}$. Therefore we conclude from \eqref{eq:est3_1} that
	\beq
	\P[\Xi_{2}^{c}]\lesssim \absv{\caC_{r}}(Nr)^{16/5}\absv{\log r}^{2}\sim N^{16/5}\absv{\caD}r^{6/5}\absv{\log r}^{2},
	\eeq
	completing the proof of Lemma \ref{lem:est} in the complex case.
	
	We next show how to modify the proof for the real case. Firstly in \eqref{eq:O_lambda}, we use Theorem \ref{thm:s_real_weak} instead of Theorem \ref{thm:s_weak} to get
			\beqs
			\E\lone_{\Xi_{1}}\Norm{\frac{1}{X+A-z}}^{2}\lesssim N^{2}\absv{\log \fra}^{2}\frac{\norm{A}^{2}+1}{\lambda_{1}(\im[A-z])},
			\eeqs
			which leads to \eqref{eq:est1_R} via the same argument as in the complex case.
			
			Secondly in \eqref{eq:ovlp_prf1}, we again use the same replacement to prove \eqref{eq:est2_R};
			\beq
				\P[\Xi_{1}^{c}]\leq \P\left[\bigcup_{w\in\caL}[\lambda_{1}^{w}\leq 2\fra]\right]\lesssim \absv{\caL}(N\fra)^{2}\frac{(\absv{\log \fra}+\log N)}{y}\lesssim N^{2}r^{-1}\fra\frac{\absv{\log\fra}}{y}{\Cb \absv{\caD}}.
			\eeq
			Lastly in \eqref{eq:ovlp_prf2}, we {\Cb use Theorems \ref{thm:s_weak} (ii) and \ref{thm:s_real_weak} to get}
			\beq
				\P[\lambda_{1}^{z_{C}}\lambda_{2}^{z_{C}}\leq4r^{2}]\lesssim  \left({\Cb(N\kappa)^{4}}+\frac{1}{y}N^{2}\kappa^{-2}r^{4}\right)(\log N+\absv{\log \kappa}+\absv{\log r})^{2}.
			\eeq
			After optimizing $\kappa$ we obtain
			\beq
				\P[\lambda_{1}^{z_{C}}\lambda_{2}^{z_{C}}\leq 4r^{2}]\lesssim \frac{(Nr)^{8/3}}{y^{2/3}}(\log N+\absv{\log r}+\absv{\log y})^{2},
			\eeq
			which gives
			\beq
				\P[\Xi_{2}^{c}]\lesssim y^{-2/3}N^{8/3}\absv{\caD}r^{2/3}(\absv{\log r}+\absv{\log y})^{2}.
			\eeq
			This concludes the proof of Lemma \ref{lem:est}.
\end{proof}

\section{Proof of Theorem \ref{thm:s_strong}}\label{sec:s_strong}
In the rest of the paper we prove results in Section \ref{sec:s_result}. We start with the proof of Theorem \ref{thm:s_strong}, since it best represents the common core ideas. 

Recall that all of Theorems \ref{thm:s_weak}--\ref{thm:s_real_weak} concern singular values of $X+A$, where $X$ is a regular matrix and $A$ is a deterministic shift sometimes with additional restrictions. We denote the Hermitization of $X+A$ by $H$ and the resolvent $(H-w)^{-1}$ by $G(w)$ as in \eqref{eq:Hermit}. The spectral parameter of $G$ is always taken to be $w=\ii\eta$, where
\beq\label{eq:def_eta}
	\eta\equiv \eta(s)\deq \frac{s}{N}
\eeq
with the same parameter $s>0$ in Theorems \ref{thm:s_weak}--\ref{thm:s_real_weak}. Henceforth we often abbreviate $G\equiv G(\ii\eta)$. As seen below, all arguments and inequalities for the complex case is also used for the real case with some changes to exponents. In order to unify the presentation, we introduce the exponent $\beta$ defined by
\beq\label{eq:def_beta}
	\beta\deq \begin{cases}
		1 & \text{if $X$ is real},\\
		2 & \text{if $X$ is complex}.
	\end{cases}
\eeq

Before commencing the proofs, we introduce a few notations and observations regarding minors of matrices, that are used throughout the rest of the paper. Firstly, for each subset\footnote{The index set $\caI$ is unrelated to the domain $\caI\subset\R$ in Theorem \ref{thm:overlap_strong}; $\caI$ always denote an index set in what follows.} $\caI\subset\bbrktt{N}$ we define the matrix $J^{(\caI)}\in \C^{(\bbrktt{N}\setminus\caI)\times \bbrktt{N}}$ by
\beq\label{eq:def_J}
(J^{(\caI)})_{ij}\deq \delta_{ij}\lone(j\notin\caI)
\eeq
Note that $J^{(\caI)}$ acts as a $\caI$-shift operator; for each $A\in\C^{N\times n}$, taking product $J^{(\caI)}A\in \C^{(N-\absv{\caI})\times n}$ removes the $j$-th rows of $A$ for $j\in\caI$. Secondly, we write $H^{(\caI)}$ and $G^{(\caI)}$ for the Hermitization of $J^{(\caI)}(X+A)\in\C^{(\bbrktt{N}\setminus\caI)\times \bbrktt{N}}$ and its resolvent, that is,
\beq
	H^{(\caI)}\deq \begin{pmatrix} 0	& J^{(\caI)}(X+A) \\ (X+A)\adj (J^{(\caI)})\adj & 0 \end{pmatrix}\in\C^{((\bbrktt{N}\setminus\caI)\cup\bbrktt{N+1,2N})^{2}},\qquad G^{(\caI)}\equiv G^{(\caI)}(\ii\eta)\deq (H^{(\caI)}-\ii\eta)^{-1}.
\eeq
Note that $H^{(\caI)}$ is obtained from $H^{(\emptyset)}=H$ by removing the $j$-th rows and columns for $j\in\caI$. Thirdly, we write the spectral decomposition of $H^{(\caI)}$ as follows;
\beq
	H^{(\caI)}=\sum_{i\in\bbrktt{-(N-\absv{\caI}),-1}\cup\bbrktt{1,N}}\lambda^{(\caI)}_{i}\begin{pmatrix}\bsu_{i}^{(\caI)} \\ \bsv_{i}^{(\caI)}\end{pmatrix}\begin{pmatrix}\bsu_{i}^{(\caI)} \\ \bsv_{i}^{(\caI)}\end{pmatrix}\adj ,\qquad \bsu_{i}^{(\caI)}\in\C^{N-\absv{\caI}},\,\bsv_{i}^{(\caI)}\in\C^{N}
\eeq
where the eigenvalues are ordered increasingly. Note that the superscript $(\emptyset)$ is superfluous, for example $\lambda_{i}^{(\emptyset)}=\lambda_{i}$.

Now we list the key observations under these notations. Almost surely, the matrix $H^{(\caI)}$ has rank $2(N-\absv{\caI})$, and by the block structure of $H^{(\caI)}$ we have
\beq
	0=\lambda_{1}^{(\caI)}=\cdots=\lambda_{\absv{\caI}}^{(\caI)}<\lambda^{(\caI)}_{\absv{\caI}+1}<\cdots<\lambda^{(\caI)}_{N} \AND \lambda^{(\caI)}_{-i}=-\lambda_{i+\absv{\caI}}^{(\caI)}\quad\forall i\in\bbrktt{N-\absv{\caI}},
\eeq
where the strict inequalities are due to the fact that $H^{(\caI)}$ has simple spectrum. Furthermore, for any $\caI\subset\bbrktt{N}$ and $i\in\bbrktt{N}\setminus\caI$, since $H^{(\caI\cup\{i\})}$ is a principal minor of $H^{(\caI)}$, Cauchy interlacing theorem implies that
\beq\label{eq:interlace}
	\lambda_{i}^{(\caI\cup\{i\})}\leq\lambda_{i}^{(\caI)}\leq\lambda_{i+1}^{(\caI\cup\{i\})}\qquad \forall i\in\bbrktt{N}\setminus\caI.
\eeq
For the eigenvectors, we have
\beq\begin{aligned}
	\bsv^{(\caI)}_{-i}=\bsv^{(\caI)}_{i+\absv{\caI}},\quad \bsu^{(\caI)}_{-i}=-\bsu^{(\caI)}_{i+\absv{\caI}},&\AND\norm{\bsv^{(\caI)}_{i}}^{2}=\frac{1}{2}=\norm{\bsu^{(\caI)}_{i}}^{2}& \text{for }&i\in\bbrktt{1,N-\absv{\caI}},\\
	\bsu^{(\caI)}_{i}=0&\AND \norm{\bsv_{i}^{(\caI)}}^{2}=1 &\text{for }&i\in\bbrktt{\absv{\caI}}.
\end{aligned}\eeq
In particular, we can write the spectral decompositions
\beq\label{eq:minor_spec}\begin{aligned}
	(X+A)\adj (J^{(\caI)})\adj J^{(\caI)}(X+A)
	=&\sum_{i\in\bbrktt{\absv{\caI}}}0\cdot \bsv^{(\caI)}_{i}(\bsv^{(\caI)}_{i})\adj+
	\sum_{i\in\bbrktt{\absv{\caI}+1,N}}(\lambda^{(\caI)}_{i})^{2}(\sqrt{2}\bsv^{(\caI)}_{i})(\sqrt{2}\bsv^{(\caI)}_{i})\adj,	\\
	J^{(\caI)}(X+A)(X+A)\adj (J^{(\caI)})\adj=&\sum_{i\in\bbrktt{\absv{\caI}+1,N}}(\lambda^{(\caI)}_{i})^{2}(\sqrt{2}\bsu^{(\caI)}_{i})(\sqrt{2}\bsu^{(\caI)}_{i})\adj.
\end{aligned}\eeq
Finally, since the entries of $X$ are independent, the minor $H^{(\caI)}$ and the $j$-th rows of $X$ for $j\in\caI$ are independent. To be precise, we define the $\sigma$-algebras and the corresponding conditional expectations
\beq\label{eq:def_salg}
	\caF^{(\caI)}\deq\sigma(\{X_{ab}:a\in\bbrktt{N}\setminus\caI,b\in\bbrktt{N}\}),\qquad \E^{(\caI)}[\cdot]\deq \E[\cdot\vert\caF^{(\caI)}].
\eeq
Then all of $H^{(\caI)}$, $G^{(\caI)}$, and $(\lambda_{i}^{(\caI)},\bsu_{i}^{(\caI)},\bsv_{i}^{(\caI)})_{i\in\bbrktt{N}}$ are measurable with respect to $\caF^{(\caI)}$ and thus independent of the $\caI$-rows $\{X_{ab}:a\in\caI,b\in\bbrktt{N}\}$ of $X$.

Proofs of Theorems \ref{thm:s_weak} an \ref{thm:s_strong} both involve an induction over  minors $H^{(\caI)}$ as $\absv{\caI}$ runs through $\bbrktt{k}$, with the same $k$ as in their statements. To make this rigorous, we define for each $\caI\subset\bbrktt{N}$ with $\absv{\caI}\leq k$ the events\footnote{The events $\Xi_{k}$'s here are completely unrelated to those in \eqref{eq:O_Xi}.}
\beq\label{eq:def_Xi}
\Xi_{k}\deq [\lambda_{k}\leq\eta],\qquad \Xi_{k}^{(\caI)}\deq [\lambda_{k}^{(\caI)}\leq \eta],
\eeq
where we recall that $\eta=s/N$. Thus the goal of these two theorems is to bound $\P[\Xi_{k}]$ by $(N\eta)^{\beta k^{2}}$ with the exponent $\beta$ from \eqref{eq:def_beta}. Note that $\Xi_{k}^{(\caI)}\in\caF^{(\caI)}$ and that, due to $\lambda_{k}^{(\caI\cup\{i\})}\leq \lambda_{k}^{(\caI)}$ from \eqref{eq:interlace},
\beq\label{eq:ind_incl}
\Xi_{k}\equiv\Xi_{k}^{(\emptyset)}\subset \Xi^{(\caI_{1})}_{k}\subset \cdots\subset \Xi^{(\caI_{k})}_{k}
\eeq
holds true for any increasing sequence of index sets $(\caI_{j})_{j\in\bbrktt{k}}$. Also note that if $\absv{\caI_{k}}=k$, then the last event $\Xi_{k}^{(\caI_{k})}$ has probability one since $\lambda_{k}^{(\caI_{k})}\equiv 0$.

As we will see shortly, Theorems \ref{thm:s_weak} and \ref{thm:s_strong} both follow from an induction over the chain \eqref{eq:ind_incl} for sequences $(\caI_{j})_{j\in\bbrktt{k}}$ with $\absv{\caI_{j}}=j$, gaining a factor of $(N\eta)^{\beta k}$ at each step. To be precise, we prove
\beq\label{eq:ind}
	\E[\frX_{\caI}\lone_{\Xi_{k}^{(\caI)}}]\leq(N\eta)^{\beta k}\frac{1}{N-\absv{\caI}}\sum_{i\in\bbrktt{N}\setminus \caI}\E[\frX_{\caI\cup\{i\}}\lone_{\Xi_{k}^{(\caI\cup\{i\})}}], \quad \forall \eta\in[0,N^{-1}], \,\,\forall \caI\subset\bbrktt{N},\,\absv{\caI}\leq k-1,
\eeq
for a collection of positive random variables $(\frX_{\caI})_{\caI\subset\bbrktt{N}}$ such that $\frX_{\caI}\in\caF^{(\caI)}$ and $\frX_{\emptyset}\equiv1$. Once we have \eqref{eq:ind}, then it immediately follows that
\begin{align}
	\P[\Xi_{k}]=\E[\frX_{\emptyset}\lone_{\Xi_{k}^{(0)}}]\leq &(N\eta)^{\beta k}\frac{1}{N}\sum_{i\in\bbrktt{N}}\E[\frX_{\{i\}}\lone_{\Xi_{k}^{(\{i\})}}]\leq 
	(N\eta)^{2\beta k}\frac{2}{N(N-1)}\sum_{i_{1}\neq i_{2}\in\bbrktt{N}}\E\frX_{\{i_{1},i_{2}\}}\lone_{\Xi_{k}^{(\{i_{1},i_{2}\})}}\leq \cdots	\nonumber\\
	\leq&(N\eta)^{\beta k^{2}}\frac{(N-k)!k!}{N!}\sum_{\caI\subset\bbrktt{N},\absv{\caI}=k}\E[\frX_{\caI}\lone_{\Xi_{k}^{(\caI)}}]\leq(N\eta)^{\beta k^{2}}\max_{\caI\subset\bbrktt{N},\absv{\caI}=k}\E[\frX_{\caI}],\label{eq:ind_appl}
\end{align}
so that the only remaining thing is to estimate $\E[\frX_{\caI}]$ for $\absv{\caI}=k$, which becomes the correction. The exact form of $\frX_{\caI}$ in \eqref{eq:ind} varies depending on the goal, and in particular $\frX_{\caI}$'s are deterministic in the proof of Theorem \ref{thm:s_weak}.

Although all discussions above are written in terms of a general index set $\caI$ for the sake of rigor, in the actual proof we often choose $\caI=\bbrktt{j}\subset\bbrktt{k}$ for clarity. To further simplify the notation, we write the superscript $(j)$ instead of $(\bbrktt{j})$ for an integer $j$, for example $\lambda^{(j)}\equiv\lambda^{(\bbrktt{j})}$, $\Xi_{k}^{(0)}\equiv\Xi_{k}^{(\bbrktt{0})}=\Xi_{k}^{(\emptyset)}$, et cetera.

We now present the two inputs for the proof of Theorem \ref{thm:s_strong}, whose proofs are postponed to the end of this section. Note that the first input, Proposition \ref{prop:repel}, applies to a general regular matrix $X$ but assumes that $A$ is real when $X$ is.
\begin{prop}\label{prop:repel}
	Let $k,N\in\N$ with $N\geq 3k$, $\bbK=\R$ or $\C$, $X\in\K^{N\times N}$ be a regular matrix, and $A\in\bbK^{N\times N}$ be deterministic. Then there exists a constant $C\equiv C(k,\frb)$ such that 
		\beq\label{eq:repel}
		\E (1+N\lambda^{(\caI)}_{m})^{n}\lone_{\Xi_{k}^{(\caI)}}\leq C(N\eta)^{\beta k}\frac{1}{N-\absv{\caI}}\sum_{i\in\bbrktt{N}\setminus\caI}\E(1+N\lambda_{m+1}^{(\caI\cup\{i\})})^{n+\beta k}\lone_{\Xi_{k}^{(\caI\cup\{i\})}}\quad  \forall \eta\in[0,N^{-1}]
		\eeq
		holds for any $\caI\subset\bbrktt{N}$ with $\absv{\caI}\leq k-1$, $m\in\bbrktt{2k,N-1}$, and $n\geq0$.
\end{prop}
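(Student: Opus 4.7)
My plan is to fix $\caI \subset \bbrktt{N}$ with $|\caI| \leq k-1$ and, for each $i \in \bbrktt{N} \setminus \caI$, introduce $M = J^{(\caI)}(X+A)$ and $M' = J^{(\caI \cup \{i\})}(X+A)$. These are linked by the rank-one positive semi-definite identity
\begin{equation*}
M\adj M = M'\adj M' + R_i\adj R_i,
\end{equation*}
where $R_i \in \K^{1 \times N}$ denotes the $i$-th row of $X+A$. Cauchy interlacing applied to this rank-one update yields $\lambda_j^{(\caI \cup \{i\})} \leq \lambda_j^{(\caI)} \leq \lambda_{j+1}^{(\caI \cup \{i\})}$, giving the set inclusion $\Xi_k^{(\caI)} \subset \Xi_k^{(\caI \cup \{i\})}$ and the pointwise comparison $(1+N\lambda_m^{(\caI)})^n \leq (1+N\lambda_{m+1}^{(\caI \cup \{i\})})^n$. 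The content of \eqref{eq:repel} is therefore to gain an additional factor of $(N\eta)^{\beta k}$ at the price of raising the polynomial exponent from $n$ to $n+\beta k$.

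To produce this gain I would condition on $\caF^{(\caI \cup \{i\})}$, under which $M'$ and its spectral data are frozen while $R_i$ is independent with coordinate densities bounded by $\sqrt{N}\frb$ in the real case and $N\frb^2$ in the complex case. The matrix determinant lemma applied to the rank-one update gives the secular equation
\begin{equation*}
\sum_{a=1}^{N}\frac{|\langle \phi_a, R_i\adj\rangle|^2}{(\lambda_a^{(\caI \cup \{i\})})^2 - z} = -1
\end{equation*}
for the eigenvalues $z=(\lambda_j^{(\caI)})^2$ of $M\adj M$ lying outside the spectrum of $M'\adj M'$, where $(\phi_a)_{a=1}^N$ is an orthonormal eigenbasis of $M'\adj M'$. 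On the event $\Xi_k^{(\caI)} \cap (\Xi_{k+1}^{(\caI \cup \{i\})})^c$ (the complementary piece $\Xi_{k+1}^{(\caI \cup \{i\})}$ is handled separately since interlacing already forces $\lambda_k^{(\caI)} \leq \eta$ there), evaluating this equation at $z=\eta^2$ and using $(\lambda_a^{(\caI \cup \{i\})})^2 \leq \eta^2$ for $a \leq k$ produces the constraint
\begin{equation*}
\|P_{V_k}R_i\adj\|^2 \lesssim \eta^2 (1 + \Theta), \qquad V_k = \mathrm{span}(\phi_1, \ldots, \phi_k),
\end{equation*}
where $\Theta$ is the $R_i$-quadratic form arising from the $a>k$ part of the secular equation. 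Applying the Rudelson-Vershynin anti-concentration bound \cite{Rudelson-Vershynin2015} to the $k$-dimensional projection $P_{V_k}R_i\adj$, whose Lebesgue density on $V_k$ is bounded by $(C\sqrt{N}\frb)^{\beta k}$, gives, conditionally on $\caF^{(\caI \cup \{i\})}$ and $\Theta$, a probability of order $(N\eta)^{\beta k} (1+\Theta)^{\beta k/2}$ for the event.

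The main obstacle is absorbing the factor $(1+\Theta)^{\beta k/2}$ into the polynomial weight $(1+N\lambda_{m+1}^{(\caI \cup \{i\})})^{\beta k}$ appearing on the right-hand side of \eqref{eq:repel}. I would truncate the sum defining $\Theta$ above $a=m$: using the spectral gap $(\lambda_a^{(\caI \cup \{i\})})^{-2} \leq (\lambda_{m+1}^{(\caI \cup \{i\})})^{-2}$ for $a>m$ together with $\|R_i\|^2 = O(N)$, the truncated remainder is bounded by a constant multiple of $N(\lambda_{m+1}^{(\caI \cup \{i\})})^{-2}$, while the finitely many terms $k<a\leq m$ can be controlled by the same bound via the hypothesis $m\geq 2k$ and interlacing. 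Hence $(1+\Theta)^{\beta k/2} \lesssim (1+N\lambda_{m+1}^{(\caI \cup \{i\})})^{\beta k}$ up to an $R_i$-dependence that is removed by a standard H\"older step. Averaging the resulting conditional estimate over $i \in \bbrktt{N} \setminus \caI$ using the independence of $R_i$ from $\caF^{(\caI \cup \{i\})}$ yields \eqref{eq:repel} with a constant depending only on $k$ and $\frb$, as required.
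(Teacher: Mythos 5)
The proposal follows a genuinely different route than the paper and, as written, has concrete gaps. In the paper the factor $(N\eta)^{\beta k}$ is \emph{not} extracted from an anti-concentration estimate. It is obtained first, deterministically, from the trace identity $\sum_{i}\im G_{ii}^{(\caI)}\geq (2\eta)^{-1}$ on $\Xi_k^{(\caI)}$ and Jensen's inequality applied to the $\beta k$-th power of the average; what then remains is a bound on the conditional \emph{negative moment} $\E^{(j)}|G_{jj}^{(j-1)}|^{\beta k}$, which Lemma~\ref{lem:tech} handles by integrating against the bounded $(2k+1)$-dimensional marginal density of the projected row (the extra $k+1$ coordinates with small weights $c_i\sim N\eta$ provide the regularization that makes the integral finite without logarithms). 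You instead try to obtain $(N\eta)^{\beta k}$ directly as the \emph{conditional probability} of $\Xi_k^{(\caI)}$ given $\caF^{(\caI\cup\{i\})}$ via a secular-equation constraint. This changes the structure of the argument significantly, and I do not see how to close it.

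Concretely, I see three difficulties. First, your anti-concentration step is applied ``conditionally on $\caF^{(\caI\cup\{i\})}$ and $\Theta$'', but $\Theta$ is built from the projections $\langle\phi_a,R_i\adj\rangle$ with $a>k$, which are \emph{not} independent of $P_{V_k}R_i\adj$ unless $R_i$ is Gaussian; the Rudelson--Vershynin result controls the marginal density of a $k$-dimensional orthogonal projection, not its density conditional on the complementary projection, so the claimed bound $\lesssim (N\eta)^{\beta k}(1+\Theta)^{\beta k/2}$ is not justified. (Note also that a direct application of the marginal bound would actually give $(\sqrt{N}\eta\sqrt{1+\Theta})^{\beta k}$, which differs from your quoted factor by $N^{\beta k/2}$; the two discrepancies have to cancel for the dimensional analysis even to come out right.) Second, in your truncation of $\Theta$ the ``middle'' terms $k<a\leq m$ have denominators $(\lambda_a^{(\caI\cup\{i\})})^2-z$ that are not bounded below on $\Xi_k^{(\caI\cup\{i\})}$: the event constrains $\lambda_k^{(\caI\cup\{i\})}\leq\eta$ but says nothing about $\lambda_{k+1}^{(\caI\cup\{i\})},\ldots,\lambda_m^{(\caI\cup\{i\})}$, which can sit arbitrarily close to $\eta$ (hence to $z$), so those terms are not controlled by $N(\lambda_{m+1}^{(\caI\cup\{i\})})^{-2}$. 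Third, even granting your bound on the tail, the inequality $(1+\Theta)^{\beta k/2}\lesssim (1+N\lambda_{m+1}^{(\caI\cup\{i\})})^{\beta k}$ is false: with $\Theta\lesssim N/(\lambda_{m+1}^{(\caI\cup\{i\})})^2$ the left side is of order $(N/\mu^2)^{\beta k/2}$ with $\mu=\lambda_{m+1}^{(\caI\cup\{i\})}$, which for $\mu\sim m/N$ is of order $(N^3/m^2)^{\beta k/2}$, much larger than the required $(1+m)^{\beta k}$. The paper's trace-plus-Jensen reduction, followed by the splitting argument for the $(2k+1)$-coordinate marginal in Lemma~\ref{lem:tech}, is precisely the device that sidesteps all three of these obstacles; I would recommend studying that step rather than trying to repair the secular-equation route.
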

\begin{lem}\label{lem:lambdak}
	Let $X$ be a regular i.i.d. matrix, $A\in\C^{N\times N}$ with $\norm{A}\leq\frK$, and for each $z\in\C$ let $\lambda_{1}^{z}\leq\cdots\leq\lambda_{N}^{z}$ be singular values of $X+A-z$. For each fixed $\delta,\tau>0$, there exists a constant $C_{2}\equiv C_{2}(k,\delta,\tau,\bsfrm,\frK)$ such that
	\beq\label{eq:lambda_k}
	\E[(1+N\lambda_{3k}^{z})^{2k^{2}}]\leq C_{2}N^{\delta}
	\eeq
	holds for all $N\geq 3k$ and $\absv{z}\leq 1-\tau$.
\end{lem}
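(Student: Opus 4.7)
The plan is to use the local law \eqref{eq:ll} to show that $N\lambda_{3k}^{z} = O(N^{\epsilon})$ with overwhelming probability for any small $\epsilon > 0$, and then to absorb the exceptional event using a crude moment bound on $\norm{X}$. Concretely, I would construct a high-probability event $\Xi$ on which $\lambda_{3k}^{z} \leq N^{-1+\epsilon}$, write
\[
\E[(1+N\lambda_{3k}^{z})^{2k^2}] = \E[(1+N\lambda_{3k}^{z})^{2k^2}\lone_{\Xi}] + \E[(1+N\lambda_{3k}^{z})^{2k^2}\lone_{\Xi^c}],
\]
and bound the two pieces separately. To construct $\Xi$, fix $\epsilon$ small, set $\eta := N^{-1+\epsilon/2}$ and apply \eqref{eq:ll} with $\xi := \epsilon/4$: since $z$ lies in the bulk so that $\brkt{\im M_{z}(\ii \eta)} \geq c_{\tau} > 0$ for this $\eta$ (by the definition of $\caB_{\tau}$ and continuity of $M$), one obtains $\brkt{\im G_{z}(\ii\eta)} \geq c_{\tau}/2$ outside an event of probability $\leq N^{-D}$. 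A standard consequence of the local law---rigidity of the eigenvalue counting function, obtained from the Stieltjes transform bound via a Helffer--Sj\"{o}strand representation applied to a smoothed indicator of $[-N^{-1+\epsilon},N^{-1+\epsilon}]$---then converts this lower bound into $\absv{\{i : \lambda_{i}^{z} \leq N^{-1+\epsilon}\}} \geq c_{\tau}' N^{\epsilon}$. For $N \geq N_{0}(k,\tau,\epsilon)$, the right-hand side exceeds $3k$, giving $\lambda_{3k}^{z} \leq N^{-1+\epsilon}$ on this event, which we take as $\Xi$.

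On $\Xi$ we then have $(1+N\lambda_{3k}^{z})^{2k^2} \leq 2^{2k^2}N^{2k^2\epsilon}$, which is at most $CN^{\delta/2}$ upon choosing $\epsilon := \delta/(4k^2)$. For the complementary event, I would use $\lambda_{3k}^{z} \leq \norm{X+A-z}$ together with Cauchy--Schwarz:
\[
\E[(1+N\lambda_{3k}^{z})^{2k^2}\lone_{\Xi^c}] \leq \P[\Xi^c]^{1/2}\cdot \left(\E(1+N\norm{X+A-z})^{4k^2}\right)^{1/2}.
\]
The second factor is bounded by $C_{k} N^{2k^2}$ via the moment bounds $\frm_{p} < \infty$, $\norm{A} \leq \frK$, and standard estimates on $\E\norm{X}^{p}$ (e.g.\ of Bai--Yin type), while $\P[\Xi^c]^{1/2} \leq N^{-D/2}$. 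Choosing $D := 10k^2$ renders this contribution $o(1)$, and summing the two pieces yields $\E[(1+N\lambda_{3k}^{z})^{2k^2}] \leq C_{2}N^{\delta}$.

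The main technical point is the rigidity step used to construct $\Xi$: the local law \eqref{eq:ll} and its direct consequence \eqref{eq:rigid} give only an upper bound on the counting function, i.e., a lower bound on each $\lambda_{i}^{z}$; the matching upper bound on $\lambda_{3k}^{z}$ requires a lower bound on the counting function, which is a standard but nontrivial consequence of the full local law and typically passes through a smoothed-indicator argument as sketched above.
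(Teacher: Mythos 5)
Your overall structure --- a good event $\Xi$ plus its complement, Cauchy--Schwarz and crude moment bounds on $\norm{X}$ for the bad event, and parameter choices $\epsilon\sim\delta/k^2$ with $D$ large --- is the same as the paper's. The substantive part is the construction of $\Xi$, and here you have put your finger on a real gap in the paper's write-up. The event $\Xi_z(\epsilon,\xi)$ of \eqref{eq:rigid} records only the \emph{upper} bound $\absv{\{i:\lambda_i^z\leq\eta\}}\leq N^{1+\xi}\eta$ on the counting function, i.e.\ a \emph{lower} bound on each $\lambda_i^z$; the paper's chain $N\lambda_{3k}^z\leq N\lambda_{\lfloor N^{\epsilon+\xi}\rfloor}^z\lesssim N^{2\epsilon}$ claims an \emph{upper} bound on $\Xi_z(\epsilon,\xi)$, which does not follow from $\Xi_z$ as defined (it holds trivially if, say, every $\lambda_i^z\sim1$). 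What is needed, as you observe, is a matching \emph{lower} bound on the counting function near zero, coming from $\brkt{\im M_z(\ii\eta)}\geq c_\tau$ in the bulk. Your Helffer--Sj\"{o}strand route is one standard way to get it, although strictly it uses the local law at spectral parameters $w$ with $\Re w\neq0$, which \eqref{eq:ll} does not state (it is available in the cited source); a lighter alternative staying on the imaginary axis is a two-scale argument: for $\eta_0=N^{-1+\epsilon}$ and a large constant $K$, using $\frac{\eta_0}{\lambda^2+\eta_0^2}\leq\frac{2}{K}\cdot\frac{K\eta_0}{\lambda^2+(K\eta_0)^2}$ for $\lambda>K\eta_0$ gives $c_\tau/2\leq\brkt{\im G(\ii\eta_0)}\leq\frac{1}{N\eta_0}\absv{\{i:\lambda_i^z\leq K\eta_0\}}+\frac{2}{K}\brkt{\im G(\ii K\eta_0)}$, and since $\brkt{\im G(\ii K\eta_0)}=O(1)$ with high probability, taking $K$ large forces $\absv{\{i:\lambda_i^z\leq K\eta_0\}}\gtrsim N\eta_0=N^\epsilon\geq 3k$, hence $\lambda_{3k}^z\leq K\eta_0$. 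Either way, your identification of the missing rigidity step is correct, the rest of your bookkeeping matches the paper's, and your proof is more complete on exactly this point.
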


We finally prove Theorem \ref{thm:s_strong} using these inputs.
\begin{proof}[Proof of Theorem \ref{thm:s_strong}]
	Recall the definitions of $\eta$ and $\beta$ respectively from \eqref{eq:def_eta} and \eqref{eq:def_beta}. Define
	\beq
		\frX_{\caI}\deq C^{\absv{\caI}}(1+N\lambda_{2k+\absv{\caI}}^{(\caI)})^{\beta \absv{\caI}k}
	\eeq
	where $C$ is the constant from Proposition \ref{prop:repel}. Then, since we assumed $z\in\R$ in the real case, we may apply \eqref{eq:repel} with $A$ replaced by $A-z$. This immediately proves \eqref{eq:ind} and hence, by \eqref{eq:ind_appl} we have
	\beq\label{eq:s_strong_1}
		\P[N\lambda_{k}^{z}\leq s]\leq s^{\beta k^{2}}\max_{\caI\subset\bbrktt{N},\absv{\caI}=k}\E[\frX_{\caI}]\leq C^{k}s^{k^{2}}\E[(1+N\lambda_{3k}^{z})^{\beta k^{2}}],
	\eeq 
	where we used $\lambda_{3k}^{(\caI)}\leq \lambda_{3k}$ from \eqref{eq:interlace} in the last inequality. In particular, by taking $C_{1}=C^{k}$, this proves the first inequalities in \eqref{eq:s_C_strong} and \eqref{eq:s_R_strong}.
	
	Finally, since 
	\beqs
		\E(1+N\lambda_{3k})^{k^{2}}\leq (\E(1+N\lambda_{3k})^{2k^{2}})^{1/2},
	\eeqs
	substituting \eqref{eq:lambda_k} into \eqref{eq:s_strong_1} concludes the proof of Theorem \ref{thm:s_strong}.
	\end{proof}

\begin{proof}[Proof of Proposition~\ref{prop:repel}]
	First of all, we prove that one may take $\caI=\bbrktt{j-1}$ for some $j\in\bbrktt{k}$ without loss of generality. To this end, we take a general $\caI\subset\bbrktt{N}$ with $\absv{\caI}\leq k-1$ and assume that Proposition \ref{prop:repel} is valid for $\bbrktt{\absv{\caI}}$. Consider a permutation matrix $P_{\caI}$ that maps $\caI$ into $\bbrktt{\absv{\caI}}$, so that $J^{(\absv{\caI})}P_{\caI}=J^{(\caI)}$. Then it is easy to see that $P_{\caI}X$ is regular with the same density bound $\frb$ if and only if $X$ is, so that we may apply Proposition \ref{prop:repel} to $P_{\caI}(X+A)$. Noticing that the constant $C_{1}$ in \eqref{eq:repel} depends only on $(k,\frb)$ and that
	\beq\label{eq:permut}
		\lambda_{m}^{(\absv{\caI})}(P_{\caI}(X+A))=\lambda_{m}^{(\caI)},
	\eeq
	where the left-hand side stands for the $m$-th smallest singular value of $J^{(\absv{\caI})}P_{\caI}(X+A)$, the result for general $\caI$ immediately follows.
	
	Therefore in what follows we take $\caI=\bbrktt{j-1}$ for some $j\in\bbrktt{k}$, and all superscripts $(\caI)$ are replaced by $(j-1)$. Next, we write
	\beq
		\im\sum_{i\in\bbrktt{j,N}}G^{(j-1)}_{ii}(\ii\eta)=\im \Tr \frac{\ii\eta}{J^{(j-1)}(X+A)(X+A)\adj J^{(j-1)*}+\eta^{2}}	=\sum_{i\in\bbrktt{j,N}}\frac{\eta}{\absv{\lambda^{(j-1)}_{i}}^{2}+\eta^{2}},
	\eeq
	where we recall that $H^{(j-1)}$ and $G^{(j-1)}$ are indexed by $\bbrktt{j,2N}^{2}$. Recalling the definition of $\Xi^{(j-1)}_{k}$ from \eqref{eq:def_Xi}, we have
	\beq\label{eq:repel_G_det}
		\lone_{\Xi^{(j-1)}_{k}}\im\sum_{i\in\bbrktt{j,N}}G^{(j-1)}_{ii}\geq\lone_{\Xi^{(j-1)}_{k}}\sum_{i=j}^{k}\frac{\eta}{\absv{\lambda^{(j-1)}_{i}}^{2}+\eta^{2}}\geq \frac{1}{2\eta}(k-j+1)\lone_{\Xi^{(j-1)}_{k}}\geq \frac{1}{2\eta}\lone_{\Xi^{(j-1)}_{k}},
	\eeq
	and hence, raising it to the $(\beta k)$-th power (recall that $\beta=1$ or $2$ depending on real or complex $X$),
	\beq\label{eq:repel_G}
	\lone_{\Xi^{(j-1)}_{k}}\leq (2N\eta)^{\beta k}\left(\frac{1}{N-j+1}\sum_{i\in\bbrktt{j,N}}\im G_{ii}^{(j-1)}\right)^{\beta k}\lone_{\Xi_{k}^{(j-1)}}.
	\eeq
	Multiplying both sides of \eqref{eq:repel_G} by the factor $(1+N\lambda_{m}^{(j-1)})^{n}$ and using Jensen's inequality, we have
	\beq\label{eq:Jensen}
		\E\lone_{\Xi^{(j-1)}_{k}}(1+N\lambda_{m}^{(j-1)})^{n}\leq \frac{1}{N-j+1}(2N\eta)^{\beta k} \E\lone_{\Xi^{(j-1)}_{k}} (1+N\lambda_{m}^{(j-1)})^{n}\sum_{i\in\bbrktt{j,N}} \absv{G^{(j-1)}_{ii}}^{\beta k}.
	\eeq
	Then, for each $i\in\bbrktt{j,N}$ we use that $\lambda_{m}^{(j-1)}\leq \lambda_{m+1}^{(\bbrktt{j-1}\cup\{i\})}$ from \eqref{eq:interlace} and $\Xi^{(j-1)}_{k}\subset\Xi^{(\bbrktt{j-1}\cup\{i\})}_{k}$ to estimate the $i$-th summand in the right-hand side of \eqref{eq:Jensen} as
	\beq\label{eq:Jensen_1}
		\E\lone_{\Xi^{(j-1)}_{k}} (1+N\lambda_{m}^{(j-1)})^{n}\absv{G^{(j-1)}_{ii}}^{\beta k}
		\leq \E\lone_{\Xi^{(\bbrktt{j-1}\cup\{i\})}_{k}}(1+N\lambda_{m+1}^{(\bbrktt{j-1}\cup\{i\})})^{n}\E^{(\bbrktt{j-1}\cup\{i\})}\absv{G^{(j-1)}_{ii}}^{\beta k},
	\eeq
	where we recall the definition of $\E^{(\caI)}$ from \eqref{eq:def_salg} and that $\lambda_{m+1}^{(\caI)},\Xi^{(\caI)}_{k}$ are measurable with respect to $\caF^{(\caI)}$. Substituting \eqref{eq:Jensen_1} into \eqref{eq:Jensen} and then comparing with the final goal \eqref{eq:repel}, we find that it suffices to prove for all $i\in\bbrktt{j,N}$ that
	\beq
		\lone_{\Xi^{(\bbrktt{j-1}\cup\{i\})}_{k}}\E^{(\bbrktt{j-1}\cup\{i\})}\absv{G^{(j-1)}_{ii}}^{\beta k}\leq C\lone_{\Xi^{(\bbrktt{j-1}\cup\{i\})}_{k}}(1+N\lambda_{2k+1}^{(\bbrktt{j-1}\cup\{i\})})^{\beta k},
	\eeq
	for a constant $C$ depending only on $k$, $\frb$. At this point we may further assume $i=j$ without loss of generality, using the same argument as in \eqref{eq:permut} involving permutation matrices. Finally, recalling the assumption $m\geq 2k$, it only remains to prove
	\beq\label{eq:Jensen_result}
		\lone_{\Xi^{(j)}_{k}}\E^{(j)}\absv{G_{jj}^{(j-1)}}^{\beta k}\leq C\lone_{\Xi^{(j)}_{k}}(1+N\lambda_{2k+1}^{(j)})^{\beta k}.
	\eeq
	
	Next, we decouple the $j$-th row of $X$ from $G_{jj}^{(j-1)}$. Applying Schur complement formula with respect to the $(j,j)$-th entry of $H^{(j-1)}$ gives
	\beq
		\frac{1}{G^{(j-1)}_{jj}}=-\ii\eta -\sum_{k,\ell\in\bbrktt{j,2N}}H^{(j-1)}_{jk}G^{(j)}_{k\ell}H^{(j-1)}_{\ell j}	
		=-\ii\eta-\sum_{k,\ell\in\bbrktt{N}}(X+A)_{jk}G^{(j)}_{k+N,\ell+N}(\ol{X}+\ol{A})_{j\ell}.
	\eeq
	On the other hand, another application of Schur's complement with respect to the bottom right $(N\times N)$ block of $(H^{(j)}-\ii\eta)$ gives
	\beq\label{eq:Schur_G22}
	G^{(j)}_{k+N,\ell+N}=\ii\eta\left((X+A)\adj J^{(j)*}J^{(j)}(X+A)+\eta^{2}\right)^{-1}_{k\ell}, \qquad k,\ell\in\bbrktt{N},
	\eeq
	so that \eqref{eq:minor_spec} gives
	\beq\label{eq:Schur_minor}
		\frac{1}{G^{(j-1)}_{jj}}
		=-\ii\eta-\ii\eta\sum_{i=1}^{N}\frac{(2-\lone(i\leq j))}{(\lambda^{(j)}_{i})^{2}+\eta^{2}}\absv{\bse_{j}\adj(X+A)\bsv_{i}^{(j)}}^{2}=-\ii\eta-\ii\sum_{i=1}^{N}c_{i}^{(j)}\absv{w_{i}^{(j)}}^{2},
	\eeq
where we defined (recall that $\norm{\bsv_{i}^{(j)}}^{2}$ is equal to $1$ for $i\leq j$ and $1/2$ for $i>j$)
\beq
\begin{aligned}\label{eq:repel_def}
	&c_{i}^{(j)}\deq \frac{N\eta}{(N\lambda_{i}^{(j)})^{2}+(N\eta)^{2}},&\qquad 
	&\bsw^{(j)}\deq (w^{(j)}_{1},\cdots,w^{(j)}_{N})\deq U^{(j)}(\bsb^{(j)}+\bsa^{(j)}), 	
	&\bsb^{(j)}\deq \sqrt{N}X\adj\bse_{j}, & 	\\
	&\bsa^{(j)}\deq \sqrt{N}A\adj\bse_{j}, &
	&U^{(j)}\deq\begin{pmatrix} \bsv^{(j)}_{1} & \cdots 
		&\bsv^{(j)}_{j} & \sqrt{2}\bsv^{(j)}_{j+1} & \cdots \sqrt{2}\bsv^{(j)}_{N}\end{pmatrix}\adj. &
\end{aligned}
\eeq
While \eqref{eq:Schur_minor} is an identity, we do not need all summands on the right-hand side but only that
\beq\label{eq:Schur_minor_strong}
		\absv{G_{ii}^{(j-1)}}=\left(\eta+\sum_{i=1}^{N}c_{i}^{(j)}\absv{w_{i}^{(j)}}^{2}\right)^{-1}\leq \left(\sum_{i\in\bbrktt{2k+1}}c_{i}^{(j)}\absv{w_{i}^{(j)}}^{2}\right)^{-1}.
\eeq
Note that $c_{i}^{(j)}$ and $U^{(j)}$ are measurable with respect to $\caF^{(j)}$ defined in \eqref{eq:def_salg} and that $\bsb^{(j)}$ is exactly the $j$-th row of $X$ with a deterministic shift, hence $c_{i}^{(j)}$ and $U^{(j)}$ are independent of $\bsb^{(j)}$. Also the matrix $(U^{(j)})\adj$, and hence $U^{(j)}$, is unitary due to \eqref{eq:minor_spec}. Here we emphasize that, on the event $\Xi^{(j)}_{k}=[\lambda^{(j)}_{k}\leq\eta]$, sizes of $c_{i}^{(j)}$'s are roughly
\beq\label{eq:c_size}
\frac{1}{2N\eta}\leq c_{i}^{(j)}\leq \frac{1}{N\eta}\quad \text{for }i\leq k, \AND c_{i}^{(j)}\sim (N\eta) \quad \text{for } i>k
\eeq
since $N\lambda^{(j)}_{i}\sim1$ for $i>k$. Notice that the first inequality in \eqref{eq:c_size} is not a heuristic, but deterministically true on $\Xi_{k}^{(j)}$.

Substituting \eqref{eq:Schur_minor_strong} into the left-hand side of \eqref{eq:Jensen_result}, we find that it suffices to prove
\beq\label{eq:repel_tech}
	\lone_{\Xi^{(j)}_{k}}\E^{(j)}\left(\sum_{i\in\bbrktt{2k+1}}c_{i}^{(j)}\absv{w_{i}^{(j)}}\right)^{-\beta k}\leq C\lone_{\Xi^{(j)}_{k}}(1+N\lambda_{2k+1}^{(j)})^{\beta k}
\eeq
for a constant $C$ depending only on $k$ and $\frb$. The inequality \eqref{eq:repel_tech} follows from the following technical lemma, whose proof is presented after completing that of Theorem \ref{thm:s_strong}.
\begin{lem}\label{lem:tech}
	Let $k,N\in\N$ with $2k+1\leq N$, $\bbK=\R$ or $\C$, $\frb>0$, and $\bsb\in\bbK^{N}$ be a random vector with independent components. Assume that each component of $\bsb$ ($\re\bsb$ and $\im\bsb$, resp.) has a density bounded by $\frb$ if $\bbK=\R$ (if $\bbK=\C$, resp.). Then there exists a constant $\frC_{1}\equiv \frC_{1}(k,\frb)>0$ such that the following holds for any positive sequence $(c_{i})_{i\in\bbrktt{N}}$ and a unitary matrix $U\in\bbK^{N\times N}$;
	\beq\label{eq:tech}
		\E\left(\sum_{i\in\bbrktt{2k+1}}c_{i}\absv{\bse_{i}\adj U\bsb}^{2}\right)^{-\beta k}\leq 1+ \frC_{1}\prod_{i=1}^{k}c_{i}^{-\beta/2}\cdot \prod_{i=k+1}^{2k+1}c_{i}^{-\beta k/(2(k+1))}.
	\eeq
\end{lem}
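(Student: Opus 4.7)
The plan is to reduce \eqref{eq:tech} to a small-ball probability estimate for the positive quadratic form $S := \sum_{i=1}^{2k+1} c_i|y_i|^2$ where $y_i := \bse_i\adj U\bsb$, and then to integrate via layer cake. Split $S = S_1 + S_2$ with $S_1 := \sum_{i=1}^{k} c_i|y_i|^2$ and $S_2 := \sum_{i=k+1}^{2k+1} c_i|y_i|^2$, so the LHS of \eqref{eq:tech} equals $\E(S_1+S_2)^{-\beta k}$. Crucially, $(y_1, \ldots, y_{2k+1})$ and $(y_1, \ldots, y_k)$ are images of $\bsb$ under $(2k+1)\times N$ and $k\times N$ submatrices of $U$, respectively, both of which have orthonormal rows since $U$ is unitary.

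The main analytic input is the Rudelson--Vershynin small-ball theorem~\cite{Rudelson-Vershynin2015}: for any $W \in \bbK^{m\times N}$ with orthonormal rows, $W\bsb$ has a bounded density on $\bbK^m$ whose sup-norm depends only on $m$ and $\frb$ (in the complex case one applies the real version of \cite{Rudelson-Vershynin2015} to the real doubled form of $W$, which remains orthonormal since $WW\adj = I_m$ translates to the analogous identity in real coordinates). Combined with the standard volume formula $\absv{\{y\in\bbK^m : \sum c_i|y_i|^2 \leq t\}} = C_m t^{\beta m/2}\prod c_i^{-\beta/2}$, this yields the two small-ball bounds
\begin{align*}
	\text{(a)}\quad &\P[S_1+S_2 \leq t] \leq C\,t^{\beta(2k+1)/2}\prod_{i=1}^{2k+1} c_i^{-\beta/2}, \\
	\text{(b)}\quad &\P[S_1+S_2 \leq t] \leq \P[S_1 \leq t] \leq C\,t^{\beta k/2}\prod_{i=1}^{k} c_i^{-\beta/2},
\end{align*}
with $C = C(k,\frb)$.

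The layer-cake identity gives $\E(S_1+S_2)^{-\beta k} \leq 1 + \beta k\int_0^1 t^{-\beta k - 1}\P[S_1+S_2 \leq t]\dd t$, where the $1$ absorbs the contribution of $\{S_1+S_2 > 1\}$. Setting the threshold $t^{*} := 1 \wedge \big(\prod_{i=k+1}^{2k+1} c_i\big)^{1/(k+1)}$, which is precisely the scale at which (a) and (b) balance, I apply (a) on $(0, t^{*}]$ and (b) on $(t^{*}, 1]$. A direct substitution with this choice of $t^{*}$ shows that both pieces contribute a constant (depending on $k,\frb$) times $\prod_{i\leq k}c_i^{-\beta/2}\prod_{i > k}c_i^{-\beta k/(2(k+1))}$, which is the right-hand side of \eqref{eq:tech}.

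The main obstacle is correctly handling the degenerate regime $\prod_{i>k} c_i > 1$, in which $t^{*} = 1$ and the second integral vanishes. There only bound (a) contributes, yielding $\prod_{i=1}^{2k+1} c_i^{-\beta/2}$, which must be shown to be at most the target; this reduces to the elementary inequality $\prod_{i>k}c_i^{\beta/(2(k+1))} \geq 1$, which holds precisely in this regime. A secondary technical point is the invocation of \cite{Rudelson-Vershynin2015} in the complex case, where one must track the factor $\beta$ correctly in the dimension count so that the density of $W\bsb$ on $\bbK^m$ is bounded by a constant depending only on $m$ and $\frb$ (and not on the realization of $U$).
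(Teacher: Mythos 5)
Your proof is correct and essentially mirrors the paper's: both rely on the Rudelson--Vershynin bound for the density of $k$-- and $(2k+1)$--dimensional marginals of $U\bsb$, and both interpolate between the "use $k$ coordinates" and "use $2k+1$ coordinates" estimates at exactly the threshold $\kappa^2 = \bigl(\prod_{i>k}c_i\bigr)^{1/(k+1)}$ that you call $t^*$. The only cosmetic difference is that you phrase the interpolation via a layer-cake integral with two small-ball estimates, whereas the paper partitions the integration domain into $S_1^c$, $S_1\cap S_2^c$, $S_1\cap S_2$ and bounds the density integral directly — the same calculation in a different guise.
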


Notice that Lemma \ref{lem:tech} assumes that $U$ is real orthogonal when $\bbK=\R$. Assuming Lemma \ref{lem:tech} is valid, we complete the proof of Theorem \ref{thm:s_strong}. We aim at applying Lemma \ref{lem:tech} with the choices $(\bsb,c_{i},U)=(\bsb^{(j)}+\bsa^{(j)},c_{i}^{(j)},U^{(j)})$. First of all, recalling that $A$ is real when $X$ is, the matrix $(X+A)\adj\in\bbK^{N\times N}$ is regular so that each component of the vector $(\bsb^{(j)}+\bsa^{(j)})\in\bbK^{N}$ has a density bounded by $\frb$. Secondly, again due to $A\in\bbK^{N\times N}$, the singular vectors $(\bsv_{i}^{(j)})_{i\in\bbrktt{N}}$ are in $\bbK^{N}$ so that $U^{(j)}\in\bbK^{N\times N}$. Lastly, since the constant $\frC_{1}$ in \eqref{eq:tech} is uniform over all $(c_{i})$ and $U$, we may as well take them to be random as long as they are independent of $\bsb$. More precisely, for any $\sigma$-algebra $\caF$ independent of $\bsb$ such that $\sigma((c_{i}),U)\subset \caF$, we may also replace $\E$ on the left-hand side of \eqref{eq:tech} with the conditional expectation $\E[\cdot\vert\caF]$. In particular for $\bsb=\bsb^{(j)}$ we may replace $\E$ with $\E^{(j)}$.

Thus we may apply Lemma \ref{lem:tech} to the left-hand side of \eqref{eq:repel_tech}, so that 
\beq
\begin{aligned}
	\E^{(j)}\left(\sum_{i\in\bbrktt{2k+1}}c_{i}^{(j)}\absv{w_{i}^{(j)}}^{2}\right)^{-\beta k}\leq& 1+C\prod_{i=1}^{k}(c_{i}^{(j)})^{-\beta/2}\prod_{i=k+1}^{2k+1}(c_{i}^{(j)})^{-\beta k/(2(k+1))}	\\
	\leq& 1+C(c_{k}^{(j)}c_{2k+1}^{(j)})^{-\beta k/2},
	\end{aligned}
\eeq
where in the second line we used that $c_{i}^{(j)}$ is decreasing in $i\in\bbrktt{N}$. Then recalling $\Xi_{k}^{(j)}=[\lambda_{k}^{(j)}\leq \eta]$ we have
\beq
	\lone_{\Xi^{(j)}_{k}}\frac{1}{c_{k}^{(j)}c_{2k+1}^{(j)}}=\lone_{\Xi^{(j)}_{k}}\left((N\eta)^{2}+(N\lambda_{2k+1}^{(j)})^{2}\right)\cdot \frac{(N\lambda_{k}^{(j)})^{2}+(N\eta)^{2}}{(N\eta)^{2}}\leq 2\lone_{\Xi^{(j)}_{k}}(1+(N\lambda_{2k+1}^{(j)})^{2})
\eeq
where we used $N\eta=s\leq 1$. Therefore we conclude
\beqs
	\lone_{\Xi_{k}^{(j)}}\E^{(j)}\left(\sum_{i\in\bbrktt{N}}c_{i}^{(j)}\absv{w_{i}^{(j)}}^{2}\right)^{-\beta k}
	\leq \lone_{\Xi^{(j)}_{k}}\left(1+C\left(1+(N\lambda_{2k+1}^{(j)})^{2}\right)^{\beta k/2}\right)
	\leq C'\lone_{\Xi_{k}^{(j)}}\left(1+N\lambda_{2k+1}^{(j)}\right)^{\beta k},
\eeqs
by taking suitable constant $C'$ that still depends only on $(k,\frb)$. This finishes the proof of \eqref{eq:repel_tech}, concluding that of Proposition~\ref{prop:repel}.
\end{proof}

\begin{proof}[Proof of Lemma \ref{lem:tech}]
We define probability measures  $\mu$ and $\mu_{p}$ on $\K^{N}$ and $\K^{p}$ to be the laws of $U\bsb$ and its first $p$ coordinates. More specifically, for any suitable test functions $f:\K^{N}\to\C$ and $f_{p}:\K^{p}\to\C$ we define
\beq\label{eq:dens}
	\int_{\K^{N}}f\dd\mu\deq \int_{\K^{N}}f(U\bsb)h(\bsb)\dd^{\beta N}\bsb,\qquad \int_{\K^{p}}f_{p}\dd\mu_{p}\deq \int_{\K^{N}}f_{p}(w_{1},\cdots,w_{p})\dd\mu(\bsw)
\eeq
where we abbreviated $h(\bsb)\deq\prod_{i=1}^{N}h_{i}(b_{i})$. With $\mu_{2k+1}$, we may rewrite the left-hand side of \eqref{eq:tech} as
\beq\label{eq:repel_Schur_result}
	\E\left(\sum_{i\in\bbrktt{2k+1}}c_{i}\absv{\bse_{i}\adj U\bsb}^{2}\right)^{-\beta k}=\int_{\K^{2k+1}}\left(\sum_{i=1}^{2k+1}c_{i}\absv{w_{i}}^{2}\right)^{-\beta k}\dd\mu_{2k+1}(\bsw).
\eeq

For a threshold $\kappa>0$ that will be optimized afterwards, we divide the integration in \eqref{eq:repel_Schur_result} by inserting the following factor;
\begin{gather}\label{eq:repel_kappa}
	1=\lone_{S_{1}^{c}}+\lone_{S_{1}\cap S_{2}^{c}}+\lone_{S_{1}\cap S_{2}},	\\
	S_{1}\deq \left\{\bsw\in\K^{2k+1}:\sum_{i=1}^{k}c_{i}\absv{w_{i}}^{2}\leq 1\right\},\qquad 
	S_{2}\deq \left\{\bsw\in\K^{2k+1}:\sum_{i=k+1}^{2k+1}c_{i}\absv{w_{i}}^{2}\leq \kappa^{2}\right\}.\nonumber
 \end{gather}
The integral in \eqref{eq:repel_Schur_result} corresponding to the first regime $S_{1}^{c}$ is simply bounded by $1$ as $\mu_{2k+1}$ is a probability measure and the integrand is bounded by one. For the second domain $S_{1}\cap S_{2}^{c}$, denoting the $L^{\infty}$ norm of the Lebesgue density of a measure $\nu$ by $\norm{\nu}_{\infty}$, we have
\beq\label{eq:S1_S2c}\begin{aligned}
	&\int_{\K^{2k+1}}\frac{\lone_{S_{1}\cap S_{2}^{c}}(\bsw)}{\left(\sum_{i=1}^{2k+1}c_{i}\absv{w_{i}}^{2}\right)^{\beta k}}\dd\mu_{2k+1}(\bsw)
	\leq \int_{\K^{k}} \frac{\lone(\sum_{i=1}^{k}c_{i}\absv{w_{i}}^{2}\leq 1)}{\left(\sum_{i=1}^{k}c_{i}\absv{w_{i}}^{2}+\kappa^{2}\right)^{\beta k}}\dd\mu_{k}(\bsw)	\\
	\leq &\norm{\mu_{k}}_{\infty}\prod_{i=1}^{k}c_{i}^{-\beta/2}\int_{\K^{k}}\frac{\lone(\norm{\bsx}\leq 1)}{(\norm{\bsx}^{2}+\kappa^{2})^{\beta k}}\dd^{\beta k}\bsx	
	\leq C\norm{\mu_{k}}_{\infty}\prod_{i=1}^{k}c_{i}^{-\beta/2}\int_{0}^{1}\frac{s^{\beta k-1}}{(s^{2}+\kappa^{2})^{\beta k}}\dd s \\
	\leq &C\norm{\mu_{k}}_{\infty}\kappa^{-\beta k}\prod_{i=1}^{k}c_{i}^{-\beta/2},
\end{aligned}
\eeq
where we used the change of variables $x_{i}=\sqrt{c_{i}}w_{i}$ in the second inequality. Here the constant $C>0$ is a number that depends solely on $\beta$. Likewise, for the integral over the third domain $S_{1}\cap S_{2}$ we get
\beq\label{eq:S1_S2}\begin{aligned}
	&\int_{\K^{2k+1}}\frac{\lone_{S_{1}\cap S_{2}}(\bsw)}{\left(\sum_{i=1}^{2k+1}c_{i}\absv{w_{i}}^{2}\right)^{\beta k}}\dd\mu_{2k+1}(\bsw)	\\
	\leq& C\norm{\mu_{2k+1}}_{\infty}\prod_{i=1}^{2k+1}c_{i}^{-\beta/2}\int_{\K^{k+1}}\int_{\K^{k}}\frac{\lone(\norm{\bsx}\leq 1)\lone(\norm{\bsy}\leq \kappa)}{(\norm{\bsx}^{2}+\norm{\bsy}^{2})^{\beta k}}\dd^{\beta k}\bsx\dd^{\beta(k+1)}\bsy	\\
	\leq&C\norm{\mu_{2k+1}}_{\infty} \prod_{i=1}^{2k+1}c_{i}^{-\beta/2}\int_{0}^{\kappa}\int_{0}^{1}\frac{s^{\beta k-1} t^{\beta(k+1)-1}}{(s^{2}+t^{2})^{\beta k}}\dd s\dd t	
	\leq C\norm{\mu_{2k+1}}_{\infty}\kappa^{\beta}\prod_{i=1}^{2k+1}c_{i}^{-\beta/2}.
\end{aligned}
\eeq

	The fact that $\norm{\mu_{p}}_{\infty}\leq C(p,\frb)$ for any $p\leq N$ is a direct consequence of the following lemma.
	\begin{lem}[{\cite[Theorem 1.1]{Rudelson-Vershynin2015}}]\label{lem:dens_proj}
		Let $\wt{p}\leq \wt{N}$ be positive integers, $\frb>0$, and $\wt\bsb=(\wt{b}_{i})_{i\in\bbrktt{N}}\in\bbR^{\wt{N}}$ be a random vector with independent components. If the densities of $\wt{b}_{i}$ are bounded by $\frb$, then the density of $\wt{P}\wt{\bsb}$ on $\wt{P}\R^{N}$ is bounded by $(C\frb)^{\wt{p}}$ for any orthogonal projection $\wt{P}$ on $\bbR^{N}$ with $\mathrm{rank}\,\wt{P}=\wt{p}$ where $C$ is an absolute constant.
	\end{lem}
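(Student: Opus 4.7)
The plan is to reduce the density bound to a Fourier-analytic estimate that separates the one-dimensional regularity of each coordinate from the geometry of the projection, and then to invoke a Brascamp--Lieb-type inequality to absorb the high-dimensional integration. First, by translating $\wt\bsb$ by an arbitrary fixed vector, it suffices to bound the density $f_{\wt P \wt \bsb}$ at the origin. Writing $\wt P = V V^T$ with $V \in \R^{\wt N \times \wt p}$ an isometry, the density of $\wt P \wt \bsb$ on the range of $\wt P$ is identified with the density $g$ on $\R^{\wt p}$ of $Y := V^T \wt\bsb$, so the goal reduces to showing $\|g\|_\infty \leq (C\frb)^{\wt p}$.

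Applying Fourier inversion on $\R^{\wt p}$ yields
\[
g(0) \leq (2\pi)^{-\wt p} \int_{\R^{\wt p}} \prod_{i=1}^{\wt N} \bigl|\hat h_i\bigl(\langle v_i,\eta\rangle\bigr)\bigr| \, d\eta,
\]
where $v_i \in \R^{\wt p}$ is the $i$-th row of $V$. The crucial structural fact is that these rows form a Parseval frame: $\sum_i v_i v_i^T = V^T V = I_{\wt p}$. The only information available about each $\hat h_i$ is the trivial $L^\infty$ bound $|\hat h_i| \leq 1$ together with Plancherel's identity, which combined with $h_i \leq \frb$ yields $\|\hat h_i\|_2^2 = 2\pi \|h_i\|_2^2 \leq 2\pi\frb$.

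The main step is then to establish a dimension-free estimate of the form
\[
\int_{\R^{\wt p}} \prod_{i=1}^{\wt N} \varphi_i(\langle v_i,\eta\rangle) \, d\eta \leq (C\frb)^{\wt p},
\]
valid for all Parseval frames $\{v_i\}$ in $\R^{\wt p}$ and all measurable $\varphi_i : \R \to [0,1]$ with $\|\varphi_i\|_2^2 \leq 2\pi\frb$. This has the flavour of the geometric Brascamp--Lieb (Ball--Barthe) inequality, whose standard form $\int \prod \varphi_i^{c_i}(\langle v_i,\eta\rangle)\,d\eta \leq \prod\|\varphi_i\|_1^{c_i}$ under $\sum c_i v_i v_i^T = I$ is not directly applicable because we do not control $\|\hat h_i\|_1$ (which may well be infinite). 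The main obstacle is precisely bridging this gap: my strategy would be the standard dyadic decomposition — replace each $|\hat h_i|$ by a sum of scaled indicators on the super-level sets $\{|\hat h_i|\geq 2^{-k}\}$ whose Lebesgue measures are controlled via the $L^2$ bound $2\pi\frb$, apply Ball's inequality to each dyadic piece (where the $L^1$ hypothesis is trivially satisfied), and sum the resulting geometric series. The absolute constant $C$ then comes entirely from this summation, and the Parseval identity ensures that the exponent of $\frb$ is exactly $\wt p$ with no residual dependence on $\wt N$.
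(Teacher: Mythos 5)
This lemma is not proved in the paper---it is quoted verbatim from \cite[Theorem~1.1]{Rudelson-Vershynin2015}---so I am judging your sketch as a reconstruction of that theorem. You have correctly identified the Brascamp--Lieb (Ball--Barthe) inequality as the engine, and the Parseval-frame identity $\sum_i v_iv_i\tp=I_{\wt p}$ is the right structural input. But the Fourier side is the wrong side of Plancherel to run the argument on, and the dyadic patch does not close the gap. The inversion bound $g(0)\le(2\pi)^{-\wt p}\int_{\R^{\wt p}}|\hat g|$ is already vacuous when $\wt p=\wt N$: there $\hat g(\eta)=\prod_i\hat h_i(\eta_i)$ and $\int|\hat g|=\prod_i\int_\R|\hat h_i|$ diverges whenever a single $\hat h_i\notin L^1(\R)$, which is generic under the mere assumption $\|h_i\|_\infty\le\frb$ (e.g.\ $h_i$ a normalised indicator), even though the target bound $\frb^{\wt N}$ is trivial in that case. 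Quantitatively, with weights $c_i=\|v_i\|^2\in(0,1]$ and $\sum_ic_i=\wt p$, applying Ball--Barthe to each dyadic piece $\lone_{A_{i,k}}$ (with $|A_{i,k}|\lesssim\frb\,4^k$ from the $L^2$ bound) produces the factor $\prod_i(\frb/\|v_i\|)^{c_i}\sum_{k\ge0}2^{k(2c_i-1)}$. The inner series diverges whenever $c_i\ge\tfrac12$, which cannot be ruled out; even on the light indices each geometric-series factor exceeds $2$, so the product accumulates a spurious $2^{\wt N}$; and the prefactor $\prod_i\|v_i\|^{-c_i}=\exp(-\tfrac12\sum_ic_i\log c_i)$ can be as large as $(\wt N/\wt p)^{\wt p/2}$, again not $O(C^{\wt p})$.

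The argument that closes stays in physical space. By the coarea formula, $g(0)=\int_{\ker\wt P}\prod_ih_i(z_i)\,dz$. Parametrising $\ker\wt P$ by an isometry with rows $t_i\in\R^{\wt N-\wt p}$ gives $\sum_it_it_i\tp=I_{\wt N-\wt p}$, and the resulting weights $d_i=\|t_i\|^2=1-\|v_i\|^2$ sum to $\wt N-\wt p$, the \emph{complementary} count. Writing $h_i\le h_i^{d_i}\frb^{1-d_i}$ and applying Ball--Barthe with the exact $L^1$ normalisation $\|h_i\|_1=1$ yields
\[
g(0)\ \le\ \frb^{\wt N-\sum_id_i}\prod_i\|t_i\|^{-d_i}\ =\ \frb^{\wt p}\exp\Big(\tfrac12\sum_id_i\log(1/d_i)\Big),
\]
and the elementary inequality $d\log(1/d)\le1-d$ on $(0,1]$ gives $\sum_id_i\log(1/d_i)\le\sum_i(1-d_i)=\wt p$, i.e.\ $g(0)\le(\e{1/2}\frb)^{\wt p}$. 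The two ingredients your sketch lacks are precisely this exact $L^1$ normalisation of the one-dimensional densities (which the characteristic functions do not enjoy) and the complementary weight identity $\sum_id_i=\wt N-\wt p$; together they tame the constant to $C^{\wt p}$ with no residual $\wt N$-dependence.
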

	In the real case, we apply Lemma \ref{lem:dens_proj} with $\wt{p}=p$, $\wt{N}=N$, $\wt{\bsb}=\bsb,$ and
		\beq
		\wt{P}=U\adj \begin{pmatrix} I_{p} & O \\ O & O\end{pmatrix} U,
	\eeq
	to find that the density of $\wt{P}\wt{\bsb}$ is bounded on $\wt{P}\R^{N}$. Since $\mu_{p}$ is the law of $U\wt{P}\wt{\bsb}$ on $U\wt{P}\R^{N}=(\R^{p},\boldsymbol{0})$, we immediately have $\norm{\mu_{p}}_{\infty}\leq (C\frb)^{p}$.
	To apply Lemma \ref{lem:dens_proj} in the complex case, we employ the following notations: For matrices $A\in\C^{d_{1}\times d_{2}}$, we define
	\beq\label{eq:def_caJ}
	\caJ[A]\deq\begin{pmatrix}
		\Re [A] &-\Im [A] \\\Im [A] & \Re [A]
	\end{pmatrix}\in\R^{2d_{1}\times 2d_{2}}.
	\eeq
	where $\Re [A]$ and $\Im[A]$ are the entrywise real and imaginary parts of $A$ defined in \eqref{eq:def_re}.
	With a slight abuse of notation, when $\bsv\in\C^{d}$ is a vector, we define $\caJ[\bsv]\deq(\re[\bsv]\tp,\im[\bsv]\tp)\tp\in\R^{2d}$. Note that $\caJ$ is an $\R$-linear algebra homomorphism in the sense that, for all $A\in\C^{d_{1}\times d_{2}},B\in\C^{d_{2}\times d_{3}}$, and $\bsv\in\C^{d_{2}}$,
	\beq
		\caJ[AB]=\caJ[A]\caJ[B],\qquad \caJ[A\bsv]=\caJ[A]\caJ[\bsv],\qquad \caJ[A\adj]=\caJ[A]\tp.
	\eeq
	Now we apply Lemma \ref{lem:dens_proj} with the choices $\wt{p}=2p$, $\wt{N}=2N$, $\wt{\bsb}=\caJ[\bsb]$, and
	\beq
		\wt{P}=\caJ[U\adj]\caJ\left[\begin{pmatrix} I_{p} & O \\ O&O\end{pmatrix}\right]\caJ[U].
	\eeq
	Noticing that $\caJ[U]\in\R^{2N\times 2N}$ is a real orthogonal matrix, we immediately find $\norm{\mu_{p}}_{\infty}\leq (C\frb)^{2p}$.

Combining \eqref{eq:S1_S2c}, \eqref{eq:S1_S2}, $\norm{\mu_{k}}_{\infty},\norm{\mu_{2k+1}}_{\infty}=O(1)$, and optimizing for $\kappa$ gives
\beqs
	\E\left(\sum_{i=1}^{2k+1}c_{i}\absv{\bse_{i}\adj U\bsb}^{2}\right)^{-\beta k}\leq 1+C\left(\prod_{i=1}^{k}c_{i}^{-\beta/2}\right)\left(\kappa^{-\beta k}+\kappa^{\beta}\prod_{i=k+1}^{2k+1}c_{i}^{-\beta/2}\right)
	\leq 1+C\prod_{i=1}^{k}c_{i}^{-\beta/2}\prod_{i=k+1}^{2k+1}c_{i}^{-k\beta/(2(k+1))}.
\eeqs
This completes the proof of Lemma \ref{lem:tech}.
\end{proof}

\begin{proof}[Proof of Lemma \ref{lem:lambdak}]
	Recall that the goal is to prove
	\beq\label{eq:lambda_k_1}
	\E[(1+N\lambda_{3k})^{2k^{2}}]\leq C(k,\delta,\tau,\bsfrm)N^{\delta}.
	\eeq
	Observe that we may assume that $N$ is sufficiently large, as long as the threshold depends on the same parameters as $C$. Recall the definition of $\Xi_{z}$ from \eqref{eq:rigid}. We apply Lemma \ref{lem:ll} with the choices $\epsilon=\xi=\delta/(100k^{2})$ and $D=100k^{2}$, so that $[\Xi_{z}(\epsilon,\xi)^{c}]\leq N^{-D}$ holds for all $\absv{z}\leq 1-\tau$. Then, on the event $\Xi_{z}(\epsilon,\xi)$ we have
	\beqs
	N\lambda_{3k}^{z}\leq N\lambda_{\lfloor N^{\epsilon+\xi}\rfloor}^{z}\leq N^{1+\epsilon}\eta\leq N^{1+2\epsilon},
	\eeqs
	hence
	\beq\label{eq:lambda_k_crude_1}
	\E\lone_{\Xi_{z}(\epsilon,\xi)}(1+N\lambda_{3k}^{z})^{2k^{2}}\leq N^{5k^{2}\epsilon}.
	\eeq
	On the complementary event $\Xi_{z}(\epsilon,\xi)^{c}$ we use Cauchy-Schwarz and $\lambda_{N}^{z}=\norm{A+X-z}\leq\norm{X}+\absv{z}+\norm{\frK}$ to get
	\beq
	\E\lone_{\Xi_{z}(\epsilon,\xi)^{c}}(1+N\lambda_{3k}^{z})^{2k^{2}} \leq \P[\Xi_{z}(\epsilon,\xi)^{c}]^{1/2} \E[(1+N\lambda_{N}^{z})^{4k^{2}}]^{1/2}\leq N^{2k^{2}-D/2}(1+\norm{\norm{X}}_{4k^{2}})^{2k^{2}}.
	\eeq
	Then we finally use
	\beq
	\E[\norm{X}^{4k^{2}}]\leq \E (\Tr XX\adj)^{2k^{2}}\leq N^{2k^{2}}\frm_{4k^{2}}
	\eeq
	so that
	\beq\label{eq:lambda_k_crude_2}
	\E\lone_{\Xi_{z}(\epsilon,\xi)^{c}}(1+N\lambda_{3k}^{z})^{2k^{2}}\leq N^{3k^{2}-D/2} \frm_{4k^{2}}^{1/2}.
	\eeq
	Substituting the definitions of $\epsilon,\xi,D$ into \eqref{eq:lambda_k_crude_1} and \eqref{eq:lambda_k_crude_2} proves \eqref{eq:lambda_k}, concluding the proof of Lemma \ref{lem:lambdak}.
\end{proof}

\section{Proof of Theorem \ref{thm:s_weak}}\label{sec:s_weak}
The proof of Theorem \ref{thm:s_weak} follows a similar strategy as in Section \ref{sec:s_strong} for the proof of Theorem \ref{thm:s_strong} but with some nontrivial modifications that will be summarized in Remark \ref{rem:strong_vs_weak}. The key step again is to prove \eqref{eq:ind} but this time with the deterministic choice $\frX_{\caI}=C^{\absv{\caI}}\absv{\log \eta}^{\absv{\caI}}$; we state this in the next proposition. Recall the definition of $\beta$ from \eqref{eq:def_beta}.
\begin{prop}\label{prop:repel_weak}
	Let $k,N\in\N$ with ${\Cb 2\leq k\leq N}$, $X$ be an $(N\times N)$ regular 
	\Cb complex or real \nc matrix, and $A\in\C^{N\times N}$. 
	Then there exists a constant $C\equiv C(k,\frb)>0$ such that 
	\beq\label{eq:repel_weak}
		\P[\Xi_{k}^{(\caI)}]\leq C\absv{\log \eta}(N\eta)^{\beta k}\frac{1}{N-\absv{\caI}}\sum_{i\in\bbrktt{N}\setminus\caI}\P[\Xi_{k}^{(\caI\cup\{i\})}] \qquad \forall \eta\in[0,N^{-1}],
	\eeq
	holds for all $\caI\subset\bbrktt{N}$ with $\absv{\caI}\leq k-1$. {\Cb  If $X$ is complex, the same result also holds
	for  $k=1$.}
\end{prop}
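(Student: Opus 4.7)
My plan is to adapt the strategy used in the strong version, Proposition~\ref{prop:repel}, replacing the key anti-concentration step by a weaker variant that produces a single logarithmic factor. First, by the same row permutation argument that opens the proof of Proposition~\ref{prop:repel}, I reduce to the canonical case $\caI=\bbrktt{j-1}$ for some $j\in\bbrktt{k}$. Next, using the deterministic lower bound $\sum_{i\geq j}\im G_{ii}^{(j-1)}(\ii\eta)\geq k/(2\eta)$ valid on $\Xi_k^{(j-1)}$, raising it to the $\beta k$-th power and applying Jensen's inequality (combined with $\Xi_k^{(j-1)}\subset\Xi_k^{(j)}$), I obtain
\[
\P[\Xi_k^{(j-1)}]\leq \frac{C(N\eta)^{\beta k}}{N-j+1}\sum_{i\geq j}\E\lone_{\Xi_k^{(j)}}(\im G_{ii}^{(j-1)})^{\beta k}.
\]
A further permutation in $i$ together with conditioning on $\caF^{(j)}$ (which contains $\lone_{\Xi_k^{(j)}}$) then reduces the proof to the pointwise estimate
\[
\E^{(j)}\bigl(\im G_{jj}^{(j-1)}(\ii\eta)\bigr)^{\beta k}\leq C|\log\eta|
\]
on the event $\Xi_k^{(j)}$.

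From here I apply the Schur complement with respect to the $(j,j)$-entry exactly as in \eqref{eq:Schur_minor}, which gives $\im G_{jj}^{(j-1)}=(\eta+S)^{-1}$ with $S=\sum_{i=1}^N c_i^{(j)}|w_i^{(j)}|^2$ and $c_i^{(j)}\geq 1/(2N\eta)$ for $i\leq k$. At this point the strong version's Lemma~\ref{lem:tech} produces the factor $(1+N\lambda_{2k+1}^{(j)})^{\beta k}$, which is unavailable here because we impose no norm bound on $A$ and consequently have no effective control on $\lambda_{2k+1}^{(j)}$. The proof therefore requires an alternative anti-concentration lemma of the form
\[
\E^{(j)}\bigl(\eta+\textstyle\sum_{i=1}^N c_i^{(j)}|w_i^{(j)}|^2\bigr)^{-\beta k}\leq C|\log\eta|,
\]
holding uniformly in the realizations of the minor on $\Xi_k^{(j)}$.

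The hard part will be proving this anti-concentration lemma with the correct logarithmic dependence. The naive approach---applying Lemma~\ref{lem:dens_proj} to the projection $(w_1^{(j)},\ldots,w_k^{(j)})$ and retaining only the first $k$ terms of $S$---yields merely a bound of order $N^k$, which is far too crude. To recover the logarithmic bound one must keep contributions from \emph{all} indices $i\in\bbrktt{N}$ in $S$: although the coefficients $c_i^{(j)}$ for $i>k$ can individually be small, the Cauchy interlacing $\lambda_i^{(j-1)}\leq\lambda_{i+1}^{(j)}$ prevents them from all being simultaneously degenerate, so the polynomial divergence should turn into a logarithmic one. I expect this step to proceed via a dyadic decomposition in $|\bsw^{(j)}|$ with applications of Lemma~\ref{lem:dens_proj} on successively higher-rank projections at each scale. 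The restriction $k\geq 2$ in the real case enters precisely here: for $k=1$ with real $X$ and real $A$ the sharp small-ball bound for $\lambda_1$ is only linear in $s$, so the required $(N\eta)^\beta=N\eta$ anti-concentration simply cannot hold at this level of generality, and the genuine improvement to quadratic decay for real $X$ with complex $A$ is handled separately in Theorem~\ref{thm:s_real_weak}. Once the anti-concentration lemma is in hand, substituting its pointwise bound back into the display of the first paragraph produces \eqref{eq:repel_weak}.
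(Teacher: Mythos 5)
There is a genuine gap, and it traces to a single choice that cascades through the whole argument: you raise the deterministic inequality $\sum_{i\ge j}\im G^{(j-1)}_{ii}\ge 1/(2\eta)$ to the power $\beta k$, whereas the paper raises it to $\beta k/2$. That halving is the central trick of this proof. Raising to $\beta k/2$ produces only $(N\eta)^{\beta k/2}$ from Jensen; the remaining $(N\eta)^{\beta k/2}$ in \eqref{eq:repel_weak} is then recovered by pulling the lower bound $c_i^{(j)}\ge 1/(2N\eta)$, $i\le k$, out of the denominator (this is \eqref{eq:repel_tech_weak_1}). What is left to prove is $\E^{(j)}\bigl(N\eta^2+\sum_{i\le k}|w_i^{(j)}|^2\bigr)^{-\beta k/2}\lesssim|\log\eta|$ --- precisely the \emph{marginal} integral: $\beta k$ real degrees of freedom against exponent $\beta k/2$ gives a single logarithm. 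With your choice of exponent $\beta k$, the integral over $k$ coordinates is supercritical, which is exactly the $N^{\beta k/2}$ overshoot you compute. Your proposed remedy --- keep all $N$ terms of $S$ and argue via Cauchy interlacing and a dyadic decomposition that the uncontrolled $c_i^{(j)}$, $i>k$, must help --- does not work here. Conditionally on $\caF^{(j)}$ the singular values $\lambda_i^{(j)}$, $i>k$, are fixed, and interlacing imposes no lower bound on them; under the mere regularity hypothesis (no i.i.d. structure, no norm bound on $A$, possibly heavy tails) there is no rigidity statement available, so $c_i^{(j)}$ for $i>k$ can be simultaneously tiny. Using information beyond the first $k$ indices is exactly what Theorem~\ref{thm:s_strong} does, and it pays for it with the i.i.d., moment, norm and bulk hypotheses. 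Theorem~\ref{thm:s_weak} avoids those hypotheses precisely by \emph{not} needing those terms.

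Two further issues. First, your sketch completely bypasses what is actually the hardest new step in this proposition: for real $X$ but \emph{complex} $A$, the singular vectors $\bsv_i^{(j)}$ of the minor are complex, so one cannot simply apply Lemma~\ref{lem:dens_proj} to a real orthogonal projection as in the strong version. The paper needs the entirely new Lemma~\ref{lem:s_R_C_R} to show that the realified matrix $Q=\caJ[P_kU]\begin{pmatrix}I_N\\O\end{pmatrix}$ retains $k$ singular values bounded below by $(k+1)^{-1/2}$, so that the projection $\wt{P}=\sum_{i>k}\bsy_i\bsy_i\tp$ feeds into Lemma~\ref{lem:dens_proj} with the right rank; this is why Proposition~\ref{prop:repel_weak} does not require $A$ real even when $X$ is, unlike Proposition~\ref{prop:repel}. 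Second, the reason for the $k\ge2$ restriction in the real case is not the one you give. It has nothing to do with the optimal rate for $\lambda_1$; it is simply that the paper applies Jensen's inequality with exponent $\beta k/2$, which requires $\beta k/2\ge1$, i.e.\ $k\ge2$ when $\beta=1$. (For $k=1$ real the extension with a genuinely complex shift is indeed Theorem~\ref{thm:s_real_weak}, but the mechanism there is Lemma~\ref{lem:real_1}, not anything along the lines you describe.)
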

Note that Proposition \ref{prop:repel_weak}, in contrast to Proposition \ref{prop:repel}, does not assume that $A$ is real even if $X$ is. As we will see below, complex $A$ poses additional technicalities in its proof when $X$ is real. Given Proposition \ref{prop:repel_weak}, Theorem \ref{thm:s_weak} follows directly by substituting $\eta= s/N$ and  $\frX_{\caI}=C^{\absv{\caI}}\absv{\log \eta}^{\absv{\caI}}$ into \eqref{eq:ind_appl} where $C$ is the constant from \eqref{eq:repel_weak}. Thus we move on to the proof of Proposition \ref{prop:repel_weak}.

\begin{proof}[Proof of Proposition \ref{prop:repel_weak}]
	We make two modifications to the proof Proposition~\ref{prop:repel} in order to prove \eqref{eq:repel_weak}. Firstly in \eqref{eq:repel_G}, we raise \eqref{eq:repel_G_det} to the $(\beta k/2)$-th power instead of $(\beta k)$, and we do not smuggle in the factor $(1+N\lambda_{m}^{(j-1)})^{n}$ in the following steps. {\Cb Notice here that we need to 
	take $k\geq 2$ in the real case, so that we may apply Jensen's inequality in \eqref{eq:Jensen} with the power $k/2$.} Secondly in \eqref{eq:Schur_minor_strong}, we keep the first term $\eta$ and only use the first $k$ coordinates of $\bsw^{(j)}$ instead of the first $(2k+1)$. After these modifications, it suffices to prove
	\beq\label{eq:repel_tech_weak_0}
	\lone_{\Xi_{k}^{(j)}}\E^{(j)}\left(\eta+\sum_{i\in\bbrktt{k}}c_{i}^{(j)}\absv{w_{i}^{(j)}}^{2}\right)^{-\beta k/2}\leq C(N\eta)^{\beta k/2}\absv{\log\eta}
	\eeq
	for a constant $C\equiv C(k,\frb)>0$.
	
	Recall from \eqref{eq:c_size} that $c_{i}^{(j)}\geq 1/(2N\eta)$ for all $i\in\bbrktt{k}$ on the event $\Xi_{k}^{(j)}$. Thus we have
	\beq\label{eq:repel_tech_weak_1}
		\lone_{\Xi_{k}^{(j)}}\E^{(j)}\left(\eta+\sum_{i\in\bbrktt{N}}c_{i}^{(j)}\absv{w_{i}^{(j)}}^{2}\right)^{-\beta k/2}\leq C(N\eta)^{\beta k/2} \lone_{\Xi_{k}^{(j)}}\E^{(j)}\left(N\eta^{2}+\sum_{i\in\bbrktt{k}}\absv{w_{i}^{(j)}}^{2}\right)^{-\beta k/2},
	\eeq
	where $C>0$ depends only on $k$. Therefore \eqref{eq:repel_tech_weak_0} follows once we prove
	\beq\label{eq:repel_tech_weak}
		\E^{(j)}\left(N\eta^{2}+\sum_{i\in\bbrktt{k}}\absv{w_{i}^{(j)}}^{2}\right)^{-\beta k/2}\leq C \absv{\log \eta}
	\eeq
	for a constant $C$ depending only on $k$ and $\frb$. We state this estimate as the next lemma. To simplify the presentation we define $P_{k}:\C^{N}\to\C^{k}$ to be the projection onto the first $k$ components, that is,
	\beq\label{eq:def_P}
	P_{k}\deq \begin{pmatrix} I_{k} & \vert & O \end{pmatrix}\in\C^{(k\times N)}.
	\eeq
	\begin{lem}\label{lem:tech_weak}
		Let $k,N\in\N$ with $k\leq N$, $\bbK=\R$ or $\C$, and $\bsb\in\K^{N}$ be a random vector with independent components satisfying the same assumptions as in Lemma \ref{lem:tech}. Then there exists a constant $\frC_{2}\equiv \frC_{2}(k,\frb)>0$ such that the following holds for any $c_{0}\in(0,1/2)$, $\bsa\in \C^{N}$, and a unitary matrix $U\in\C^{N\times N}$;
		\beq\label{eq:tech_weak}
			\E\left(c_{0}+\norm{P_{k}U(\bsb+\bsa)}^{2}\right)^{-\beta k/2}\leq \frC_{2}(1+\absv{\log c_{0}}).
		\eeq
	\end{lem}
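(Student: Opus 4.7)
The plan is to convert the expectation into a Lebesgue integral using the bounded density of $Y\deq P_{k}U(\bsb+\bsa)$, and then to extract the logarithmic singularity in $c_{0}$ via a layer-cake decomposition. Identifying $\C^{k}\cong \R^{2k}$, the law of $Y$ is supported (after the deterministic shift by $P_{k}U\bsa$) on the real linear subspace $V\subset \R^{2k}$ which is the image of the real-linear map $\bsb\mapsto(\Re[P_{k}U\bsb],\Im[P_{k}U\bsb])$; set $d\deq \dim_{\R}V$. Applying Lemma \ref{lem:dens_proj} exactly as in the bound on $\norm{\mu_{p}}_{\infty}$ in the proof of Lemma \ref{lem:tech}, the density of $Y$ with respect to $d$-dimensional Lebesgue measure on $V$ is bounded by some $M\equiv M(k,\frb)$.

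The key dimension estimate is $d\geq\beta k$. When $\bbK=\C$ ($\beta=2$), the identity $(P_{k}U)(P_{k}U)\adj=I_{k}$ gives the surjectivity $P_{k}U\C^{N}=\C^{k}$, hence $d=2k=\beta k$. When $\bbK=\R$ ($\beta=1$), the decomposition $\C^{N}=\R^{N}\oplus \ii\R^{N}$ together with the same surjectivity implies $P_{k}U\R^{N}+\ii P_{k}U\R^{N}=\C^{k}$, and comparing real dimensions in $\C^{k}\cong\R^{2k}$ yields $2d\geq 2k$, i.e. $d\geq k=\beta k$.

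With these ingredients, integration by parts (equivalently, layer-cake) gives
\beqs
	\E(c_{0}+\norm{Y}^{2})^{-\beta k/2}=\int_{0}^{\infty}\frac{\beta k\,r}{(c_{0}+r^{2})^{\beta k/2+1}}\P[\norm{Y}\leq r]\dd r,
\eeqs
and the density bound together with the trivial bound $\P[\norm{Y}\leq r]\leq 1$ yields $\P[\norm{Y}\leq r]\leq \min(1,CMr^{d})$ for a constant $C$ depending only on $k$. Splitting the integral at $R\deq (CM)^{-1/d}$, which is of order unity in $(k,\frb)$, the tail $r>R$ integrates to $(c_{0}+R^{2})^{-\beta k/2}=O_{k,\frb}(1)$; the head $r<R$, after the substitution $u=r^{2}$ and the crude bound $u\leq c_{0}+u$, reduces to a constant multiple of $\int_{0}^{R^{2}}(c_{0}+u)^{d/2-\beta k/2-1}\dd u$, which equals $\log(1+R^{2}/c_{0})\lesssim 1+\absv{\log c_{0}}$ in the critical case $d=\beta k$ and is bounded by a constant otherwise. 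The main obstacle is precisely the dimension lower bound $d\geq \beta k$ in the real case with complex $U$: without it the head would yield a polynomial singularity in $c_{0}$ rather than the logarithmic one, which is essential for the final estimate \eqref{eq:repel_weak} and hence for Theorem \ref{thm:s_weak}.
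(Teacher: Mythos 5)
Your reduction to a density/layer-cake computation is the right idea, and your dimension count $d\ge\beta k$ is correct. However, there is a genuine gap at the step where you claim that, by applying Lemma~\ref{lem:dens_proj}, the density of $Y=P_kU(\bsb+\bsa)$ with respect to $d$-dimensional Lebesgue measure on $V=\ran_\R\big(\bsb\mapsto\caJ[P_kU\bsb]\big)$ is bounded by $M(k,\frb)$. In the real case with complex $U$, the real-linear map $Q\deq\caJ[P_k]\caJ[U]\begin{pmatrix}I_N\\O\end{pmatrix}:\R^N\to\R^{2k}$ is \emph{not} an orthogonal projection composed with an isometry, so Lemma~\ref{lem:dens_proj} does not apply directly. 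Write the SVD $Q=\sum_i m_i\bsx_i\bsy_i\tp$; the density of $Q\bsb$ on $V$ is, after the change of variables, of size $(C\frb)^d\big/\prod_{i:m_i>0}m_i$. Hence the density (equivalently, the small-ball bound $\P[\|Y\|\le r]\lesssim r^d$) is controlled only if one has a lower bound on the relevant singular values of $Q$, and your dimension statement $d\ge\beta k$ only says $m_{k+1}>0$, not $m_{k+1}\gtrsim 1$. If $\mathrm{rank}\,Q>k$, the extra nonzero singular values $m_k, m_{k-1},\dots$ can be arbitrarily small, making the density on $V$ unbounded and breaking the estimate you use.

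This is exactly where the paper's Lemma~\ref{lem:s_R_C_R} enters: using $\sum_{i=1}^{2k}m_i^2=\Tr P_k\Re[UU\adj]P_k\tp=k$ together with $m_{2k}\le1$, the paper derives the \emph{quantitative} bound $m_{k+1}\ge 1/\sqrt{k+1}$, uniform over all complex unitary $U$. It then deliberately does not use the full range $V$; it composes $Q$ only with the projection onto the \emph{top $k$} singular directions (the matrices $R$, $\wt{P}$, $V$, $D$ in \eqref{eq:def_R}), precisely so that all singular values in play are $\ge 1/\sqrt{k+1}$. The density bound then follows cleanly from Lemma~\ref{lem:dens_proj} applied to the orthogonal projection $\wt{P}$, and the $\det D\gtrsim 1$ factor is controlled. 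In short: the missing ingredient in your argument is not the dimension lower bound $d\ge\beta k$, which you correctly identify and prove, but the uniform lower bound on $m_{k+1}$ (and the corresponding restriction to a $k$-dimensional well-conditioned subspace rather than the full range $V$). Without it, the crucial step $\P[\|Y\|\le r]\le\min(1,CMr^{\beta k})$ is not justified.
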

	Note that when $\bbK=\R$, the vector $PU(\bsb+\bsa)$ in \eqref{eq:tech_weak} involves both real and complex matrices (or vectors) in contrast to \eqref{eq:tech}. Hereafter, any binary operation concerning both real and complex matrices treats $\R$ as canonically embedded in $\C$.
	
	As in Section \ref{sec:s_strong}, we postpone the proof of Lemma \ref{lem:tech_weak} and first use this lemma to conclude Proposition \ref{prop:repel_weak}. We choose $c_{0}=N\eta^{2}$, $U=U^{(j)}$, $\bsb=\sqrt{N}X\adj \bse_{j}$, and $\bsa=\sqrt{N}A\adj\bse_{j}$. With these choices we have
	\beq
		P_{k}U(\bsb+\bsa)=\sqrt{N}P_{k}U^{(j)}(X+A)\adj\bse_{j}=PU^{(j)}\bsb^{(j)}=P\bsw^{(j)},
	\eeq
	so that the left-hand side of \eqref{eq:tech_weak} is exactly that of \eqref{eq:repel_tech_weak}. Following similar arguments as in the proof of Proposition \ref{prop:repel_weak}, these choices satisfy all assumptions of Lemma \ref{lem:tech_weak} so that
	\beq
		\E^{(j)}\left(N\eta^{2}+\sum_{i\in\bbrktt{k}}\absv{w_{i}^{(j)}}^{2}\right)^{-\beta k/2}\leq \frC_{2}(1+\absv{\log (N\eta^{2})})\leq 10\frC_{2}\absv{\log \eta},
	\eeq
	where we used $\eta\in[0,N^{-1}]$. This proves \eqref{eq:repel_tech_weak}, concluding the proof of Proposition \ref{prop:repel_weak}.
\end{proof}

Next we will present the proof of Lemma \ref{lem:tech_weak}. Recall that in Lemma \ref{lem:tech_weak} the unitary matrix $U$ is not real orthogonal even if $X$ is real, in contrast to Lemma \ref{lem:tech}. Hence the proof of \eqref{eq:repel_tech_weak} for the real case does not simply follow from the complex case just after some changes in the exponents. Nonetheless, we still expect that the vector $P_{k}U\bsb$ carries at least half the degrees of freedom compared to the complex case, regardless of $U$. As before, these will be used to regularize the potentially singular expectation in \eqref{eq:tech_weak} even if $c_{0}$ is very small. In other words, we will construct a `projection' $R:\C^{k}\to\R^{k}$ so that $RP_{k}U\bsb\in\R^{k}$ has a continuous distribution. The next lemma is used to construct such an $R$.

	\begin{lem}\label{lem:s_R_C_R}
		Let $U$ be an $N\times N$ complex unitary matrix, $k\in\bbrktt{N}$, $P_{k}$ be given by \eqref{eq:def_P}, and define
		\beq\label{eq:def_Q}
			Q\deq \caJ[P_{k}]\caJ[U]\begin{pmatrix} I_{N} \\ O\end{pmatrix}\in\R^{2k\times N}.
		\eeq
		Then the singular values $m_{1}\leq\cdots\leq m_{2k}=\norm{Q}$ of $Q$ satisfy 
		\beq
			m_{2k}\leq 1,\qquad m_{k+1}\geq \frac{1}{\sqrt{k+1}}.
		\eeq
	\end{lem}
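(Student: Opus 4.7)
The plan is to establish the two bounds by a direct computation of $Q$ followed by a trace/pigeonhole argument.

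First I would unpack the definition of $\caJ$. Writing $U = A' + iB'$ with $A' = \Re[U]$ and $B' = \Im[U]$, and noting that $P_k$ is real so $\caJ[P_k] = P_k \oplus P_k$, a direct matrix multiplication gives
\beq
Q = \caJ[P_k]\caJ[U]\begin{pmatrix} I_N \\ O \end{pmatrix} = \begin{pmatrix} P_k A' \\ P_k B' \end{pmatrix} = \begin{pmatrix} \Re[P_k U] \\ \Im[P_k U] \end{pmatrix} \in \R^{2k\times N}.
\eeq
This is the key reformulation: the rows of $P_k U$ are the first $k$ rows of a unitary matrix, and $Q$ stacks their real and imaginary parts.

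For the upper bound $m_{2k}\le 1$, I would observe that $\caJ[U]$ is an orthogonal $2N\times 2N$ matrix (this follows from $UU^* = I_N$, or equivalently the identities $A'A'^T + B'B'^T = I_N$ and $A'B'^T = B'A'^T$ applied to a direct computation of $\caJ[U]\caJ[U]^T$), while $\caJ[P_k] = P_k\oplus P_k$ has operator norm $1$. Truncating to the first $N$ columns can only decrease the norm, so $\norm{Q}\le 1$.

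For the lower bound, I would compute $\tr(QQ^T)$. Setting $\tilde A := \Re[P_k U]$, $\tilde B := \Im[P_k U]$, the orthonormality of the rows of $P_k U$ (i.e. $(P_k U)(P_k U)^* = I_k$) gives $\tilde A\tilde A^T + \tilde B\tilde B^T = I_k$, hence
\beq
\tr(QQ^T) = \tr(\tilde A\tilde A^T) + \tr(\tilde B\tilde B^T) = \tr(I_k) = k.
\eeq
Therefore $\sum_{i=1}^{2k} m_i^2 = k$. The desired lower bound is now a pigeonhole argument: if $m_{k+1} < 1/\sqrt{k+1}$, then since the $m_i$ are nondecreasing and bounded above by $1$ (by the first step),
\beq
\sum_{i=1}^{2k} m_i^2 < (k+1)\cdot\frac{1}{k+1} + (k-1)\cdot 1 = k,
\eeq
a contradiction. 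Hence $m_{k+1} \ge 1/\sqrt{k+1}$.

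No step looks hard here — the only mild subtlety is identifying $Q$ cleanly from the definition of $\caJ$ and recognising that the orthonormality of the rows of $P_k U$ translates into a trace identity on real matrices. After that, the two bounds follow from the operator norm of an orthogonal matrix and a one-line pigeonhole argument on the squared singular values.
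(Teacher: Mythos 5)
Your proof is correct and follows essentially the same route as the paper: compute $\tr(QQ^\intercal)=k$ from the orthonormality of the rows of $P_kU$, bound $\norm{Q}\le 1$ from the fact that $\caJ[U]$ is orthogonal, and finish with a pigeonhole on the squared singular values. (Minor point: the paper's intermediate display for $Q$ carries a sign $-P_k\Im[U]$ in the second block, apparently a typo; your sign is the correct one, and in any case it does not affect the singular values.)
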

The proof of Lemma \ref{lem:s_R_C_R} is postponed to the end of this section, and we proceed to prove Lemma \ref{lem:tech_weak}.
\begin{proof}[Proof of Lemma \ref{lem:tech_weak}]
	We first prove the complex case, $\beta=2$. One can easily see that the vector $\wt{\bsb}\deq \bsb+\bsa$ has independent components whose densities are bounded by $\frb$, exactly when $\bsb$ does. Since $\norm{PU(\bsb+\bsa)}=\norm{PU\wt{\bsb}}$, we may replace $PU(\bsb+\bsa)$ by $PU\bsb$ in the left-hand side of \eqref{eq:tech_weak}, that is, it suffices to prove
	\beq\label{eq:tech_weak_C}
		\E(c_{0}+\norm{PU\bsb}^{2})^{-k}\leq C(1+\absv{\log c_{0}}).
	\eeq
	Note that this argument using a complex shift $\bsa$ only applies to the case $\K=\C$, and later for the real case we will need to use Lemma \ref{lem:s_R_C_R} instead.
	
	Now we apply similar arguments as in Lemma \ref{lem:tech}; first, define $\mu_{k}$ to be the distribution of $PU\bsb$; for a test function $f:\C^{k}\to \C$ we write
	\beqs
		\int_{\C^{N}}f\dd\mu\deq \int_{\C^{N}}f(PU\bsb)h(\bsb)\dd^{2N}\bsb
	\eeqs
	where $h(\bsb)$ was defined in \eqref{eq:dens}. Then we have
	\beq\label{eq:tech_weak_C_prf}
	\begin{aligned}
		\E\left(c_{0}+\norm{PU\bsb}^{2}\right)^{-k}=&\int_{\C^{k}}\left(c_{0}+\norm{\bsw}^{2}\right)^{-k}\dd\mu_{k}(\bsw)
		\leq 1+\int_{\C^{k}}\frac{\lone(\norm{\bsw}\leq 1)}{(c_{0}+\norm{\bsw}^{2})^{k}}\dd\mu_{k}(\bsw)\\
		\leq& 1+C\norm{\mu_{k}}_{\infty}\int_{\C^{k}}\frac{\lone(\norm{\bsw}\leq 1)}{(c_{0}+\norm{\bsw}^{2})^{k}}\dd^{2k}\bsw
		\leq1+C\norm{\mu_{k}}_{\infty}\int_{0}^{1}\frac{s^{2k-1}}{(c_{0}+s^{2})^{k}}\dd s,
	\end{aligned}
	\eeq
where we used in the first inequality that the integral over $\norm{\bsw}>1$ is less than $1$. Recall that this $\mu_{k}$ is exactly the same as that in the proof of Lemma \ref{lem:tech}, so that $\norm{\mu_{k}}_{\infty}\leq C(k,\frb)$. Since the last integral in \eqref{eq:tech_weak_C_prf} is bounded by $\absv{\log c_{0}}$, therefore we have
\beq
	\E\left(c_{0}+\norm{PU\bsb}^{2}\right)^{-k}\leq C(1+\absv{\log c_{0}}).
\eeq
This proves \eqref{eq:tech_weak_C}, thus concludes Lemma \ref{lem:tech_weak} for the complex case.

Now we move on to the real case. We first notice that 
	\beq\label{eq:PU=Q}
		\caJ[P_{k}U\bsb]=\caJ[P_{k}U]\caJ[\bsb]=\caJ[P_{k}]\caJ[U]\begin{pmatrix} I_{N} \\ O\end{pmatrix}\bsb=Q\bsb\in\R^{2k},
	\eeq
	where $Q$ was defined in \eqref{eq:def_Q}. Consider the singular value decomposition 
	\beq\label{eq:Q_decomp}
		Q=\sum_{i=1}^{2k}m_{i}\bsx_{i}\bsy_{i}\tp,\qquad \bsx_{i}\in\R^{2k},\bsy_{i}\in\R^{N},
	\eeq
	and define
	\beq\label{eq:def_R}\begin{aligned}
		&R\deq \sum_{i=k+1}^{2k}\bsy_{i}\bsx_{i}\tp,&
		&\wt{P}\deq \sum_{i=k+1}^{2k}\bsy_{i}\bsy_{i}\tp,&
		&V\deq \sum_{i=1}^{k}\bse_{i}\bsy_{i+k}\tp\in\R^{k\times N},&
		&D\deq\diag(m_{k+1},\cdots,m_{2k}).
	\end{aligned}\eeq
	Note that $\wt{P}$ is an orthogonal projection on $\R^{N}$, $V$ is a unitary map between $\R^{k}$ and $\spann(\bsy_{k+1},\cdots\bsy_{2k})$, and that
	\beq
		VRQ=\sum_{i=k+1}^{2k}m_{i}\bse_{i}\bsy_{i}\tp=DV\wt{P}.
	\eeq
	Hence, using $\norm{VR}\leq 1$ and $\norm{\caJ[\bsv]}=\norm{\bsv}$ for any complex vector $\bsv$, we obtain
	\beq\begin{aligned}
		\norm{P_{k}U(\bsb+\bsa)}=\norm{\caJ[P_{k}U(\bsb+\bsa)]}\geq \norm{VR\caJ[P_{k}U(\bsb+\bsa)]}	\\
		=\norm{DV(\wt{P}\bsb+V\tp D^{-1}VR\caJ[P_{k}U\bsa])}=:\norm{DV(\wt{P}\bsb+\wt{\bsa})},
	\end{aligned}\eeq
	where we abbreviated 
	\beq\label{eq:def_a}
		\wt{\bsa}=V\tp D^{-1}VR\caJ[P_{k}U\bsa]\in\spann(\bsy_{k+1},\cdots,\bsy_{2k}).
	\eeq
	Thus we have that
	\beq
		(c_{0}+\norm{PU(\bsb+\bsa)}^{2})^{-k/2}\leq (c_{0}+\norm{DV(\wt{P}\bsb+\wt{\bsa})}^{2})^{-k/2}.
	\eeq

Next, we define $\nu_{k}$ to be the distribution (on $\R^{k}$) of $V(\wt{P}\bsb+\wt{\bsa})$, that is,
	\beq
	\int_{\R^{k}}f(\bsx)\dd\nu_{k}(\bsx)=\int_{\R^{N}}f(V(\wt{P}\bsb+\wt{\bsa}))h(\bsb)\dd^{N}\bsb
	\eeq
	where $h(\bsb)$ was defined in \eqref{eq:dens}. Then we may follow the same lines as in \eqref{eq:tech_weak_C_prf} to write
	\beq\label{eq:tech_weak_R_prf}\begin{aligned}
		\E(c_{0}+\norm{PU(\bsb+\bsa)}^{2})^{-k/2}
		\leq & \E(c_{0}+\norm{DV(\wt{P}\bsb+\wt{\bsa})}^{2})^{-k/2}	\\
		=&\int_{\R^{k}}(c_{0}+\norm{D\bsx}^{2})^{-k/2}\dd\nu_{k}(\bsx)	
		\leq 1+\frac{\norm{\nu_{k}}_{\infty}}{(k+1)^{k/2}}\int_{0}^{\infty}\frac{s^{k-1}}{(c_{0}+s^{2})^{k/2}}\dd s
	\end{aligned}\eeq
where we used in the last inequality that $\det D\geq m_{k+1}^{k}\geq (k+1)^{k/2}$. 

Since the last integral on the right-hand side of \eqref{eq:tech_weak_R_prf} is comparable to $\absv{\log c_{0}}$, it suffices to prove $\norm{\nu_{k}}_{\infty}\leq C(k,\frb)$ in order to conclude \eqref{eq:tech_weak} for the real case. To prove this, we apply Lemma \ref{lem:dens_proj} with the choices $\wt{\bsb}=\bsb$ and $\wt{P}$ in \eqref{eq:def_R}. As a result, the densities of $\wt{P}\bsb$ and hence $(\wt{P}\bsb+\wt{\bsa})$ on $\spann(\bsy_{k+1},\cdots,\bsy_{2k})$ are both bounded by $C(k,\frb)$. Since $V$ is an isometry  between $\spann(\bsy_{k+1},\cdots,\bsy_{2k})$ and $\R^{k}$, we immediately find $\norm{\nu_{k}}_{\infty}\leq C(k,\frb)$. This completes the proof of Lemma \ref{lem:tech_weak} modulo Lemma \ref{lem:s_R_C_R}.
\end{proof}

\begin{proof}[Proof of Lemma \ref{lem:s_R_C_R}]
	First of all, $m_{2k}=\norm{Q}\leq 1$ follows directly from 
	\beq
		\norm{Q\bsv}=\norm{\caJ[P_{k}U\bsv]}=\norm{P_{k}U\bsv}\leq \norm{\bsv},\qquad \forall\bsv\in\R^{N}.
	\eeq
	To prove $m_{k+1}\geq (k+1)^{-1/2}$, we use the definition of $\caJ$ to write the matrix $Q$ in \eqref{eq:def_Q} as
	\beq\label{eq:Q_expa}
		Q=\begin{pmatrix} P_{k}\Re[U] \\ -P_{k}\Im[U]\end{pmatrix}\in\R^{2k\times N},
	\eeq
	so that, using the same singular decomposition as in \eqref{eq:Q_decomp},
	\beq\label{eq:def_mi}
	Q Q\tp=\begin{pmatrix} P_{k}\Re[U] \\ -P_{k}\Im[U]\end{pmatrix}\begin{pmatrix}\Re[U]\tp P\tp & -\Im[U]\tp P\tp\end{pmatrix}=\sum_{i=1}^{2k}m_{i}^{2}\bsx_{i}\bsx_{i}\tp.
	\eeq
	Then from $\Re[UU\adj]=\Re[I]=I$ we have
	\beq\begin{aligned}
		\sum_{i=1}^{2k}m_{i}^{2}&=\Tr \begin{pmatrix} P\Re[U] \\ -P\Im[U] \end{pmatrix}\begin{pmatrix} \Re[U]\tp P\tp & -\Im[U]\tp P\tp\end{pmatrix}	\\
		&=\Tr P(\Re[U]\Re[U]\tp+\Im[U]\Im[U]\tp)P\tp
		=\Tr P \Re[UU\adj]P\tp=\Tr PP\tp=k.
	\end{aligned}\eeq
	Hence for each $\ell\in\bbrktt{2k}$ we have
	\beq
	k=\sum_{i=1}^{2k}m_{i}^{2}\leq \ell m_{\ell}^{2}+(2k-\ell)m_{2k}^{2}\leq \ell m_{\ell}^{2}+(2k-\ell),
	\eeq
	where we used in the last inequality that $m_{2k}\leq 1$. Thus for all $\ell\geq k$ we have
	\beq\label{eq:s_R_C_R_k}
	m_{\ell}\geq \sqrt{\frac{\ell-k}{\ell}},\qquad \text{in particular}\quad m_{k+1}\geq \frac{1}{\sqrt{k+1}}.
	\eeq
	This completes the proof of Lemma \ref{lem:s_R_C_R}.
\end{proof}

\begin{rem}\label{rem:strong_vs_weak}
	As one can see from the proofs, the difference between Theorem \ref{thm:s_weak} and \ref{thm:s_strong} is rooted in that between \eqref{eq:repel_tech} and \eqref{eq:repel_tech_weak_0}. Comparing \eqref{eq:repel_tech_weak_0} to \eqref{eq:repel_tech}, we find that the left-hand sides concern different inverse powers of the same random variable $\sum_{i} c_{i}^{(j)} \absv{w_{i}^{(j)}}^{2}$ modulo irrelevant summands. On the other hand, the right-hand side of \eqref{eq:repel_tech_weak_0} has a factor of $(N\eta)^{\beta k/2}$, which is small, whereas that of \eqref{eq:repel_tech} is (roughly) $O(1)$. 
	
	Here we briefly explain how these two different estimates for the inverse powers of the same random variable arise. In fact, when $N\eta\ll1$, \eqref{eq:repel_tech_weak_0} better represents the natural size of this random variable. To see this, we recall \eqref{eq:c_size} and write for general $\bsw\in\C^{N}$ that, on the event $\Xi_{k}^{(j)}$,
	\beq\label{eq:heur_weak}
	\frac{1}{2N\eta}\sum_{i=1}^{k}\absv{w_{i}}^{2}\leq \sum_{i=1}^{k}c_{i}^{(j)}\absv{w_{i}}^{2} \leq\sum_{i\in\bbrktt{N}}c_{i}\absv{w_{i}}^{2}\leq \frac{1}{N\eta}\sum_{i=1}^{k}\absv{w_{i}}^{2} +\frac{1}{N}\sum_{i>k}\frac{\eta}{(\lambda_{i}^{(j)})^{2}+\eta^{2}}\absv{w_{i}}^{2}.
	\eeq
	Typically we take $w_{i}$ to be $O(1)$ random variables with continuous joint distribution, and hence the second term on the right-hand side of \eqref{eq:heur_weak} is roughly of the same size as $\brkt{G(\ii\eta)}$ which is $O(1)$ (see Lemma \ref{lem:gap}). Thus essentially the first $k$ summands determine the size of $\sum_{i}c_{i}^{(j)}\absv{w_{i}^{(j)}}^{2}$, which is in turn comparable to $(N\eta)^{-1}\norm{P\bsw}^{2}$ recalling that the relevant regime is $N\eta\ll 1$. 
	
	Therefore, in effect, one can only use these $\beta k$ degrees of freedom (from $k$ variables in $\bbK$) upon estimating the $m$-th negative moment to regularize the potential singularity. This essentially leads to integrals of the form
	\beq
		\int_{\bbK^{k}}\frac{\lone(\norm{\bsw}\leq 1)}{\norm{\bsw}^{2m}}\dd^{\beta k}\bsw,
	\eeq
	which is finite only if $m<\beta k/2$. When $m$ is exactly $\beta k/2$ this integral has logarithmic singularity, which is responsible for the logarithmic corrections in Theorem \ref{thm:s_weak}. On the other hand, for any $m>\beta k/2$, the \emph{a priori} uncontrolled singular values $\{\lambda_{i}^{(j)}:i>k\}$ affect the $m$-th negative moment of $\sum_{i}c_{i}^{(j)}\absv{w_{i}^{(j)}}^{2}$ via $c_{i}^{(j)}$. The main point of Theorem \ref{thm:s_strong} is that $\lambda_{i}^{(j)}$'s, and hence $c_{i}^{(j)}$'s, for $i>k$ are controlled if replace $A$ with $A-z$ for $z$ in the bulk spectrum and thus effectively contribute to regularizing the integral.
\end{rem}

\section{Proof of Theorem \ref{thm:s_real_weak}}\label{sec:s_R_weak}
{\Cb Now we consider the singular values of real regular matrices, but shifted by genuinely complex matrix $A\in\C^{N\times N}$. More precisely, we show that \eqref{eq:s_R_weak} extends to $k=1$ with the improved rate $s^{2}$ if $\Im[A]$ is strictly positive definite.}

Recall, from the proofs of Theorems \ref{thm:s_weak} and \ref{thm:s_strong} for the complex case with $k=1$, that the scale $s^{2}$ is due to the fact that the random variable $w^{(1)}_{1}=(\bsv_{1}^{(1)})\adj\bsb$ is genuinely complex; more specifically, that the distribution $\mu_{1}$ of $w^{(1)}$ on $\C$ has a bounded density. Here we prove the same statement for the real case when the shift $A$ is genuinely complex. 

Before presenting the proof, we briefly explain which parts of the proof of $\norm{\mu_{1}}_{\infty}=O(1)$ have to be modified compared to those in Sections \ref{sec:s_strong} and \ref{sec:s_weak}. If $A$ is real, then $\bsv_{1}^{(1)}$, the null vector of $J^{1}(X+A)$, is also real and thus $w^{(1)}_{1}$ is real, so that its density $\mu_{1}$ is singular if viewed as a density on $\C$. If the shift $A$ is complex, then $\bsb=\sqrt{N}(X+A)\adj\bse_{1}$ becomes complex but only due to an irrelevant shift. Hence the regularity of $\mu_{1}$ should come from the fact that $\bsv_{1}^{(1)}$ is genuinely complex. Furthermore, whatever estimate we obtain for $\norm{\mu_{1}}_{\infty}$, it should deteriorate as $A$ tends to a real matrix; we have already seen in the proof of Theorem \ref{thm:s_strong} that $w_{1}^{(1)}$ is real when $A$ is. 

The next lemma summarizes the technical input we need for the proof of Theorem \ref{thm:s_real_weak}. It proves that $\bsv^{(1)}_{1}$ is a genuinely complex vector, quantitatively with an explicit dependence on $\Im A=(A-\ol{A})/(2\ii )$.
\begin{lem}\label{lem:real_1}
	Let $Y,B\in\R^{N\times N}$ and $\bsv\in\C^{N}$ be a unit null vector of $J^{(1)}(Y+\ii B)$. Then we have
	\beq\label{eq:real_1}
	\inf_{\theta\in[0,2\pi]}\norm{\re[\e{\ii\theta}\bsv]}^{2}\geq \frac{1}{5}\lambda_{1}(B)^{2}\frac{\norm{J^{(1)}Y\bsw}^{2}}{(\norm{J^{(1)}Y}+\norm{B})^{4}},
	\eeq
	where $\lambda_{1}(B)$ is the smallest singular value of $B$ and $\bsw\in\R^{N}$ is the unit null vector of $J^{(1)}B$.
\end{lem}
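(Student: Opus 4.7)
The plan is to write $\bsv = \bsx + \ii\bsy$ with $\bsx, \bsy \in \R^N$, converting the complex null-vector equation $J^{(1)}(Y+\ii B)\bsv = 0$ into the real coupled system $J^{(1)}Y\bsx = J^{(1)}B\bsy$ and $J^{(1)}Y\bsy = -J^{(1)}B\bsx$. Minimizing $\theta \mapsto \|\cos\theta\,\bsx - \sin\theta\,\bsy\|^{2}$ over $\theta\in[0,2\pi]$ and using $\|\bsv\|=1$ gives
\[
\inf_{\theta}\|\re[\e{\ii\theta}\bsv]\|^{2} \;=\; \tfrac{1}{2} - \sqrt{\tfrac{1}{4} - \Delta} \;\geq\; \Delta,
\]
where $\Delta := \|\bsx\|^{2}\|\bsy\|^{2} - \langle\bsx,\bsy\rangle^{2}$ is the Gram determinant; thus it suffices to bound $\Delta$ from below.

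Next I would exploit the structural role of $\bsw$. Since $\lambda_{1}(B) > 0$ (otherwise the claim is vacuous), Cauchy interlacing yields $\ker(J^{(1)}B) = \R\bsw$ and that $\wt B := J^{(1)}B\vert_{\bsw^{\perp}}$ is invertible with $\|\wt B^{-1}\| \le \lambda_{1}(B)^{-1}$. Writing $\bsx = \alpha\bsw + \bsx^{\perp}$ and $\bsy = \beta\bsw + \bsy^{\perp}$, introducing $T := \wt B^{-1} J^{(1)} Y$, $\bst := T\bsw \in \bsw^{\perp}$, and $\wt T := T\vert_{\bsw^{\perp}}$, the two null-vector equations rearrange to $\bsy^{\perp} = T\bsx$ and $\bsx^{\perp} = -T\bsy$. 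Assuming $I + \wt T^{2}$ is invertible on $\bsw^{\perp}$ (the degenerate set is measure zero and the lemma extends by continuity), solving the coupled linear system yields the closed form
\[
\bsx^{\perp} = -\wt A\bigl(\beta\bst + \alpha\wt T\bst\bigr), \qquad \bsy^{\perp} = \wt A\bigl(\alpha\bst - \beta\wt T\bst\bigr), \qquad \wt A := (I+\wt T^{2})^{-1}.
\]

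Setting $\bsU := \wt A\bst$, $\bsV := \wt T\bsU$, and $\gamma^{2} := \alpha^{2}+\beta^{2}$, a short computation using the orthogonal identity $\Delta = \|\alpha\bsy^{\perp} - \beta\bsx^{\perp}\|^{2} + \|\bsx^{\perp}\wedge\bsy^{\perp}\|^{2}$ gives the clean expression
\[
\Delta \;=\; \gamma^{4}\bigl(\|\bsU\|^{2} + \|\bsU\wedge\bsV\|^{2}\bigr),
\]
while $\|\bsv\|^{2} = 1$ forces $\gamma^{-2} = 1 + \|\bsU\|^{2} + \|\bsV\|^{2}$. Inserting the a priori bounds $\|\wt T\| \le \|J^{(1)}Y\|/\lambda_{1}(B)$, $\|\bst\| \ge \|J^{(1)}Y\bsw\|/\|B\|$ (from $\wt B\bst = J^{(1)}Y\bsw$), and $\|\bsU\| \ge \|\bst\|/(1+\|\wt T\|^{2})$, the first term $\gamma^{4}\|\bsU\|^{2} = \|\bsU\|^{2}/(1+\|\bsU\|^{2}+\|\bsV\|^{2})^{2}$ can be bounded below by a quantity of the target order $\lambda_{1}(B)^{2}\|J^{(1)}Y\bsw\|^{2}/(\|B\|+\|J^{(1)}Y\|)^{4}$ whenever $\|\bsU\|$ stays near its lower bound.

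The main obstacle is the near-resonant regime where $I + \wt T^{2}$ has small singular values, i.e.\ $\wt T$ carries eigenvalues close to $\pm\ii$ and acts approximately as a $90^{\circ}$ rotation on a two-dimensional invariant subspace. Here $\|\bsU\|$ can blow up and $\gamma^{4}\|\bsU\|^{2}$ becomes too small; the rescue is the second term $\gamma^{4}\|\bsU\wedge\bsV\|^{2}$, which stays of order one because $\bsU$ and $\wt T\bsU$ are nearly orthogonal with comparable norms, so $\|\bsU\wedge\bsV\|^{2} \asymp \|\bsU\|^{2}\|\bsV\|^{2}$, while $\gamma^{-2} \asymp 2\|\bsU\|^{2}$. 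A case split according to whether $\|\bsU\|$ is below or above a threshold determined by $\|B\|,\|J^{(1)}Y\|,\lambda_{1}(B)$—using the first term of $\Delta$ in one regime and the second in the other—produces the stated constant $1/5$, which reflects the worst-case trade-off between the two regimes.
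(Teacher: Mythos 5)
Your algebraic reductions through the identity $\Delta = \gamma^4\bigl(\|\bsU\|^2 + \|\bsU\wedge\bsV\|^2\bigr)$ with $\gamma^{-2}=1+\|\bsU\|^2+\|\bsV\|^2$ are correct, and the route is genuinely different from the paper's (which takes real parts of $\e{\ii\theta}J^{(1)}(Y+\ii B)\bsv=0$ to get the linear relation $\lambda_1(B)\|P_{\bsw^\perp}\im[\e{\ii\theta}\bsv]\|\le\|J^{(1)}Y\|\,\|\re[\e{\ii\theta}\bsv]\|$ directly, then lower-bounds $1-\absv{\bsw\adj\bsv}^2$ via a Schur-complement/resolvent argument). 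However, there is a genuine gap in your final step, and it is precisely the step your plan treats as the least precise.

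First, the heuristic justifying the resonant case is not true in general: for non-normal $\wt T$ the vectors $\bsU$ and $\bsV=\wt T\bsU$ need not have comparable norms. Take $\wt T=\begin{pmatrix}0&-K\\(1+\epsilon)/K&0\end{pmatrix}$ on a two-dimensional invariant subspace with $\bst$ along a coordinate axis; then $\|\bsV\|/\|\bsU\|\approx 1/K$ or $\approx K$, and the claimed $\gamma^{-2}\asymp 2\|\bsU\|^2$ fails. Second, and more seriously, the case split is asserted but not carried out, and completing it is not routine. The a priori bounds you insert ($\|\wt T\|\le\|J^{(1)}Y\|/\lambda_1(B)$, $\|\bsU\|\ge\|\bst\|/(1+\|\wt T\|^2)$, and the estimate on $\|P_{\bsU^\perp}\bsV\|$ coming from $(I+\wt T^2)\bsU=\bst$) produce, in the large-$\|\bsU\|$ regime, lower bounds on $\Delta$ containing high inverse powers of $\|\wt T\|$, which translate into high powers of $\lambda_1(B)$ in the numerator — far more than the $\lambda_1(B)^2$ the target allows when $\lambda_1(B)\ll\|J^{(1)}Y\|$. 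To rescue this one must use finer relations between the direction of $\bst$, the structure of $\wt T$, and the quantities $\|J^{(1)}Y\bsw\|, \|J^{(1)}Y\|, \|B\|$ (e.g.\ the norm of $J^{(1)}Y$ restricted to $\bsw^\perp$), and none of this is spelled out; in particular the claim that the worst case produces exactly the constant $1/5$ is unverified. The paper's proof sidesteps all of this: it never inverts $I+\wt T^2$, so no near-resonance analysis is needed, and the quadratic denominator $(\|J^{(1)}Y\|+\|B\|)^2$ at each of the two stages appears directly from a single operator-norm bound and a PSD comparison.
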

Note that zero is an eigenvalue of $B\adj (J^{(1)})\adj J^{(1)}B$ and that, by Cauchy interlacing theorem, its multiplicity is exactly one provided $\lambda_{1}(B)>0$. Hence $\bsw$ is uniquely determined if $\lambda_{1}(B)>0$.
We postpone the proof of Lemma \ref{lem:real_1} to the end of this section and move on to that of Theorem \ref{thm:s_real_weak}.
\begin{proof}[Proof of Theorem \ref{thm:s_real_weak}]
	In order to prove \eqref{eq:s_real_weak}, we follow the proof of Theorem \ref{thm:s_weak} in the complex case (hence $\beta=2$) for $j=k=1$. One can easily find that all the arguments until Lemma \ref{lem:tech_weak} remain intact, which is replaced by the following lemma;
	\begin{lem}\label{lem:tech_real_weak}
		Let $\bsb=(b_{i})_{i\in\bbrktt{N}}\in\R^{N}$ be a random vector with independent components whose densities are bounded by $\frb>0$. Then there exists a constant $\frC_{3}\equiv \frC_{3}(\frb)>0$ such that the following holds for all $c_{0}\in(0,1/2)$ and deterministic vectors $\bsv,\bsa\in\C^{N}$ with $\norm{\bsv}=1$;
		\beq
			\E(c_{0}+\absv{\bsv\adj(\bsb+\bsa)}^{2})^{-1}\leq \frC_{3}\left(1+\left(\min_{\theta\in[0,2\pi]}\norm{\re[\e{\ii\theta}\bsv]}\right)^{-1}\absv{\log c_{0}}\right).
		\eeq
	\end{lem}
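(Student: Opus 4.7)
The plan is to exploit the rotation invariance $|X|^{2}=|\e{\ii\theta}X|^{2}$ to reduce the problem to a two-dimensional integration whose Jacobian can be controlled by $\mu\deq\min_{\theta}\norm{\re[\e{\ii\theta}\bsv]}$. Set $X\deq \bsv\adj(\bsb+\bsa)$ and $d\deq \bsv\adj\bsa\in\C$. Since the continuous function $\theta\mapsto \norm{\re[\e{\ii\theta}\bsv]}^{2}$ attains its minimum at some $\theta_{*}\in[0,2\pi]$, the first-order optimality condition applied to the explicit formula
\[
\norm{\re[\e{\ii\theta}\bsv]}^{2}=\cos^{2}\theta\,\norm{\re[\bsv]}^{2}+\sin^{2}\theta\,\norm{\im[\bsv]}^{2}-2\sin\theta\cos\theta\,\re[\bsv]\tp\im[\bsv]
\]
yields after a short trigonometric computation that the rotated vector $\wt\bsv\deq\e{\ii\theta_{*}}\bsv$ satisfies $\re[\wt\bsv]\tp\im[\wt\bsv]=0$. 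Because $|\wt\bsv\adj(\bsb+\bsa)|^{2}=|X|^{2}$, replacing $\bsv$ by $\wt\bsv$ and $d$ by $\e{-\ii\theta_{*}}d$ does not affect the left-hand side of \eqref{eq:tech_weak}, so we may from the outset assume $\re[\bsv]\perp\im[\bsv]$ with $\norm{\re[\bsv]}=\mu$ and $\norm{\im[\bsv]}=\sqrt{1-\mu^{2}}\geq 1/\sqrt{2}$. The degenerate case $\mu=0$ is trivial since the right-hand side of \eqref{eq:tech_weak} is then infinite, so we henceforth assume $\mu>0$.

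In this normalized setup the real vectors $\bsu_{1}\deq\im[\bsv]/\sqrt{1-\mu^{2}}$ and $\bsu_{2}\deq\re[\bsv]/\mu$ are orthonormal, and I introduce $\xi\deq\bsu_{1}\tp\bsb$, $\eta\deq\bsu_{2}\tp\bsb$. Applying Lemma \ref{lem:dens_proj} to the rank-$2$ orthogonal projection onto $\spann(\bsu_{1},\bsu_{2})$ shows that the joint density of $(\xi,\eta)$ on $\R^{2}$ is bounded by $(C\frb)^{2}$ for an absolute constant $C$. Expanding $\bsv\adj = \mu\,\bsu_{2}\tp-\ii\sqrt{1-\mu^{2}}\,\bsu_{1}\tp$, we compute
\[
\re X=\mu\eta+\re d,\qquad \im X=-\sqrt{1-\mu^{2}}\,\xi+\im d,
\]
so the linear change of variables $(\xi,\eta)\mapsto(\re X,\im X)$ has Jacobian determinant of absolute value $\mu\sqrt{1-\mu^{2}}\geq\mu/\sqrt{2}$. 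Therefore the random vector $(\re X,\im X)\in\R^{2}$ admits a density on $\R^{2}$ bounded by $C\frb^{2}/\mu$.

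To conclude I split the expectation according to whether $|X|\leq 1$ or $|X|>1$. On the outer region $\{|X|>1\}$ we have $(c_{0}+|X|^{2})^{-1}\leq 1$, which contributes at most one since the density of $(\re X, \im X)$ has total mass one. On the unit disc I invoke the density bound and pass to polar coordinates to obtain
\[
\int_{|y|\leq 1}\frac{1}{c_{0}+|y|^{2}}\cdot\frac{C\frb^{2}}{\mu}\,\dd^{2}y=\frac{\pi C\frb^{2}}{\mu}\log\!\left(1+c_{0}^{-1}\right)\lesssim \frac{\frb^{2}}{\mu}\absv{\log c_{0}},
\]
using $c_{0}<1/2$ in the last step. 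Adding these two contributions gives the desired bound $\frC_{3}(1+\mu^{-1}\absv{\log c_{0}})$ for a suitable constant $\frC_{3}=\frC_{3}(\frb)$. The main conceptual step is the opening rotation: without it, one is forced to work with an arbitrary orthonormal basis of $\spann(\re[\bsv],\im[\bsv])$ and the Jacobian must be extracted from the less transparent identity $\mu^{2}=a_{2}^{2}b_{1}^{2}/(a_{1}^{2}+b_{1}^{2})$ obtained directly from minimizing the quadratic form; the rotation cleanly diagonalizes the problem and produces the clean decoupling of $\re X$ and $\im X$ above.
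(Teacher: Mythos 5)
Your proof is correct. It reaches the same bound as the paper but by a noticeably cleaner and more elementary route. The paper stays inside the $\caJ$-embedding machinery developed for Lemma~\ref{lem:tech_weak}: it forms the $2\times N$ real matrix $Q=\caJ[P_1U]\begin{pmatrix}I_N\\O\end{pmatrix}$, takes the SVD $Q=\sum_i m_i\bsx_i\bsy_i^\intercal$, observes via the variational characterisation that the small singular value equals $\min_\theta\norm{\re[\e{\ii\theta}\bsv]}$, and invokes Lemma~\ref{lem:s_R_C_R} for the other singular value (with a small labelling/constant slip, harmless to the conclusion). You instead observe directly that rotating $\bsv\mapsto\e{\ii\theta_*}\bsv$ at the minimiser orthogonalises $\re[\bsv]$ and $\im[\bsv]$ — exactly the diagonalisation that the SVD of $Q$ performs — and then the linear map $(\bsu_1^\intercal\bsb,\bsu_2^\intercal\bsb)\mapsto(\re X,\im X)$ has a diagonal, explicit Jacobian $\mu\sqrt{1-\mu^2}$. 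Both arguments rely on the same two ingredients (the projection density bound Lemma~\ref{lem:dens_proj} and a polar-coordinate integral), but your version avoids the $\caJ$ formalism and the matrices $R',\wt P',V',D'$ of \eqref{eq:def_R'} entirely, which makes the appearance of $\min_\theta\norm{\re[\e{\ii\theta}\bsv]}$ in the Jacobian transparent rather than coming out of the SVD. Two tiny remarks: you reference \eqref{eq:tech_weak}, which is the display label from Lemma~\ref{lem:tech_weak}, not the displayed inequality of the present lemma (which is unnumbered in the source); and it is worth noting explicitly that the minimality of $\theta_*$ forces $\mu^2\le1/2$, which you use when bounding $\sqrt{1-\mu^2}\ge1/\sqrt2$.
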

The proof of Lemma \ref{lem:tech_real_weak} is presented after completing that of Theorem \ref{thm:s_real_weak}, and we proceed assuming its validity. If we replace Lemma \ref{lem:tech_weak} by \ref{lem:tech_real_weak}, we obtain
	\beq\label{eq:s_real_weak_prf_0}
		\P[\Xi_{1}]\leq (N\eta)^{2}\max_{i\in\bbrktt{N}}\E(N\eta^{2}+\absv{w_{1}^{(\{i\})}}^{2})^{-1}
		\leq 10\frC_{3}(N\eta)^{2}\max_{i\in\bbrktt{N}}\left(\absv{\log\eta}\E\left(\min_{\theta\in[0,2\pi]}\norm{\re[ \e{\ii\theta}\bsv_{1}^{(\{i\})}]}\right)^{-1}\right),
	\eeq
	so that it only remains to estimate the expectation on the right-hand side. Also, as before, we take $i=1$ without loss of generality since the final result is uniform in $i\in\bbrktt{N}$. To this end, we apply Lemma \ref{lem:real_1} with the choices $Y=X+\Re A$ and $B=\Im A$. As a result, we obtain
	\beq\label{eq:s_real_weak_prf}\begin{aligned}
	\E\left(\min_{\theta\in[0,2\pi]}\norm{\re[ \e{\ii\theta}\bsv^{(1)}]}\right)^{-1}\leq &\frac{C}{\lambda_{1}(\Im A)}\E\frac{(\norm{J^{(1)}(X+\Re A)}+\norm{\Im A})^{2}}{\norm{J^{(1)}(X+\Re A)\bsw}}	\\
	\leq &\frac{C}{\lambda_{1}(\Im A)}\Norm{\norm{J^{(1)}(X+\Re A)\bsw}^{-1}}_{2}(\Norm{\norm{X}^{2}}_{2}+\norm{A}^{2})
	\end{aligned}\eeq
	for a numeric constant $C>0$, where $\bsw$ is the unit null vector of $J^{(1)}\Im A$.
	
	We next handle the first factor on the right-hand side of \eqref{eq:s_real_weak_prf}. We claim that $\E\norm{J^{(1)}(X+\Re A)\bsw}^{-r}\leq C$ as long as $N-1>r$. To this end, we estimate the lower tail of $\norm{J^{(1)}(X+\re A)\bsw}$ by its Laplace transform;
	\beq\begin{aligned}
		\P\left[\norm{J^{(1)}(X+\Re A)\bsw}^{2}\leq t^{2}\right]=&\P\left[-\frac{1}{t^{2}}\sum_{j=2}^{N}\left(\bse_{j}\tp(\sqrt{N}(X+\Re A))\bsw\right)^{2}\geq -N\right]	\\
		\leq &\e{N}\prod_{j=2}^{N}\E \exp\left(-\frac{1}{t^{2}}\left(\bse_{j}\tp(\sqrt{N}(X+\Re A))\bsw\right)^{2}\right).
	\end{aligned}\eeq
	Then for each $j\in\bbrktt{2,N}$, since $\norm{\bsw}=1$, the random variable $\bse_{j}\tp(\sqrt{N}X+\Re A)\bsw$ has a density bounded by $C\frb$ by Lemma \ref{lem:dens_proj}. This gives 
	\beq
		\P[\norm{J^{(1)}(X+\Re A)\bsw}\leq t]\leq (C_{0}t)^{N-1},
	\eeq
	for a constant $C_{0}$ depending only on $\frb$, which in turn implies
	\beq\begin{aligned}
		\E\norm{J^{(1)}(X+\Re A)\bse_{1}}^{-r}\leq& C_{0}^{r}+\E\norm{J^{(1)}(X+\Re A)\bse_{1}}^{-r}\lone_{\norm{J^{(1)}(X+\Re A)\bse_{1}}\leq C_{0}^{-1}}	\\
		=&C_{0}^{r}+\int_{C_{0}^{r}}^{\infty}\P[\norm{J^{(1)}(X+\Re A)\bse_{1}}\leq x^{-1/r}]\dd x
		\leq C_{0}^{r}+C_{0}^{r}\int_{1}^{\infty} x^{-(N-1)/r}\dd x.
	\end{aligned}\eeq
	Taking $r=2$ and recalling $N\geq 4>r+1$ we have
	\beq
		\Norm{\norm{J^{(1)}(X+\Re A)\bse_{1}}^{-1}}_{2}\leq C(\frb).
	\eeq
	Substituting this into \eqref{eq:s_real_weak_prf} and then \eqref{eq:s_real_weak_prf_0} proves \eqref{eq:s_real_weak}. Given \eqref{eq:s_real_weak}, \eqref{eq:s_real} follows directly from $\E\norm{X}^{4}\leq C(\frm)$, which is a classical result that can be proved following \cite{Bai-Krishnaiah-Yin1988}. 
\end{proof}
\begin{proof}[Proof of Lemma \ref{lem:tech_real_weak}]
	We reuse the same notations as in the proof of Lemma \ref{lem:tech_weak} for the real case. Namely, for a suitable unitary matrix $U$ we may write $\bsv=U\adj P_{1}\adj$ with $P_{1}$ defined in \eqref{eq:def_P}, so that
	\beq
		\begin{pmatrix} \re[\bsv\adj\bsb]\\ \im[\bsv\adj\bsb]\end{pmatrix}=\caJ[P_{1}U\bsb]=Q\bsb,
	\eeq
	with the same $Q$ as in Lemma \ref{lem:s_R_C_R} for $k=1$. Again we write the same singular value decomposition of $Q$ as in \eqref{eq:Q_decomp}, but here we replace the matrices in \eqref{eq:def_R} by the following, that use both $m_{1}$ and $m_{2}$:
	\beq\label{eq:def_R'}\begin{aligned}
		&R'\deq\sum_{i=1,2}\bsy_{i}\bsx_{i}\tp,&
		&\wt{P}'\deq\sum_{i=1,2}\bsy_{i}\bsy_{i}\tp,&
		&V'\deq \sum_{i=1,2}\bse_{i}\bsy_{i}\tp,
		&D'\deq \diag(m_{1},m_{2}).
	\end{aligned}\eeq
	Now we may follow similar lines as in the proof of Lemma \ref{lem:tech_weak}: We define $\wt{\bsa}'$ by the exact same formula as in \eqref{eq:def_a}, but with matrices in \eqref{eq:def_R} replaced by those in \eqref{eq:def_R'} so that $\wt{\bsa}'\in\spann(\bsy_{1},\bsy_{2})$. Then, denoting the distribution of $V'(\wt{P}'\bsb+\wt{\bsa}')$ by $\nu'_{1}$, we have
	\beq\label{eq:tech_weak_real_prf}\begin{aligned}
		\E(c_{0}+\absv{\bsv\adj(\bsb+\bsa)}^{2})^{-1}=&\E(c_{0}+\norm{P_{1}U(\bsb+\bsa)}^{2})^{-1}\leq \E(c_{0}+\norm{D'V'(\wt{P}'\bsb+\wt{\bsa}')}^{2})^{-1}	\\
		=&\int_{\R^{2}}(c_{0}+\norm{D'\bsx}^{2})^{-1}\dd\nu'_{1}(\bsx)\leq 1+C\frac{\norm{\nu'_{1}}_{\infty}}{\det D'}\absv{\log c_{0}}.
	\end{aligned}\eeq
	By the exact same argument as in the proof of Lemma \ref{lem:tech_weak} we have $\norm{\nu'_{1}}_{\infty}\leq C(\frb)$, thus it only remains to estimate $(\det D')^{-1}=(m_{1}m_{2})^{-1}$.
	
	Applying Lemma \ref{lem:s_R_C_R} we immediately have $m_{1}\geq 1/\sqrt{2}$, hence it suffices to estimate $m_{2}$ from below. Writing out the definition of $Q$ in this case, we have
	\beq
		Q=\begin{pmatrix}
			\re[\bsv]\tp \\ -\im[\bsv]\tp
		\end{pmatrix}.
	\eeq
	Then we immediately find
	\beq
	m_{2}=\min_{\bss\in\R^{2},\norm{\bss}=1}\norm{Q\tp \bss}=\min_{\theta\in[0,2\pi]}\Norm{\begin{pmatrix}
			\re[\bsv]& -\im[\bsv]
		\end{pmatrix}\begin{pmatrix}\cos \theta \\\sin\theta\end{pmatrix}}
	=\min_{\theta\in[0,2\pi]}\norm{\re[\e{\ii\theta}\bsv]}.
	\eeq
	We thus obtain
	\beq
		\det D=m_{1}m_{2}\geq \sqrt{2}\min_{\theta\in[0,2\pi]}\norm{\re[\e{\ii\theta}\bsv]},
	\eeq
	and plugging this into \eqref{eq:tech_weak_real_prf} concludes the proof of Lemma \ref{lem:tech_real_weak}.
\end{proof}

\begin{proof}[Proof of Lemma \ref{lem:real_1}]
	Throughout the proof, we abbreviate $J\deq J^{(1)}$ and $\lambda\deq \lambda_{1}(B)$ for simplicity.
	Since $\bsv$ is a null vector of $J(Y+\ii B)$,  we get from taking the real part of $\e{\ii\theta}J(Y+\ii B)\bsv=0$ that
	\begin{align}\label{eq:real_v0}
		0=JX\re[\e{\ii\theta}\bsv]-JB\im [\e{\ii\theta}\bsv]=JX\re[\e{\ii\theta}\bsv]-JBP_{\bsw^{\perp}}\im[\e{\ii\theta}\bsv]
	\end{align}
	for any $\theta\in\R$, where $P_{\bsw^{\perp}}\in\R^{N\times N}$ is the projection onto the orthogonal complement of $\bsw$. Then, writing the smallest nonzero singular value of $JB$ by $\lambda_{1}(JB)$, \eqref{eq:real_v0} implies 
	\beq\label{eq:real_1_crude}
	\begin{aligned}
	\lambda^{2}(\norm{\im[\e{\ii\theta}\bsv]}^{2}-\absv{\im[\e{\ii\theta}\bsw\tp\bsv]}^{2})=&\lambda^{2}\norm{P_{\bsw^{\perp}}\im[\e{\ii\theta}\bsv]}^{2}	
	\leq \lambda_{1}(JB)\norm{P_{\bsw^{\perp}}\im[\e{\ii\theta}\bsv]}^{2}	\\
	\leq&  \norm{JBP_{\bsw^{\perp}}\im[\e{\ii\theta}\bsv]}^{2}\leq \norm{JY}^{2}\norm{\re[\e{\ii\theta}\bsv]}^{2},
	\end{aligned}\eeq
	where we used $\lambda\leq \lambda_{1}(JB)$ from Cauchy interlacing theorem in the first inequality. This in turn gives
	\beq\label{eq:real_1_prf}
	\lambda^{2}(1-\absv{\bsw\adj\bsv}^{2})	
	\leq (\norm{JY}^{2}+\lambda^{2})\norm{\re[\e{\ii\theta}\bsv]}^{2}\leq (\norm{JY}+\norm{B})^{2}\norm{\re[\e{\ii\theta}\bsv]}^{2}.
	\eeq
	Comparing \eqref{eq:real_1} with \eqref{eq:real_1_prf} and noticing the first factor on the right-hand side of \eqref{eq:real_1_prf}, since $\theta$ was arbitrary, in order to prove \eqref{eq:real_1} it only remains to show
	\beq\label{eq:v01}
	1-\absv{\bsw\adj\bsv}^{2}\geq\frac{1}{5}\frac{\norm{JY\bsw}^{2}}{(\norm{JY}+\norm{B})^{2}}.
	\eeq
	
	Next, we prove \eqref{eq:v01}. First of all, since $\bsv$ is a null vector of $J(Y+\ii B)$, we have the following inequality of positive semi-definite matrices for all $\wt{\eta}>0$;
	\beq
		\bsv\bsv\adj\leq \frac{\wt{\eta}^{2}}{\wt{\eta}^{2}+(Y+\ii B)\adj J\adj J(Y+\ii B)}.
	\eeq
	We then take $\wt{U}\in\R^{N\times N}$ to be a real orthogonal matrix with $\wt{U}\bse_{1}=\bsw$, so that for all $\wt{\eta}>0$
	\beq\label{eq:v01_G1}\begin{aligned}
		\absv{\bsw\adj\bsv}^{2}=\absv{\bse_{1}\adj \wt{U}\adj\bsv}^{2}\leq \bse_{1}\adj \frac{\wt{\eta}^{2}}{\wt{\eta}^{2}+\wt{U}\adj(Y+\ii B)\adj J\adj J(Y+\ii B)\wt{U}}\bse_{1}
		=\wt{\eta} \im[\wt{G}^{(1)}(\ii\wt{\eta})]_{N+1,N+1}
	\end{aligned}\eeq	
	where in the last equality we used \eqref{eq:Schur_G22} and defined 
	\beq
		\wt{G}^{(1)}(\ii\wt{\eta})\deq(\wt{H}^{(1)}-\ii\wt{\eta})^{-1},\qquad \wt{H}^{(1)}\deq \begin{pmatrix}
			0 & J(Y+\ii B)\wt{U} \\
			\wt{U}\adj(Y+\ii B)\adj J\adj & 0
		\end{pmatrix}\in\C^{\bbrktt{2,2N}\times\bbrktt{2,2N}}.
	\eeq
	On the other hand, we use Schur complement formula to get
	\begin{align}
		-\frac{1}{[\wt{G}^{(1)}(\ii\wt{\eta})]_{N+1,N+1}}=\ii\wt{\eta}+\sum_{i,j\in\bbrktt{2,N}\cup\bbrktt{N+2,2N}}\wt{H}^{(1)}_{N+1,i}[(\wt{H}^{(1,1)}-\ii\wt{\eta})^{-1}]_{ij}\wt{H}^{(1)}_{j,N+1},
	\end{align}
	where $\wt{H}^{(1,1)}$ is the matrix obtain from $\wt{H}^{(1)}$ by deleting the $(N+1)$-th rows and columns. Equivalently, $\wt{H}^{(1,1)}$ is the Hermitization of $J(Y+\ii B)\wt{U}J\adj\in\C^{\bbrktt{2,N}^{2}}$, that is,
	\beq
	\wt{H}^{(1,1)}\deq\begin{pmatrix}
		0 & J(Y+\ii B)\wt{U}J\adj \\ J\wt{U}\adj(Y+\ii B)\adj J\adj & 0
	\end{pmatrix}\in\C^{(\bbrktt{2,N}\cup\bbrktt{N+2,2N})^{2}}.
	\eeq
	Thus, by the fact that $(\wt{G}^{(1)}(\ii\wt{\eta}))_{N+1,N+1}\in\ii\R$, the definition of $\wt{H}^{(1)}$, and another application of Schur complement formula, we have
	\begin{align}
		\frac{1}{\wt{\eta}\im[\wt{G}^{(1)}(\ii\wt{\eta})]_{N+1,N+1}}=&1+\frac{1}{\wt{\eta}}\bse_{1}\adj\wt{U}\adj(Y+\ii B)\adj J\adj\left(\frac{\wt{\eta}}{J(Y+\ii B)\wt{U} J\adj J\wt{U}\adj(Y+\ii B)\adj J\adj+\wt{\eta}^{2}}\right)J(Y+\ii B)\wt{U}\bse_{1}	\nonumber\\
		\geq&1+\frac{\norm{J(Y+\ii B)\bsw}^{2}}{\norm{J(Y+\ii B)\wt{U}J\adj}^{2}+\wt{\eta}^{2}}
		\geq 1+\frac{\norm{JY\bsw}^{2}}{\norm{J(Y+\ii B)}^{2}+\wt{\eta}^{2}}, \label{eq:G1_G11}
	\end{align}
	where in the second line we used $\wt{U}\bse_{1}=\bsw$, $JB\bsw=0$, and the positive semi-definiteness of
	\beq
		\frac{1}{ZZ\adj+1}-\frac{1}{\norm{Z}^{2}+1}\geq 0
	\eeq
	which is true for any matrix $Z$. We then combine \eqref{eq:v01_G1} and \eqref{eq:G1_G11} to get
	\beq\label{eq:v01_G11}
	1-\absv{\bse_{1}\adj\bsv}^{2}\geq 1-\wt{\eta}\im[\wt{G}^{(1)}(\ii\wt{\eta})]_{N+1,N+1}\geq 1-\left(1+\frac{\norm{JY\bsw}^{2}}{\norm{J(Y+\ii B)}^{2}+\wt{\eta}^{2}}\right)^{-1}.
	\eeq
	Finally, since $\wt{\eta}$ can be arbitrary, we take the limit $\wt{\eta}\searrow 0$ in \eqref{eq:v01_G11} to obtain
	\beq
	1-\absv{\bse_{1}\adj\bsv}^{2}\geq\frac{\norm{JY\bsw}^{2}}{\norm{JY\bsw}^{2}+\norm{J(Y+\ii B)}^{2}}\geq \frac{1}{5}\frac{\norm{JY\bsw}^{2}}{(\norm{JY}+\norm{B})^{2}}.
	\eeq
	This completes the proof of Lemma \ref{lem:real_1}.
\end{proof}

\begin{rem}\label{rem:real_1}
	As pointed out in Remark \ref{rem:s_R_C}, the suboptimality in \eqref{eq:s_real} is due to that of Lemma \ref{lem:real_1}. In particular the inequality \eqref{eq:real_1_crude} is far from being optimal: For Gaussian $X$ and $A=-z$, so that $Y=X-\re z$ and $B=-\im z$, numerical experiments show that $\re[\bsv]$ and $\im[\bsv]$ have almost equal size when $\absv{\im z}\gtrsim N^{-1/2}$. In this case, we believe that the typical size of the right-hand side of \eqref{eq:real_1} is $\left(1\wedge N\absv{\im z}^{2}\right)$ up to a positive random variable of size $O(1)$. Nonetheless, we do not know whether the first negative moment of this random variable is finite which is crucial for our proof; see \eqref{eq:s_real_weak_prf}
\end{rem}

\appendix
\section{Generalized domain for Theorem \ref{thm:overlap}}\label{append:Minkowski}
The goal of this section is to prove that we may replace the square $\caD$ in Theorem \ref{thm:overlap} by a general Borel set $\caD_{0}$, instead of a square, under the condition that
\beq\label{eq:ovlp_general}
	\absv{\caD_{0}+[-N^{-K},N^{-K}]^{2}}\leq C\absv{\caD_{0}}
\eeq
for some constants $C,K>0$, where we recall that $\absv{\caD}$ is the Lebesgue measure of any planar domain $\absv{\caD}$.

For simplicity, we write $x\deq N^{-K}$ and $\caS\deq[-x,x]^{2}$. First, note that it suffices to prove that Theorem~\ref{thm:overlap} is true with the choice $\caD=\caA+\caS$ for any Borel set $\caA\subset\C$. Suppose this has been done, then for a given $\caD_{0}$ satisfying \eqref{eq:ovlp_general}, we define $\Xi_{\caD_{0}}\deq\Xi_{\caD_{0}+\caS}$ and write
\beqs\begin{aligned}
	\E\lone_{\Xi_{\caD_{0}}}\sum_{i:\sigma_{i}\in\caD_{0}}\caO_{ii}
	\leq	 \E\lone_{\Xi_{\caD_{0}+\caS}}\sum_{i:\sigma_{i}\in\caD_{0}+\caS}\caO_{ii}	
	\lesssim N^{1+\xi}(N\absv{\caD_{0}+\caS})\lesssim N^{1+\xi}(N\absv{\caD_{0}}).
\end{aligned}\eeqs
This proves Theorem \ref{thm:overlap} for $\caD=\caD_{0}$.

In order to prove the result for $\caD=\caA+\caS$, we prove that this Minkowski sum can be covered by non-intersecting copies of $\caS$, so that the total area is comparable to $\absv{\caA+\caS}$.
	\begin{lem}\label{lem:lattice_minkowski}
		For any Borel measurable $\caA\subset\C$,
		the discrete set $\caL\deq (2x\Z)^{2}\cap(\caA+2\caS)$ satisfies 
		\beq\label{eq:lattice_minkowski}
			\caA+S\subset \caL+\caS \AND \absv{\caL+\caS}\leq 3\absv{\caA+\caS}.
		\eeq
	\end{lem}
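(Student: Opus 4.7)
The plan is to verify the two assertions in \eqref{eq:lattice_minkowski} separately.

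For the first inclusion, I fix $z \in \caA + \caS$. Since the lattice $(2x\Z)^2$ has mesh $2x$, I can choose a lattice point $\ell \in (2x\Z)^2$ with $\norm{z - \ell}_\infty \leq x$, which is equivalent to $z \in \ell + \caS$. Using that $\caS = -\caS$,
\[
\ell \;=\; z + (\ell - z) \;\in\; (\caA + \caS) + \caS \;=\; \caA + 2\caS,
\]
so $\ell \in (2x\Z)^2 \cap (\caA + 2\caS) = \caL$, and hence $z \in \ell + \caS \subset \caL + \caS$.

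For the measure bound, the starting point is the essential disjointness of the translates $\{\ell + \caS\}_{\ell \in \caL}$: distinct points of $(2x\Z)^2$ differ by at least $2x$ in some coordinate, so the interiors of $\ell + \caS$ and $\ell' + \caS$ do not meet for $\ell \neq \ell'$. Consequently,
\[
 |\caL + \caS| \;=\; |\caL|\cdot |\caS| \;=\; 4x^2\,|\caL|,
\]
and the estimate reduces to controlling $|\caL|$ in terms of $|\caA + \caS|$. Since $\ell \in \caA + 2\caS$ for each $\ell \in \caL$, the crude Minkowski inclusion $\caL + \caS \subset \caA + 3\caS$ gives at best $|\caL + \caS| \leq |\caA + 3\caS|$. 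To sharpen this to the claimed factor $3$, I would decompose $\caL$ along one coordinate axis, writing $\caL = \bigsqcup_{k \in \Z} \bigl(\caL \cap (\R\times\{2kx\})\bigr)$, group the horizontal slices in consecutive triples, and apply a one-dimensional counting argument inside each triple that exploits the fact that $\caS$ is a square aligned with the lattice: a triple of adjacent lattice rows contributes a band of vertical thickness $6x = 3\cdot 2x$, which can be packed essentially disjointly into $\caA + \caS$ at a cost of a factor at most $3$.

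The main obstacle is precisely this last packing step, where the constant $3$ (rather than the trivial $9$) must be extracted via a careful geometric argument using that $\caS$ and the lattice have the same axis directions. Once this slice estimate is established, assembling the contributions of all triples and invoking the essential disjointness of $\{\ell + \caS\}_{\ell \in \caL}$ yields $4x^2 \, |\caL| \leq 3|\caA + \caS|$, completing the proof of \eqref{eq:lattice_minkowski}.
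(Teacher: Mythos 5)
Your first inclusion is exactly the paper's argument and is correct. The trouble is in the measure bound, and the gap you flag is not merely hard to fill but impossible, because the claimed constant $3$ is in fact false; interestingly, the paper's own proof makes the same error. The paper's entire argument for the second assertion is the ``crude'' bound you discard, namely
\beqs
\absv{\caL+\caS}\leq\absv{\caA+2\caS+\caS}=\absv{\caA+3\caS}\leq 3\absv{\caA+\caS},
\eeqs
where the final step is asserted without justification. That step is a one-dimensional identity (an interval $3\caS$ is covered by $3$ translates of $\caS$) carried incorrectly into two dimensions, where $3\caS=[-3x,3x]^{2}$ requires $9$ translates of $\caS$, namely by $(2jx,2kx)$ with $j,k\in\{-1,0,1\}$; this gives only $\absv{\caA+3\caS}\leq 9\absv{\caA+\caS}$. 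The factor $9$ is sharp: take $\caA=\{0\}$, so that $\caA+2\caS=[-2x,2x]^{2}$, $\caL=\{-2x,0,2x\}^{2}$, and $\caL+\caS=[-3x,3x]^{2}$, whence $\absv{\caL+\caS}=36x^{2}=9\absv{\caA+\caS}$. Your planned horizontal slicing in consecutive triples therefore cannot succeed and should be abandoned; the lemma should simply be stated with constant $9$, which is harmless for the application in Appendix~\ref{append:Minkowski} since the constant is absorbed by the preceding $\lesssim$. (Your preliminary observation that the squares $\ell+\caS$, $\ell\in\caL$, have essentially disjoint interiors, so $\absv{\caL+\caS}=4x^{2}\absv{\caL}$, is correct but not needed once the Minkowski inclusion is used directly.)
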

	\begin{proof}
		Define $\absv{z}_{\infty}=\absv{\re z}\vee\absv{\im z}$ for $z\in\C$. To prove the first assertion, take a point $z\in\caA+\caS$. Then we have a point $w\in(2x\Z)^{2}$ and $z_{0}\in\caA$ such that $\absv{z-w}_{\infty}\leq x$ and $\absv{z_{0}-z}_{\infty}\leq x$. Thus we immediately have $\absv{w-z_{0}}_{\infty}\leq 2x$ so that $w\in(2x\Z)^{2}\cap(z_{0}+2\caS)\subset\caL$. Thus $z\in w+\caS\subset\caL+\caS$.
		
		The second assertion follows from $\caL\subset\caA+2\caS$ since
		\beqs
		\absv{\caL+\caS}\leq \absv{\caA+2\caS+\caS}=\absv{\caA+3\caS}\leq 3\absv{\caA+\caS}.
		\eeqs
		This finishes the proof.
	\end{proof}

	Next, we prove that Theorem \ref{thm:overlap} is true for $\caD=\caA+\caS$ with any Borel set $\caA$. Let $\caL_{0}\deq\caL\cap[-2,2]^{2}$ where $\caL$ is the discrete set given by Lemma \ref{lem:lattice_minkowski}. Apply Theorem \ref{thm:overlap} for each square $z+\caS$ with $z\in\caL_{0}$, so that $\P[\Xi_{z+\caS}^{c}]\leq N^{-2K-D-1}$ and \eqref{eq:overlap_strong} holds true with the choice $\caD=z+\caS$.
	
	Define $\Xi_{\caA+\caS}\deq\bigcap_{z\in\caL_{0}}\Xi_{z+\caS}\cap\Xi_{0}$ where $\Xi_{0}$ is the event $[\norm{X}\leq 3]$. Then we have $\P[\Xi_{\caA+\caS}^{c}]\leq N^{-D}$ from $\absv{\caL_{0}}=O(N^{2K})$ and $\P[\Xi_{0}^{c}]\leq N^{-D-1}$. Note that on the event $\Xi_{0}$ there is no eigenvalue in the rightmost side, hence all, of
	\beqs
		(\caA+\caS)\setminus(\caL_{0}+\caS)\subset(\caL\setminus\caL_{0})+\caS=(\caL\cap([-3,3]^{2})^{c})+\caS,
	\eeqs
	where we used the first inequality of \eqref{eq:lattice_minkowski}. Then it follows that
	\beqs\begin{aligned}
	\E\lone_{\Xi_{\caA+\caS}}\sum_{i:\sigma_{i}\in\caA+\caS}\caO_{ii}
	\leq&\sum_{z\in\caL_{0}} \E\lone_{\Xi_{z+\caS}}\sum_{i:\sigma_{i}\in(z+\caS)}\caO_{ii}	
	\lesssim
	N^{1+\xi}(N\sum_{z\in\caL_{0}}\absv{z+\caS})
	=N^{1+\xi}(N\absv{\caL_{0}+\caS_{}})\leq 3N^{1+\xi}(N\absv{\caD}),
	\end{aligned}\eeqs
	where in the last step we used the second inequality of \eqref{lem:lattice_minkowski}. This proves that we may take $\caD=\caA+\caS$, and hence $\caD_{0}$ satisfying \eqref{eq:ovlp_general}, in Theorem \ref{thm:overlap}.

\end{document}